\documentclass{amsart}

\usepackage[a4paper,margin=3.cm]{geometry}
\usepackage[dvipsnames]{xcolor}

\usepackage{amsopn}

\newtheorem{assumption}{Assumption}

\usepackage{comment}
\usepackage{hyperref}

\newcommand{\RE}{\mathbb{R}}
\newcommand{\R}{\mathbb{R}}
\newcommand{\XX}{\mathbb{X}}
\newcommand{\YY}{\mathbb{Y}}

\newcommand{\bx}{\mathbf{x}}
\newcommand{\by}{\mathbf{y}}
\newcommand{\bs}{\mathbf{s}}
\newcommand{\CS}{\mathcal{S}}
\newcommand{\CF}{\mathcal{F}}
\newcommand{\CG}{\mathcal{G}}

\newcommand{\bZ}{\mathbf{Z}}
\newcommand{\bz}{\mathbf{z}}

\newcommand{\bY}{\mathbf{Y}}
\newcommand{\PP}{\mathbb{P}}
\newcommand{\CN}{\mathcal{N}}

\newcommand{\CI}{\mathcal{I}}
\newcommand{\CM}{\mathcal{M}}
\newcommand{\CB}{\mathcal{B}}

\newcommand{\CD}{\mathcal{D}}
\newcommand{\CW}{\mathcal{W}}
\newcommand{\E}{\mathbf{E}}

\DeclareMathOperator{\tr}{tr}

\DeclareMathOperator{\Cov}{Cov}
\newcommand{\psd}[1]{\mathcal{S}^{+}_{#1}}
\newcommand{\iid}{\stackrel{\text{iid}}{\sim}}
\newcommand{\id}{\mathbf{I}}
\newcommand{\convp}{\xrightarrow{\PP}}
\newcommand{\convd}{\xrightarrow{\CL}}

\newcommand{\eps}{\epsilon}
\newcommand{\CL}{\mathcal{L}}
\newcommand{\CK}{\mathcal{K}}

\newcommand{\sym}[1]{\mathcal{S}_{#1}}
\def\lnorm{\lVert}
\def\rnorm{\rVert}
\newcommand{\veec}{{\bf vec}}

\newtheorem{definition}{Definition}[section]
\newtheorem{lemma}[definition]{Lemma}
\newtheorem{proposition}[definition]{Proposition}

\newtheorem{remark}{Remark}

\newtheorem{example}{Example}
\newtheorem{theorem}[definition]{Theorem}

\title[Convolutional NN]{Large deviation principles for convolutional \\ Bayesian neural networks } 
\author{ Federico Bassetti$^*$ ,  Vassili De Palma$^\dag$
 ,Lucia Ladelli$^\ddag$
}
\thanks{$^*$ email: federico.bassetti@polimi.it; address:  Dipartimento di Matematica,  Politecnico di Milano,  Milano, Italy.}
\thanks{$^\dag $  email:  vdepalma@ethz.ch; address: Departement Informatik, ETH Z\"urich, Switzerland}
\thanks{$^\ddag$  email: lucia.ladelli@polimi.it;  address: Dipartimento di Matematica,  Politecnico di Milano,  Milano, Italy.}

\begin{document}

\maketitle

\begin{abstract}
While suitably scaled CNNs with Gaussian initialization are known to converge to Gaussian processes as the number of channels diverges, little is known beyond this Gaussian limit. We establish a large deviation principle (LDP) for convolutional neural networks in the infinite-channel regime.
We consider a broad class of multidimensional CNN architectures characterized by general receptive fields encoded through a patch-extractor function satisfying mild structural assumptions. Our main result establishes a large deviation principle for the sequence of conditional covariance matrices under a Gaussian prior distribution on the weights. We further derive an LDP for the posterior distribution obtained by conditioning on a finite number of observations. In addition, we provide a streamlined proof of the concentration of the conditional covariances and of the Gaussian equivalence of the network.
To the best of our knowledge, this is the first large deviation principle established for convolutional neural networks.%
\end{abstract}

%

%

\section{Introduction}

Convolutional Neural Networks (CNNs) are a prominent class of deep neural networks specifically designed to process data with an underlying grid-like structure. Typical examples include image data, which can be naturally represented as a two-dimensional lattice of pixels. The defining feature of CNNs is the use of convolutional layers, which extract local features through weight sharing and localized receptive fields. This architectural property has led to remarkable empirical success across a wide range of applications.

Despite their widespread use, the theoretical understanding of CNNs—especially in asymptotic regimes—remains significantly less developed than that of fully connected neural networks (FCNNs). In the fully connected setting, a substantial body of work has established that, under suitable scaling and Gaussian initialization of the weights, wide neural networks exhibit a form of \emph{Gaussian equivalence}. In particular, as the width tends to infinity, the network output converges in distribution to a Gaussian process; see, among others, \cite{g.2018gaussian,Hanin2023,lee2018deep}.

At finite width, the output of a Gaussian FCNN can be described as a mixture of Gaussian random variables with a random covariance structure. Under appropriate scaling, the associated sequence of random covariance matrices converges to a deterministic limit, yielding convergence of the network output to a Gaussian process. Beyond weak convergence, recent works have developed quantitative refinements of this limit, including quantitative central limit theorems, e.g. \cite{CePe25,favaro2023quantitative}, as well as moderate and large deviation principles for both the output and the conditional covariance function; see \cite{ABH2026,MacciTrevisanetal,MaPaTo24,Vog24}. 
At various levels of rigor, more complex asymptotic regimes—where also the number of layers or the number of observations diverges—have been investigated; see, among others, 
\cite{aiudi2023,baglioni2024predictive,BaLaRo,hanin2024,Mufan2023,pacelli2023statistical}.

For CNNs, analogous results are considerably more limited. Deep convolutional networks with many channels and Gaussian initialization have been shown to converge to Gaussian processes under suitable scaling; see, e.g., \cite{Rasmussen2019,novak2019bayesian,Yang2019TensorI,yang2020feature}. However, little is known beyond the Gaussian limit itself.

The aim of the present paper is to develop {\it a large deviation theory for convolutional neural networks in the infinite-channel limit}. We consider a broad class of CNN architectures characterized by general receptive fields, described via a patch-extractor function satisfying mild and easily verifiable assumptions that cover most architectures used in practice.

Our main contributions are as follows:
\begin{enumerate}
    \item A large deviation principle (LDP) for the sequence of covariance functions under gaussian prior distribution on the weights (\textbf{Theorem~\ref{prop:gs_ldp0}}).
    \item A LDP for the posterior distribution induced by conditioning on a finite number of observations (\textbf{Proposition~\ref{Prop:posterior}}).
    \item A LDP for the rescaled network output (\textbf{Proposition~\ref{prop:rescalednet}}).
    \item A law of large numbers for the conditional covariances and a central limit theorem for the network output (\textbf{Theorems~\ref{prop:cnn_lln} and \ref{CLT}}).
\end{enumerate}

To the best of our knowledge, this work provides the first large deviation principle for convolutional neural networks.

Concerning point (4), our results are not completely new, even if 
compared with the central limit theorem established in \cite{Rasmussen2019,novak2019bayesian}, our results go beyond the simplified setting of one-dimensional circular-padding architectures and extend to multidimensional CNNs with general receptive fields. Moreover, compared to the general framework of \cite{Yang2019TensorI,yang2020feature}, our approach yields a streamlined proof of the concentration of conditional covariances and the Gaussian equivalence of the network.

The rest of the paper is organized as follows. 
Section \ref{Sec:def} introduces the setting of the problem, presents a few examples, and defines the conditional covariance function. 
Section \ref{Sec:MainRes} states the main results of the paper, in particular the various large deviation principles. 
Section \ref{Sec:generalstruc} reformulates the problem in a slightly more abstract framework, tailored to provide streamlined proofs of the main results. 
Finally, Section \ref{Sec:proofs} and Appendix contain the proofs.

\section{Convolutional Neural Networks: setting of the problem}\label{Sec:def}
We follow the so-called complex layer terminology of \cite{Goodfellow-et-al-2016}, 
where a Convolutional Neural Network (CNN) is described as a small number of relatively complex layers, each composed of several stages. 
Typically, a layer consists of a nonlinear activation (detector stage), a pooling stage, and a convolution stage, with only the latter involving trainable parameters.

In this setting, both the input data and the pre-activations at layer $\ell$ are indexed by a \emph{channel index} $c=1,\dots,C_\ell$ and a \emph{spatial index} $i\in\Lambda_\ell$. 
Each channel represents features computed from a localized region (the \emph{receptive field}) of the previous layer, obtained by applying a discrete convolution. 
This operation uses a set of shared trainable weights (a \emph{convolution mask}) that is applied across all spatial locations, producing weighted sums over local neighborhoods and ensuring spatially consistent feature extraction.

In order to formally define a CNN we need some more notation. 
Although in D-dimensional convolutional neural networks 
the spatial index $i$ is a D-dimensional index $i=(i_1,\dots,i_D)$ 
taking values in a finite subset $\Lambda_\ell \subset \mathbb{Z}^D$, 
for simplicity, and without real loss of generality, 
we label the indices in $\Lambda_\ell$ as $\{1,\dots,N_\ell\}$.
In what follows,
\begin{itemize}
\item $L$ denotes the number of {\it hidden layers};
\item $C_0,C_1,\dots, C_{L+1}$ denote the number of {\it channels} of each layer;
\item $N_0,N_1,\dots,N_{L+1}$ denote the {\it spatial dimension} of each layer;
\item $\CM_\ell$ is the  set of indices (or multi-indices) labeling  the {\it receptive field} of the neurons  at layer $\ell+1$
and $|\CM_\ell |=M_\ell$ is its dimension (sometimes referred to as \emph{mask} dimension);
\item  $T^{(i,\ell)}: \RE^{N_{\ell}} \to \RE^{M_\ell}$ is the
 function extracting the receptive field of neuron
$i=1,\dots,N_{\ell+1}$ at layer $\ell$ from a spatial vector,  
possibly implementing various forms of pooling and the nonlinear activation. 
For a given point $\bz \in \RE^{N_\ell}$ we use the notation
\[
T^{(i,\ell)}(\bz)
=
\bigl[
T^{(i,\ell)}_m(\bz) : m \in \CM_{\ell}
\bigr].
\]
\end{itemize}

Both stride and pooling can be incorporated in the receptive field
formalism through a suitable definition of a patch extractor function $T^{(i,\ell)}$. Some examples will be given in the next subsection.

For a generic input,
\[
\bx=[x_{c,i}:c=1,\dots,C_0,i=1,\dots,N_0] \in \RE^{C_0 \times N_0},
\]
the pre-activation functions at the first layer
$h_{c,i}^{(1)}(\bx)$ are defined, 
for $c=1,\dots,C_1$,
and $i=1,\dots,N_1$,  by
\begin{equation}
h_{c,i}^{(1)}(\bx)
=
\frac{1}{\sqrt{M_0 C_{0}}}
\sum_{c'=1}^{C_{0}}
\sum_{m \in \CM_0}
W^{(0)}_{m,c',c}
\, T^{(i,0)}_m(x_{c',:}),
\end{equation}
and, for layers $\ell = 1,\dots,L$,
\begin{equation}\label{convolutiona.recur1}
   h_{c,i}^{(\ell+1)}(\bx) = \frac{1}{\sqrt{M_\ell C_\ell}} \sum_{c'=1}^{C_\ell} \sum_{m  \in \CM_\ell} W^{(\ell)}_{m,c',c}   
    T^{(i,\ell)}_m \left(h^{(\ell)}_{c',:}(\bx)\right)\,, 
\end{equation}
where  $c=1,\dots,C_{\ell+1}$, $i=1,\dots,N_{\ell+1}$ and 
$W^{(\ell)}_{m,c,c'}$ are the  trainable weights.
Above
$x_{c,:}$ is the vector 
 $x_{c,:}:=(x_{c,1},\dots, x_{c,N_0})^\top$ 
and similarly 
\[
h^{(\ell)}_{c,:}(\bx):=(h^{(\ell)}_{c,1}(\bx),\dots, 
h^{(\ell)}_{c,N_\ell}(\bx))^\top.
\]

The output of the network for an input $\bx$ is 
\[
h_{c,i}^{(L+1)}(\bx), 
 \quad c=1,\dots,C_{L+1}, i=1,\dots, N_{L+1}.
 \]
A crucial feature of convolutional networks is that the weights
$W^{(\ell)}_{m,c',c}$ do not depend on the spatial index $i$:
the same local rule is applied at every location.

\subsection{Examples}\label{Sec:examples}
In building a CNN one needs to choose an order for the various stages, including the non-linear activation $\sigma: \RE \to \RE$.  
The most common ordering is: (1) activation, (2) pooling and padding, and (3) convolution; see Chapter 9 in \cite{Goodfellow-et-al-2016}. 
With this notation, the {\it receptive field / patch extractor operator} formalism 
provides a fairly general way to describe different convolutional 
mechanisms (including stride, padding, pooling, etc.). 
For the sake of clarity, we show below how two  classical network 
architectures can be described within this framework.

\begin{example}[{Convolutional 1D with stride 1, periodic padding and average pooling with stride 2.}]
Here the spatial dimension is identified with points on a
circle, i.e.\ for layer $\ell$ with the discrete torus
$\mathbb{Z}/N_\ell$.
In order to keep the index notation ranging over
$i=1,\dots,N_\ell$, the integers modulo $N_\ell$ are identified with
$\{1,\dots,N_\ell\}$ rather than $\{0,\dots,N_\ell-1\}$.
We describe only the step $\ell \to \ell+1$, assuming for simplicity
that $N_\ell=2N_{\ell+1}$, $M_\ell=3$ and
$\CM_\ell=\{-1,0,1\}$
We want to describe the composition of a linear pooling step that
reduces the spatial dimension from $N_\ell$ to
$N_{\ell+1}=N_\ell/2$ (averaging two non-overlapping neurons at a
time), followed by a convolutional layer.
Define $R^{(0)}:\RE^{N_\ell}\to\RE^{N_{\ell+1}}$ by
\[
R^{(0)}(z)
=
\left[
\frac{z_1+z_2}{2},
\frac{z_3+z_4}{2},
\dots,
\frac{z_{N_\ell-1}+z_{N_\ell}}{2}
\right],
\]
which implements average pooling with window size $2$ and stride $2$. Note that max-pooling can be also used as $R^{(0)}$ operator.
 After the pooling we perform a
convolution with a mask of size $3$ and stride $1$.
To this end we set
\[
R^{(i,\ell)}(z)
=
\bigl(
R^{(0)}(z)_{i-1},
R^{(0)}(z)_{i},
R^{(0)}(z)_{i+1}
\bigr)
\quad \text{mod } N_{\ell+1}.
\]
Choosing $T^{(i,\ell)}(z)=R^{(i,\ell)}(\sigma(z))$ (where $\sigma$ is as usual applied component-wise), 
one obtains
\[
h_{c,i}^{(\ell+1)}(\bx)
=
\frac{1}{\sqrt{3 C_\ell}}
\sum_{c'=1}^{C_\ell}
\sum_{m\in\{-1,0,1\}} \! \!\!
W^{(\ell)}_{m,c',c}  \frac{1}{2} 
\left (\sigma\!\big(h^{(\ell)}_{c',\,2(i+m)-1}(\bx)\big)+\sigma\!\big(h^{(\ell)}_{c',\,2(i+m)}(\bx)\big) \right ).
\]
where $2(i+m)$ is considered 
${\text{mod } N_{\ell+1}}$. 
\end{example}

\begin{example}[{Convolutional 2D with stride 1 and zero padding.}]
Assume that the spatial index $i$ is a two–dimensional multi-index
$i=(i_1,i_2)$.
At each layer
\[
i=(i_1,i_2)\in
\Lambda_\ell=
\{(i_1,i_2)\in\mathbb{Z}^2:
0\le i_1\le\tilde N_\ell,\,
0\le i_2\le\tilde N_\ell
\}.
\]

Clearly $N_\ell=\tilde N_\ell^2$.
Assuming for simplicity $N_\ell=N_0$ for all $\ell$, consider the mask $\CM_\ell=\CM=\{ (\delta_1,\delta_2): \delta_i=0,1,-1\}$ 
of size $3\times3$, with zero padding and unit stride.

For every $i=(i_1,i_2)$ we define
\begin{equation}\label{R0exzeropadd}
\begin{split}
R^{(i,\ell)}(z)
=
(& z_{i_1,i_2},
z_{i_1+1,i_2},
z_{i_1+1,i_2+1},
z_{i_1+1,i_2-1},
z_{i_1-1,i_2},
\\
& z_{i_1-1,i_2-1},
z_{i_1-1,i_2+1},
z_{i_1,i_2-1},
z_{i_1,i_2+1}),
\end{split}
\end{equation}
where $z=[z_i:i\in\Lambda_\ell]$.
Zero padding means that in the right–hand side above
$z_{a,b}$ is set equal to $0$ whenever $(a,b)\notin\Lambda_\ell$.
Taking $T^{(i,\ell)}=\sigma\circ R^{(i,\ell)}$ one obtains
\[
h_{c,i}^{(\ell+1)}(\bx)
=
\frac{1}{\sqrt{9 C_\ell}}
\sum_{c'=1}^{C_\ell}
\sum_{m\in\CM}
W^{(\ell)}_{m,c',c}
\sigma\!\left(
h^{(\ell)}_{c',\,i+m}(\bx)
\right),
\]
with the convention that
$h^{(\ell)}_{c',\,i+m}(\bx)=0$ if $i+m\notin\Lambda_\ell$.
Here the padding is applied before the activation.
Reversing the order of $\sigma$ and $R^{(i,\ell)}$
corresponds to applying the padding after the activation.
The two constructions coincide whenever $\sigma(0)=0$.
\end{example}

\subsection{Conditional Gaussian  structure}\label{Sec:markov-covariances}

In a Bayesian neural network, 
 a prior (probability distribution)  for the weights 
 \[
 \CW=[ W^{(\ell)}_{m,c',c} :\ell=0,\dots,L, m \in \CM_\ell , c=1,\dots,C_{\ell+1}, c'=1,\dots,C_{\ell} ]
 \] is specified.

A very common prior  is the Gaussian prior.
Here and in the rest of the paper $\CN(\mathbf{m},\mathbf{C})$ denotes the Gaussian distribution 
 with mean $\mathbf{m}$ and covariance matrix $\mathbf{C}$. 
In what follows, all the random elements are supposed to be defined on a common  probability space 
$(\Omega,\CF,\PP)$. 
\begin{assumption}[Gaussian prior]\label{A0}
 The weights $W^{(\ell)}_{m,c',c}$ are  independent and per layer identically distributed random variables with
\begin{equation}
    W^{(\ell)}_{m,c',c} \sim \CN\left( 0, {\lambda_\ell^{-1}}\right).
\end{equation}
\end{assumption}

We shall consider a fixed number $P$ of inputs
\[
\bx_{\mu}=[x_{\mu,c,i}: \, c=1,\dots,C_0; i=1,\dots,N_0 ],  \qquad \mu=1,\dots,P.
\]
Note that the collection of all inputs 
$
[x_{\mu,c,i}: \, c=1,\dots,C_0; i=1,\dots,N_0; \mu=1,\dots,P]$
as well as
all the activations at each layer, i.e. 
\[
{H}^{(\ell)}:=[h_{c,i}^{(\ell)}(\bx_\mu): c=1,\dots,C_{\ell},i=1,\dots,N_{\ell},\mu=1,\dots,P],
\]
 can be considered as a 3-D tensor. 

\begin{remark}[Tensor notation]
Given a  $K$ dimensional tensor   
\[
{A}:=[A_{a_1,a_2,\dots,a_K} : a_1=1,\dots,D_1,\dots,a_K=1,\dots,D_K]
\]
with dimensions $D_1,\dots,D_K$,
 the slice along the component $a_j=\alpha$ is the $K-1$ tensor 
$A_{:,:,\dots,\alpha,: ,:,\dots}$
which fix the $j$-th component to be $\alpha$.  
  This is consistent with the same notation that has already been introduced for matrices. 
 More generally, one can consider a slicing which fixes more components, for example if $K=3$, then $A_{\alpha,\beta,:}$ is the vector  $A_{\alpha,\beta,:}=[A_{\alpha,\beta,a_3}: a_3=1,\dots,D_3]$.
 In order to avoid notational burden, we write $\RE^{D_1 \times \dots \times D_K}$ for the set of 
all  $D_1 \times \dots \times D_K$ tensors. 
\end{remark}

Let us introduce the $\sigma$-field containing the information expressed by  the activation functions up to layer $\ell$, that is
\[
\CF^{\ell}:=\sigma \left ( {H}^{(r)}:r=1,\dots,\ell
\right ),
\]
and define for $i,j=1,\dots,N_0$
    \begin{equation*}
        \CK_{i,j}^{(1,C_0)} (\bx_\mu,\bx_\nu): =\frac{1}{ \lambda_{0}  {C_{0}} M_1}  \sum_{c=1}^{C_0 } \sum_{m  \in \CM_1}  T^{(i,0)}_m(x_{\mu,c,:})
       T^{(j,0)}_m  (x_{\nu,c,:}) 
    \end{equation*}
   and, for $\ell=1,\dots,L$ and $i,j=1,\dots,N_\ell$
    \begin{equation*}
        \CK_{i,j}^{(\ell+1, C_{\ell})}(\bx_\mu,\bx_\nu) := \frac{1}{   \lambda_{\ell} {C_{\ell}}M_{\ell}} \sum_{c=1}^{C_{\ell}} 
        \sum_{m  \in \CM_{\ell}}    T^{(i,\ell)}_m \left(h^{(\ell)}_{c,:}(\bx_\mu)\right)  T^{(j,\ell)}_m \left( h^{(\ell)}_{c,:}(\bx_\nu)\right).
    \end{equation*}

\begin{proposition}\label{Prop:conditional_cov} Under {\rm Assumption \ref{A0}},
    for every $\ell = 0,\ldots,L$,
    the collection of random variables $[h_{c,i}^{(\ell+1)}(\bx_\mu): c=1,\dots,C_{\ell+1},i=1,\dots,N_{\ell+1},\mu=1,\dots,P]$, 
    conditionally on $\CF^{\ell}$, 
    are jointly normal with zero means and covariances 
    \[
    \operatorname{Cov}\bigl(
    h_{c,i}^{(\ell+1)}(\bx_\mu),
    h_{c',j}^{(\ell+1)}(\bx_\nu)
    \mid \mathcal{F}^\ell
    \bigr)
    =
    \delta_{c,c'}\,
    \mathcal{K}_{i,j}^{(\ell+1,C_\ell)}(\bx_\mu,\bx_\nu).
    \]
\end{proposition}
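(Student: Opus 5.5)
The plan is to exploit the fact that, conditionally on $\CF^{\ell}$, each $h_{c,i}^{(\ell+1)}(\bx_\mu)$ is a linear combination of the layer-$\ell$ weights with $\CF^\ell$-measurable coefficients. By \eqref{convolutiona.recur1} (or its first-layer analogue for $\ell=0$, where $\CF^0$ is the trivial $\sigma$-field and the coefficients are deterministic functions of the inputs), we have
\[
h_{c,i}^{(\ell+1)}(\bx_\mu)=\sum_{c'=1}^{C_\ell}\sum_{m\in\CM_\ell} a^{(\mu,i)}_{m,c'}\,W^{(\ell)}_{m,c',c},
\qquad a^{(\mu,i)}_{m,c'}:=\frac{1}{\sqrt{M_\ell C_\ell}}\,T^{(i,\ell)}_m\bigl(h^{(\ell)}_{c',:}(\bx_\mu)\bigr).
\]
The coefficients $a^{(\mu,i)}_{m,c'}$ are $\CF^\ell$-measurable, and they do not depend on the output channel $c$.

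The key structural step is independence. The vector $H^{(1)},\dots,H^{(\ell)}$ is a measurable function of $W^{(0)},\dots,W^{(\ell-1)}$ alone; this follows by induction on the recursion, noting that the inputs $\bx_\mu$ are deterministic. By Assumption \ref{A0}, the weight blocks of different layers are independent, so $W^{(\ell)}$ is independent of $\CF^\ell$. Consequently, a regular conditional distribution of $W^{(\ell)}$ given $\CF^\ell$ is simply its law, namely i.i.d.\ $\CN(0,\lambda_\ell^{-1})$ entries.

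Next, we compute the conditional characteristic function. Fix real coefficients $t_{c,i,\mu}$ and set $S=\sum_{c,i,\mu} t_{c,i,\mu}h^{(\ell+1)}_{c,i}(\bx_\mu)$. This can be rewritten as $S=\sum_{c,c',m} b_{m,c',c}W^{(\ell)}_{m,c',c}$, with the $\CF^\ell$-measurable coefficients $b_{m,c',c}=\sum_{i,\mu}t_{c,i,\mu}a^{(\mu,i)}_{m,c'}$. By the standard freezing lemma (independence combined with measurability of the coefficients), we get
\[
\E[e^{\mathrm{i}S}\mid\CF^\ell]=\exp\Bigl(-\tfrac{1}{2\lambda_\ell}\sum_{c,c',m}b_{m,c',c}^2\Bigr).
\]
This is the characteristic function of a centered Gaussian, which gives joint conditional normality with zero mean. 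Expanding $\sum b^2$ then identifies the conditional covariance:
\[
\operatorname{Cov}\bigl(h^{(\ell+1)}_{c,i}(\bx_\mu),h^{(\ell+1)}_{c',j}(\bx_\nu)\mid\CF^\ell\bigr)=\lambda_\ell^{-1}\sum_{c''}\sum_m a^{(\mu,i)}_{m,c''}a^{(\nu,j)}_{m,c''}\,\delta_{c,c'}.
\]
The factor $\delta_{c,c'}$ appears because different output channels use disjoint, independent weights. The double sum equals $\frac{1}{\lambda_\ell C_\ell M_\ell}\sum_{c''}\sum_m T^{(i,\ell)}_m(\cdot)T^{(j,\ell)}_m(\cdot)=\CK^{(\ell+1,C_\ell)}_{i,j}(\bx_\mu,\bx_\nu)$.

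The main obstacle is not analytic but bookkeeping. One point is the rigorous justification of the freezing step, which is handled by the independence established above. The other is checking the normalization. The definition of $\CK^{(1,C_0)}$ as printed uses $M_1$ and $\CM_1$, whereas the first-layer recursion uses $M_0$ and $\CM_0$; I would read this as $M_0$ and $\CM_0$ (a typo) so that the normalizations match. No integrability issues arise, since the conditional law is characterized through characteristic functions.
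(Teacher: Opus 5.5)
Your argument is correct and is the standard one the paper has in mind when it calls the proof ``straightforward'' and omits it. Since $W^{(\ell)}$ is independent of $\CF^\ell$, and the layer-$(\ell+1)$ pre-activations are linear in $W^{(\ell)}$ with $\CF^\ell$-measurable coefficients that do not depend on the output channel, the conditional law is centered Gaussian with the stated $\delta_{c,c'}$-block covariance. Your reading of $M_1,\CM_1$ in the definition of $\CK^{(1,C_0)}$ as a typo for $M_0,\CM_0$ is also right: both the first-layer recursion and the fact that $T^{(i,0)}$ takes values in $\RE^{M_0}$ require it.
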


The proof is straightforward and is omitted.
We now re-write the (random) covariance tensors appearing 
in the previous Proposition in a more convenient way. 
Define  a function 
$G^{(\ell)}:\RE^{N_\ell \times P} \to \RE^{N_{\ell+1} \times P \times  N_{\ell+1} \times P}$  by 
\begin{equation}\label{def.Gell}
\begin{split}
\bigl[ G^{(\ell)}(z) \bigr]_{i,\mu;j,\nu}
&=
\frac{1}{\lambda_{\ell} M_{\ell}}
\sum_{m \in \mathcal{M}_{\ell}}
 T^{(i,\ell)}_m\left(z_{:,\mu} \right)
T^{(j,\ell)}_m\left( z_{:,\nu} \right),
\\
&
i,j = 1,\dots,N_{\ell+1},
\quad
\mu,\nu = 1,\dots,P .
\end{split}
\end{equation}
and note that   
\[
\mathbf{K}^{(\ell+1,C_\ell)} :=     \Big [ \CK_{i,j}^{(\ell+1,C_\ell)} (\bx_\mu,\bx_\nu) \Big ]_{i,\mu;j,\nu}
= 
\Big[
\frac{1}{C_{\ell}} \sum_{c=1}^{C_{\ell}}
G^{(\ell)}( H^{(\ell)}_{c,:,:} )
\Big]_{i,\mu;j,\nu}
\]
 where  
$[H^{(\ell)}_{c,i,\mu}]_{c,i,\mu}:=[h^{(\ell)}_{c,i}(\bx_\mu)]_{c,i,\mu}$.
Above, in evaluating $G^{(\ell)}(z)$ in $z=H_{c,:,:}^{(\ell)}$ one has 
$T^{(i,\ell)}_m(z_{:,\mu})= T^{(i,\ell)}_m(H^{(\ell)}_{c,:,\mu})=T^{(i,\ell)}_m(h^{(\ell)}_{c,:}(\bx_\mu))$ and similarly for the  $(j,\nu)$ component.

Flattening the 4-dimensional tensor \( G^{(\ell)} \in \mathbb{R}^{N_{\ell+1} \times P \times N_{\ell+1} \times P} \) into a matrix of shape \( N_{\ell+1}P \times N_{\ell+1}P \), by mapping the index pair \((i, \mu)\) to a single index \(k(i,\mu) = \mu + (i-1)P\), and similarly flattening the 2-dimensional tensor \( z \in \mathbb{R}^{N_\ell \times P} \) into a vector of size \( N_\ell P \), we can interpret \( G^{(\ell)} \) as a function
\[
G^{(\ell)}: \mathbb{R}^{N_\ell P} \to \mathcal{S}^+_{N_{\ell+1}P}
\]
where \( \mathcal{S}^+_D \) denotes the set of symmetric, positive semi-definite matrices of size \( D \times D \). The positivity follows  since 
$\mathbf{u}^\top G^{(\ell)}(z) \mathbf{u} \geq 0$.

\begin{remark}
Above and in all the rest of the paper,  we  identify the tensors $G^{(\ell)}(z)$ and $ \mathbf{K}^{(\ell+1,C_\ell)}$  with their flattened matrix form, while retaining the full tensor notation  in indexing for clarity.
\end{remark}

\section{Main results}\label{Sec:MainRes}

Our analysis of the asymptotic properties of convolutional neural networks relies on the following  assumptions.

\begin{assumption}[Infinite channels limit]\label{A1} 
 $L,N_{0},\dots,N_{L+1}, C_0,C_{L+1}$ and $ P$  are  fixed, while  $C_1,\dots, C_{L}$ increase linearly  as a function of $n$, i.e. 
  $C_\ell=C_\ell(n)$ with 
 \[
  \lim_{n \to +\infty} C_\ell(n)/n=\alpha_\ell \in (0,+\infty).
 \]
\end{assumption}

\begin{assumption}[Exponential growth condition]\label{A2_0} 
 For every $\ell=1,\dots,L$ and $i = 1, \ldots, N_{\ell+1}$,  the patch extractor  functions  $T^{(i,\ell)}$ are continuous.  Moreover, 
 there are constants  $0 \leq r <2$ and $0<A_T<+\infty$, $0<B_T<+\infty$  such that
    \begin{equation*}
        |T^{(i,\ell)}_m(z) | \le A_T e^{B_T \|z\|_2^{r}} \qquad \forall z \in \RE^{N_\ell}
    \end{equation*}
    for every $m,i,\ell$.
\end{assumption}

In order to prove a large deviation principle, we shall need a stronger assumption.

\begin{assumption}[Asymptotic Lipschitz  condition]\label{A2_0BIS} 
 The functions $T^{(i,\ell)}$ are continuous and 
for every $z,z'$ in $\RE^{N_\ell}$
\begin{equation*}
|T_m^{(i,\ell)}(z)-T_m^{(i,\ell)}(z')|\le L_T\|z-z'\|_2 + \rho_T(z) + \rho_T(z'),
\end{equation*}
with $\rho_T$ locally bounded and 
$\rho_T(z) = o(\|z\|_2)$ as $\|z\|_2\to\infty$. 
\end{assumption}

In what follows, to simplify the notation, we set 
\[
D_\ell:=N_\ell P
\qquad \text{and} \qquad 
 \mathbf{K}^{(\ell+1,n)} := \mathbf{K}^{(\ell+1,C_\ell(n))} .
\]

\subsection{Covariance concentration and asymptotic normality}
We shall now show that for convolutional neural networks with Gaussian weights, within the infinite channel regime, the random covariance tensor concentrates to a deterministic limit, exactly as in the fully connected case. Consequently, the network output converges in distribution to a Gaussian process.

Following the convention previously introduced, a 2D random tensor $Z$
is distributed as $\CN(\mathbf{0}, C)$, with $C$ a 4D positive symmetric tensor,
if its components $\{Z_{i,\mu}\}$ are jointly Gaussian random variables
with zero mean and covariance
\(
\Cov(Z_{i,\mu}, Z_{j,\nu}) = C_{i,\mu;j,\nu}.
\)
Analogously, a random tensor $Z = [Z_{i,c,\mu}]$ is said to be a 3D Gaussian random tensor
with zero mean and 6D covariance tensor $C_{i,c,\mu;j,c',\nu}$ if its components
$\{Z_{i,c,\mu}\}$ are jointly Gaussian random variables with zero mean and
\(
\Cov(Z_{i,c,\mu}, Z_{j,c',\nu}) = C_{i,c,\mu;j,c',\nu}.
\)

In the rest of the paper, if $X_n$ and $X$ are random elements, $X_n   \stackrel{\CL}{\to} X$ will denote the convergence in law of  $X_n$ 
to $X$, while $X_n   \stackrel{\PP}{\to} X$
denotes convergence in probability.  

\begin{theorem}[Covariance concentration in CNNs]
\label{prop:cnn_lln}
       Let  {\rm\ref{A0}},  {\rm\ref{A1}} and  {\rm\ref{A2_0}} be in force. 
       Then, as $n\to\infty$, 
    \begin{equation*}
    \left(
     \mathbf{K}^{(2,n)},\ldots, \mathbf{K}^{(L+1,n)}
    \right) 
    \stackrel{\PP}{\to}
    \left(
     \mathbf{K}^{(2)},\ldots, \mathbf{K}^{(L+1)}
    \right)
    \end{equation*}
  where  the deterministic tensors  $\mathbf{K}^{(\ell)}$ are 
    recursively defined     for $\ell = 1,\ldots,L$ by
    \begin{equation}
    \label{eq:cnn_nngp_recursion}
\mathbf{K}^{(\ell+1)}_{i,\mu;j,\nu} =
 \frac{1}{  \lambda_{\ell}  M_{\ell}}
        \sum_{m  \in \CM_{\ell}}   \E\left [  T^{(i,\ell)}_m\left( Z^{(\ell)}_{:,\mu} \right)  T^{(j,\ell)}_m \left( Z^{(\ell)}_{:,\nu} \right) \right]  
    \end{equation}
    $i,j=1,\dots,N_{\ell+1}$
    and $\mu,\nu=1,\dots,P $, 
    with $Z^{(\ell)} \sim\CN (\mathbf{0}, \mathbf{K}^{(\ell)})$ and 
\[
\mathbf{K}^{(1)}_{i,\mu;j,\nu}:=\frac{1}{ \lambda_{0}  {C_{0}} M_1}  \sum_{c=1}^{C_0 } \sum_{m  \in \CM_1}  T^{(i,0)}_m(x_{\mu,c,:})
       T^{(j,0)}_m  (x_{\nu,c,:}).
       \]
\end{theorem}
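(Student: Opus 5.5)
We argue by induction on $\ell$, exploiting the conditional Gaussian structure of Proposition~\ref{Prop:conditional_cov}. The base case is trivial: $\mathbf{K}^{(1,C_0)}=\mathbf{K}^{(1)}$ is deterministic. For the inductive step, assume $\mathbf{K}^{(\ell,n)}\stackrel{\PP}{\to}\mathbf{K}^{(\ell)}$ (jointly with the earlier layers). By Proposition~\ref{Prop:conditional_cov}, conditionally on $\CF^{\ell-1}$ the channel slices $H^{(\ell)}_{c,:,:}$, $c=1,\dots,C_\ell(n)$, are i.i.d.\ $\CN(\mathbf{0},\mathbf{K}^{(\ell,n)})$. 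Hence there are i.i.d.\ standard Gaussian vectors $\xi_c\in\RE^{D_\ell}$, independent of $\CF^{\ell-1}$, such that $H^{(\ell)}_{c,:,:}=(\mathbf{K}^{(\ell,n)})^{1/2}\xi_c$ in law, jointly with $\CF^{\ell-1}$. Consequently
\[
\mathbf{K}^{(\ell+1,n)}\stackrel{\CL}{=}\frac{1}{C_\ell(n)}\sum_{c=1}^{C_\ell(n)} G^{(\ell)}\bigl((\mathbf{K}^{(\ell,n)})^{1/2}\xi_c\bigr).
\]

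We split this as $A_n+B_n$, where
\[
A_n=\frac1{C_\ell}\sum_c\Bigl[G^{(\ell)}(S_n\xi_c)-\Phi(S_n)\Bigr],\qquad B_n=\Phi(S_n),
\]
with $S_n=(\mathbf{K}^{(\ell,n)})^{1/2}$ and $\Phi(S):=\E\,G^{(\ell)}(S\xi)$. Since the matrix square root is continuous on $\mathcal{S}^+_{D_\ell}$, we have $S_n\stackrel{\PP}{\to}S:=(\mathbf{K}^{(\ell)})^{1/2}$, and $\Phi(S)$ equals the right-hand side of \eqref{eq:cnn_nngp_recursion}.

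\textbf{Continuity of $\Phi$.} Assumption~\ref{A2_0} gives $\|G^{(\ell)}(z)\|\le C e^{2B_T\|z\|^r}$. Because $r<2$, on any ball $\|S\|\le R$ this is dominated by $Ce^{2B_T R^r\|\xi\|^r}$, which is integrable (indeed has all exponential moments of order below the Gaussian tail). Together with continuity of $T$, dominated convergence shows that $\Phi$ is continuous, so $B_n\stackrel{\PP}{\to}\Phi(S)$.

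\textbf{The term $A_n$ (main obstacle).} This is a law of large numbers with a random, $n$-dependent but $\xi$-independent parameter $S_n$. Conditionally on $\CF^{\ell-1}$, on the event $\{\|S_n\|\le R\}$ the summands are i.i.d.\ and centered, with second moment bounded uniformly by $\sup_{\|S\|\le R}\E\|G^{(\ell)}(S\xi)\|^2\le C_R<\infty$; this uses the same $r<2$ domination with $4B_T$ in place of $2B_T$. Chebyshev's inequality then gives
\[
\PP\bigl(\|A_n\|>\eps,\ \|S_n\|\le R\bigr)\le \frac{C_R}{\eps^2 C_\ell(n)}\to0,
\]
while $\PP(\|S_n\|>R)$ is small for large $R$ by tightness of $S_n$.

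Joint convergence over all layers follows because each limit is deterministic. This gives $\mathbf{K}^{(\ell+1,n)}\stackrel{\PP}{\to}\mathbf{K}^{(\ell+1)}$; Assumption~\ref{A1} is used only to ensure $C_\ell(n)\to\infty$. The one point requiring care is the coupling argument: the equality in law is valid only conditionally. However, the conditional Chebyshev bound is taken given $\CF^{\ell-1}$, and $\|S_n\|$ is $\CF^{\ell-1}$-measurable, so the bound holds directly for the actual $\mathbf{K}^{(\ell+1,n)}$ after taking expectations.
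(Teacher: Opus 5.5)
Your argument is correct, but it handles the random conditioning covariance differently from the paper.

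\textbf{The paper's route.} It first recasts the problem in the abstract Markov framework of Definition~\ref{def:general_structure} via Lemma~\ref{lemma:fromCNNtoGeneral}. It then combines two lemmas.
\begin{itemize}
\item Lemma~\ref{lemma:triangle_lln} is a law of large numbers for \emph{deterministic} $Q_n\to Q$. It couples the same $Z_c$ at $Q_n$ and at $Q$, applies the SLLN at the fixed $Q$, and shows the perturbation $\Delta_n\to0$ in $L^1$ by dominated convergence.
\item Lemma~\ref{lemma:bridge-lln} is an abstract transfer principle, proved by subsequence extraction and dominated convergence. It upgrades ``the kernels converge to $\delta_{y_0}$ along every deterministic $x_n\to x_0$'' to convergence in probability when the conditioning variable converges in probability.
\end{itemize}

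\textbf{Your route.} You deal with the random $S_n$ directly. You center at the conditional mean $\Phi(S_n)$. You control the fluctuation $A_n$ by a conditional Chebyshev bound that is uniform on $\{\|S_n\|\le R\}$, using the $\CF^{\ell-1}$-measurability of $S_n$. You treat $B_n$ by continuity of $\Phi$ plus the continuous mapping theorem. Continuity of $\Phi$ plays the role of the paper's $\E\|\Delta_n\|\to0$, and both rest on the same domination by $e^{c\|\xi\|^r}$ with $r<2$.

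\textbf{What each buys.} Your approach is more elementary and self-contained. It avoids the subsequence argument and gives an explicit rate $C_R/(\varepsilon^2C_\ell(n))$ for the fluctuation term. The cost is a uniform second-moment bound, which is harmless here because $r<2$. The paper's route needs only first moments in the fixed-$Q$ step. It is also built around a reusable ``along every sequence $x_n\to x_0$'' principle, which parallels the conditional LDP continuity condition later used for Theorem~\ref{prop:gs_ldp0}.

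\textbf{A minor simplification.} Since $\mathbf{K}^{(\ell+1,n)}=C_\ell^{-1}\sum_c G^{(\ell)}(H^{(\ell)}_{c,:,:})$ holds exactly, you can define $A_n$ with $H^{(\ell)}_{c,:,:}$ in place of $S_n\xi_c$. This removes the need for the in-law representation altogether, which is essentially what your closing remark already does.
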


An important consequence of this result is that in the infinite width regime the neural
network simplifies significantly. 
In fact, Proposition \ref{Prop:conditional_cov}
can be rephrased as follows. Let $C\leq C_{\ell}(n)$ be a fixed constant independent of $n$ and define 
 $\mathbf{H}^{(\ell)}=\veec (\veec (H_{1,:,:}^{(\ell)}),\veec(H_{2,:,:}^{(\ell)}),\dots,\veec(H_{C,:,:}^{(\ell)}))$. Then conditionally on 
$\mathcal{F}^{\ell-1}$, $\mathbf{H}^{(\ell)} \sim \CN(\mathbf{0},\mathbf{I}_{C} \otimes \mathbf{K}^{(\ell,n)})$, which is 
\begin{equation}\label{vector-rep}
    \mathbf{H}^{(\ell)}\stackrel{\CL}{=} 
\sqrt{\mathbf{I}_{C} \otimes \mathbf{K}^{(\ell,n)}} \mathbf{Z}
\end{equation}
where $\sqrt{Q}$ is the usual square root of a symmetric and positive semidefinite matrix $Q$ and $\mathbf{Z} \stackrel{}{\sim} \CN(\mathbf{0},\mathbf{I}_{CD_\ell})$.
The continuous mapping theorem, together with Theorem \ref{prop:cnn_lln}, provides a simplified proof of a slightly more general version of the results in \cite{Rasmussen2019, novak2019bayesian}.

\begin{theorem}[Gaussian limit in CNNs]\label{CLT}
     Let   {\rm \ref{A0}},  {\rm\ref{A1}} and  {\rm\ref{A2_0}} be in force. Then for $\ell = 1,\ldots,L+1$ and any fixed positive integer $C$, as $n\to\infty$ one has
    \begin{equation*}
        \left[ h_{i,c}^{(\ell)}\left(\bx_{\mu}\right): i =1,\ldots,N_\ell; c = 1,\ldots,C;\mu = 1,\ldots,P \right]
        \stackrel{\CL}{\to}
        \mathbf{\tilde Z}=[\tilde Z_{i,c,\mu}] \sim \CN \left(\mathbf{0}, \mathbf{\Sigma}^{(\ell)}\right),
    \end{equation*}
    with $\mathbf{\Sigma}^{(\ell)} = \mathbf{I}_{C} \otimes \mathbf{K}^{(\ell)}=\left[\delta_{c,c'} \mathbf{K}^{(\ell)}_{i,\mu,j,\nu}\!:\! c,c' = \!\! 1,\ldots,C; i,j =1,\ldots,N_\ell;\mu,\nu = 1,\ldots,P\right]$, 
     and $\mathbf{K}^{(2)}$, $\ldots, \mathbf{K}^{(L+1)}$ being the deterministic tensors defined 
    by \eqref{eq:cnn_nngp_recursion}.
\end{theorem}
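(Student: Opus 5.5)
The plan is to combine the conditional Gaussian representation \eqref{vector-rep} with Theorem~\ref{prop:cnn_lln}, using characteristic functions and the continuous mapping theorem. Fix $\ell\in\{1,\dots,L+1\}$ and a fixed $C$. For $n$ large we have $C\le C_\ell(n)$. (For $\ell=1$ and $\ell=L+1$ the channel numbers are fixed, so we take $C\le C_1$, resp.\ $C\le C_{L+1}$.) By Proposition~\ref{Prop:conditional_cov}, conditionally on $\CF^{\ell-1}$ the vector $\mathbf{H}^{(\ell)}$ is $\CN(\mathbf{0},\mathbf{I}_C\otimes\mathbf{K}^{(\ell,n)})$. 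Hence, for every $\mathbf{t}\in\RE^{CD_\ell}$,
\[
\E\bigl[e^{i\mathbf{t}^\top\mathbf{H}^{(\ell)}}\bigr]=\E\Bigl[\exp\bigl(-\tfrac12\,\mathbf{t}^\top(\mathbf{I}_C\otimes\mathbf{K}^{(\ell,n)})\mathbf{t}\bigr)\Bigr].
\]
For $\ell=1$, $\mathbf{K}^{(1,n)}=\mathbf{K}^{(1)}$ is deterministic and there is nothing to prove. For $\ell\ge2$, Theorem~\ref{prop:cnn_lln} gives $\mathbf{K}^{(\ell,n)}\convp\mathbf{K}^{(\ell)}$. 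The integrand is a continuous function of $\mathbf{K}^{(\ell,n)}$, since $Q\mapsto \exp(-\frac12\mathbf{t}^\top(\mathbf{I}_C\otimes Q)\mathbf{t})$ is continuous. It is also bounded by $1$ on positive semidefinite matrices. So the continuous mapping theorem and bounded convergence give convergence to $\exp(-\frac12\mathbf{t}^\top\mathbf{\Sigma}^{(\ell)}\mathbf{t})$, and L\'evy's continuity theorem concludes.

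An equivalent route is to use \eqref{vector-rep} directly. Take $\mathbf{Z}$ independent of $\mathbf{K}^{(\ell,n)}$; this is legitimate because the conditional law of $\mathbf{H}^{(\ell)}$ given $\CF^{\ell-1}$ depends on $\CF^{\ell-1}$ only through $\mathbf{K}^{(\ell,n)}$. Then $\mathbf{H}^{(\ell)}\stackrel{\CL}{=}\sqrt{\mathbf{I}_C\otimes\mathbf{K}^{(\ell,n)}}\,\mathbf{Z}$. Since the matrix square root is continuous on $\psd{D}$, we get $(\sqrt{\mathbf{I}_C\otimes\mathbf{K}^{(\ell,n)}},\mathbf{Z})\convd(\sqrt{\mathbf{\Sigma}^{(\ell)}},\mathbf{Z})$, because the first component converges in probability to a constant and $\mathbf{Z}$ has a fixed law. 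Applying the continuous map $(A,z)\mapsto Az$ then yields the limit $\CN(\mathbf{0},\mathbf{\Sigma}^{(\ell)})$.

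The only delicate points are minor bookkeeping. First, the reindexing between $\veec$ ordering and the tensor indices $(i,c,\mu)$ must match the Kronecker structure $\delta_{c,c'}\mathbf{K}^{(\ell)}_{i,\mu;j,\nu}$. Second, the cases $\ell=1$ and $\ell=L+1$ need the same treatment; for the output layer this only requires $\mathbf{K}^{(L+1,n)}\convp\mathbf{K}^{(L+1)}$, which is part of Theorem~\ref{prop:cnn_lln}. All of the real work is contained in Theorem~\ref{prop:cnn_lln}.
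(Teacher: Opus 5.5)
Your proof is correct and is essentially the paper's argument: the paper combines the conditional Gaussian representation \eqref{vector-rep} with Theorem~\ref{prop:cnn_lln} and the continuous mapping theorem, which is exactly your second route, and your characteristic-function version is an equivalent repackaging of the same two ingredients. One small slip: under Assumption~\ref{A1} only $C_0$ and $C_{L+1}$ are fixed, while $C_1=C_1(n)$ diverges, so for $\ell=1$ the requirement $C\le C_1(n)$ holds automatically for large $n$; this case is trivial anyway because $\mathbf{K}^{(1)}$ is deterministic.
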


\subsection{Large deviation  for the covariance tensor} 

Let us recall that  
    a sequence of measures $\{\mu_n\}_n$ on the Borel sets  $\CB(\XX)$ of a metric space $\XX$ satisfies a {\it large deviation principle}  (LDP) with rate function $I$ 
and speed $n$ if:
        \begin{enumerate}
        \item for every open set $O \in\  \CB(\XX)$ the following lower bound holds
        \begin{equation*}
        \label{eq:ldp_lower}
            \liminf_{n \to \infty} \frac{1}{n} \log \mu_n(O)  \ge - \inf_{x \in O} I(x) ;
        \end{equation*}
        \item for every closed set $C \in  \CB(\XX)$ the following upper bound holds
        \begin{equation*}
        \label{eq:ldp_upper}
            \limsup_{n \to \infty} \frac{1}{n} \log \mu_n(C)  \le - \inf_{x \in C} I(x) .
        \end{equation*}
    \end{enumerate}
    The rate function  $I:\XX \to \RE \cup \{+\infty\}$ is assumed to be a lower-semicontinuous function. 
    The rate function is said to be \textit{good} if its sub-levels are compact.  
    A sequence of random elements $(X_n)_{n \geq 1}$ is said to satisfy a LDP if the sequence of the corresponding laws, 
    i.e. $\mu_n(\cdot):=\PP\{ X_n \in \cdot\}$, satisfies a LDP.  
See \cite{dembo98}.

   For every $\ell = 0,\ldots,L$ and every   $Q_1 \in \psd{D_{\ell}}$ and $Q_2 \in \psd{D_{\ell+1}}$
   set 
      \begin{equation}
    \label{eq:gs_rate0}
        I_{\ell}\left(Q_{2} \mid Q_{1}\right) = 
        \sup_{Q_0 \in \sym{D_{\ell+1} }} 
        \left\{
        \tr\left(Q_0^\top Q_2 \right) 
        - 
        \log 
        \int_{\R^{D_\ell  }}e^{\tr(Q_0^\top G^{(\ell)}(z))}
        \CN\left(dz\mid \mathbf{0},Q_1 \right)
        \right\},
    \end{equation}
    where the $G^{(\ell)}$'s have been defined in Section \ref{Sec:markov-covariances} by means of the activation function  $\sigma$ and the patch extractor functions $R^{(i,\ell)}$'s and
    $\sym{D}$ is the set of symmetric
    $D \times D$ matrices. 

    \begin{theorem}[LDP in CNNs] 
\label{prop:gs_ldp0}
       Let  {\rm \ref{A0}}, {\rm\ref{A1}} and {\rm\ref{A2_0BIS}} be in force. 
    Then,  the sequence  \[
    \{ (\mathbf{K}^{(2,C_\ell(n))}, \ldots, \mathbf{K}^{(L+1,C_\ell(n))})\}_{n \geq 1}\]   
    satisfies a LDP on $\psd{D_2}\times \dots \times \psd{D_{L+1}}$ with good rate function given by
    \begin{equation*}
        I_{2,\dots,L+1}\left(Q_2, \ldots, Q_{L+1}\right) 
        := 
        \alpha_1 I_{1}\left(Q_{2} \mid  \mathbf{K}^{(1)}\right)
        +
        \sum\limits_{\ell=2}^L \alpha_\ell
        I_{\ell}\left(Q_{\ell+1} \mid Q_{\ell}\right).
    \end{equation*}
\end{theorem}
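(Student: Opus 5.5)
The plan is to exploit the Markov structure of the sequence $\mathbf{K}^{(\ell+1,n)}$ and prove the LDP by induction on the number of layers. Conditionally on $\CF^{\ell-1}$, Proposition~\ref{Prop:conditional_cov} says that the rows $H^{(\ell)}_{c,:,:}$, $c=1,\dots,C_\ell(n)$, are i.i.d.\ $\CN(\mathbf{0},\mathbf{K}^{(\ell,n)})$. Hence
\[
\mathbf{K}^{(\ell+1,n)}=\frac{1}{C_\ell(n)}\sum_{c=1}^{C_\ell(n)} G^{(\ell)}(\sqrt{\mathbf{K}^{(\ell,n)}}Z_c), \qquad Z_c\iid\CN(\mathbf{0},\mathbf{I}_{D_\ell}),
\]
and $\mathbf{K}^{(\ell+1,n)}$ depends on the past only through $\mathbf{K}^{(\ell,n)}$. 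It is therefore the empirical mean of i.i.d.\ vectors whose law depends continuously on the previous state.

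\textbf{Step 1: one layer.} Fix a deterministic $Q_1$. I would show that $\frac1{C}\sum_c G^{(\ell)}(\sqrt{Q_1}Z_c)$ satisfies Cram\'er's theorem with rate $I_\ell(\cdot\mid Q_1)$, at speed $C_\ell(n)\sim\alpha_\ell n$; this is where the factor $\alpha_\ell$ comes from. Assumption~\ref{A2_0BIS} gives $|T(z)|\le L_T\|z\|+o(\|z\|)+c$, so $\|G^{(\ell)}(z)\|\le a\|z\|^2+o(\|z\|^2)+b$. The log-moment generating function $\Lambda_{Q_1}(Q_0)=\log\int e^{\tr(Q_0 G(z))}\CN(dz\mid 0,Q_1)$ is then finite on a neighbourhood of $0$. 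Cram\'er's theorem in $\R^{D^2}$ yields the LDP with good rate function $\Lambda^*_{Q_1}=I_\ell(\cdot\mid Q_1)$.

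\textbf{Step 2: uniformity in the conditioning state.} I would verify the hypotheses of a Markov-chain / mixture LDP in the style of Chaganty's theorem: if $Q_1^{(n)}\to Q_1$, then the conditional laws satisfy the LDP with rate $I_\ell(\cdot\mid Q_1)$, and $(Q_1,Q_2)\mapsto I_\ell(Q_2\mid Q_1)$ is lower semicontinuous. For this I would couple $G(\sqrt{Q_1^{(n)}}Z)$ with $G(\sqrt{Q_1}Z)$ and show the two empirical means are exponentially equivalent. The asymptotic Lipschitz condition gives
\[
\|G(\sqrt{Q^{(n)}}z)-G(\sqrt Q z)\|\lesssim (\|z\|+1)\bigl(\|\sqrt{Q^{(n)}}-\sqrt Q\|\,\|z\|+\rho(\cdot)\bigr),
\]
which is $\epsilon\|z\|^2$ plus a bounded term once $n$ is large. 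The empirical mean of $\|Z_c\|^2$ has exponential moments, so Chebyshev-type bounds give super-exponential smallness of deviations larger than $\delta$. Here the sublinearity $\rho_T=o(\|z\|)$ is used: it allows an $\epsilon$-small quadratic factor. Compact sublevel sets of the joint rate then follow from the goodness at each layer together with continuity of $Q_1\mapsto\Lambda_{Q_1}$.

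\textbf{Step 3: induction and the main obstacle.} I would induct on $L$, chaining layers with Chaganty's theorem. Exponential tightness of $\mathbf{K}^{(\ell,n)}$ comes from quadratic growth plus Gaussian tails. The rates add up, with $\mathbf{K}^{(1)}$ deterministic at the first step. The delicate point is Step 2. Because $G$ is quadratic, $\Lambda_{Q_1}(Q_0)=+\infty$ for large $Q_0$, so the effective domain depends on $Q_1$ and lower semicontinuity in $Q_1$ is not automatic. I would handle this by working with the representation via the standard Gaussian $Z$ and the exponential-equivalence coupling, rather than continuity of $\Lambda$. If necessary I would truncate $G$ and pass to the limit through Dawson--G\"artner-type approximation arguments.
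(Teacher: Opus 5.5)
Your architecture is the paper's: Cram\'er at a frozen state, exponential equivalence to let the state move, Chaganty's theorem layer by layer, and exponential tightness. However, two steps do not go through as you describe them.

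The first is the exponential equivalence in Step~2. Assumption~\ref{A2_0BIS} gives no smallness of $|T(z)-T(z')|$ even when $z=z'$, because $\rho_T(z)+\rho_T(z')$ does not vanish. So your bound is at best $\|\Delta_n(z)\|_F\le \bar C_\eta+\eta\|z\|_2^2$, where $\Delta_n(z):=\CG^{(\ell)}(\sqrt{Q_{1,n}}z)-\CG^{(\ell)}(\sqrt{Q_1}z)$, and the ``bounded term'' $\bar C_\eta$ does \emph{not} tend to $0$ as $n\to\infty$. A Chernoff bound resting only on this gives $\PP(\|S_n-\tilde S_n\|_F>\delta)\le \exp\bigl(-n[t\delta-t\bar C_\eta-\log\E e^{t\eta\|Z\|_2^2}]\bigr)$, which is useless as soon as $\bar C_\eta\ge\delta$. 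The smallness has to come from continuity, with the growth bound used only for domination. Fix $t>0$ and take $\eta<1/(2t)$; then $e^{t\|\Delta_n(Z)\|_F}\le e^{t\bar C_\eta+t\eta\|Z\|_2^2}$ is integrable for all $n\ge\bar n$. Continuity of $\CG^{(\ell)}$ and of $Q\mapsto\sqrt Q$ gives $\Delta_n(Z)\to 0$ a.s., so $\log\E e^{t\|\Delta_n(Z)\|_F}\to 0$ by dominated convergence. Hence $\limsup_n\frac1n\log\PP(\|S_n-\tilde S_n\|_F>\delta)\le -t\delta$ for every $t$, which is how Lemma~\ref{Lemma:r=2cLDP} argues. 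You correctly saw that $\rho_T=o(\|z\|_2)$ makes $\eta$ arbitrarily small, hence every $t$ admissible. What you missed is that pointwise convergence supplies the actual smallness.

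The second gap is point (3) of the continuity condition, the joint lower semicontinuity of $(Q_1,Q_2)\mapsto I_\ell(Q_2\mid Q_1)$, which you leave open; truncation and Dawson--G\"artner are not needed. The domain problem you identify is handled by continuity of $M_\ell(Q_0\mid\cdot)$ only at $Q_0\in\overset{\circ}{\CD}_{\ell,Q_1}$. Pick $\delta>0$ with $(1+\delta)Q_0\in\CD_{\ell,Q_1}$ and split $e^{a+b}\le\frac{1}{1+\delta}e^{(1+\delta)a}+\frac{\delta}{1+\delta}e^{\frac{1+\delta}{\delta}b}$ using the same increment bound. This gives an integrable dominating function, so $M_\ell(Q_0\mid Q_{1,n})\to M_\ell(Q_0\mid Q_1)$, and $\liminf_n I_\ell(Q_{2,n}\mid Q_{1,n})$ is at least the supremum of $\tr(Q_0^\top Q_2)-\log M_\ell(Q_0\mid Q_1)$ over the interior. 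Convexity of $\log M_\ell(\cdot\mid Q_1)$, with $\mathbf{0}$ interior, makes $(1-1/k)Q_0$ interior with $\log M_\ell((1-1/k)Q_0\mid Q_1)\le(1-1/k)\log M_\ell(Q_0\mid Q_1)$. So that supremum already equals $I_\ell(Q_2\mid Q_1)$; see Lemmas~\ref{Lemma_genMom} and~\ref{lemma:r=2lsc}.

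Alternatively, closer to your coupling instinct, joint lower semicontinuity follows abstractly from point (2) holding along \emph{every} sequence $Q_{1,n}\to Q_1$. A diagonal argument interleaves constant sequences at $Q_{1,k}$ into one sequence converging to $Q_1$, and contradicts the LDP upper bound at $Q_1$ on a closed ball. Either way, one of these arguments must actually be given.

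Finally, Chaganty's theorem yields only a weak LDP. The full LDP and the goodness of the joint rate come from exponential tightness of the whole vector (Proposition~\ref{prop:exptight-K}), not from per-layer goodness plus continuity of $\Lambda$. That is the role your tightness estimate must play.
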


\begin{remark}[Lipschitz activation]
When $\sigma$ is Lipschitz, the examples in Section~\ref{Sec:examples} satisfy the assumptions of Theorem~\ref{prop:gs_ldp0}, since compositions of Lipschitz functions are Lipschitz and all operations performed by the patch extractor $T^{(i,\ell)}$ are Lipschitz continuous.
\end{remark}

\subsection{LDP under the posterior distribution}\label{Sec:post_A}
In the training of a neural network, a data set is given 
$\{(\mathbf x_\mu ,\mathbf y_\mu)\}_{\mu=1}^P$, where each input $\mathbf x_\mu \in \mathbb R^{C_0 \times N_0}$ is associated with the corresponding label (or response) $\mathbf y_\mu \in \mathbb R^{C_{L+1} \times N_{L+1}}$.

Within a Bayesian framework, to perform any inferential task, a prior distribution is specified on the parameters $\CW$, denoted by $P_{n,\mathrm{prior}}(d\CW)$, together with a likelihood function describing the observation process. The posterior distribution of the parameters $\CW$ given the observations can then be derived and used for inference.

The \emph{likelihood of the labels given the inputs and the outputs} 
\[
(\mathbf y_{1}, \dots, \mathbf y_P) \longmapsto 
L(\mathbf y_{1}, \dots, \mathbf y_P \mid \mathbf s_{1}, \dots, \mathbf s_{P})
\]
represents the conditional density of the random responses 
$[\mathbf Y_1, \dots, \mathbf Y_P]$ evaluated at the point 
$[\mathbf y_1, \dots, \mathbf y_P]$, given that the network outputs satisfy
$h^{(L+1)}(\mathbf x_1) = \mathbf s_{1}$, $\dots, h^{(L+1)}(\mathbf x_P) = \mathbf s_{P}$.
Both $\mathbf y$ and $\mathbf s$ lie in $\mathbb R^{C_{L+1} \times N_{L+1}}$.
The  posterior distribution 
 of  
$\CW$ is by Bayes theorem 
\begin{equation}\label{posterior_infty} 
P_{n,\mathrm{post}}(d\CW |\by_1,\dots,\by_P) 
:=\frac{ L(\by_1,\dots,\by_P| \bs_{1},\dots,\bs_{P} ) P_{n,\mathrm{prior}}(d\CW) 
}{ \int L(\by_1,\dots,\by_P| \bs_{1},\dots,\bs_{P}) P_{n,\mathrm{prior}}(d\CW)  } 
\end{equation}
where 
$\bs_\mu=h^{(L+1)}(\bx_\mu)=h^{(L+1)}(\bx_\mu|\CW)\in \RE^{D_{L+1}}$ with $\mu=1,\dots,P$.

The prior and posterior distributions on the parameters $\CW$ induce corresponding prior and posterior distributions on any function of $\CW$ of interest. In particular, one can derive a posterior distribution for the network outputs $h^{(L+1)}_{c,i}(\mathbf x)$, as well as for the random covariance tensors $\{\mathbf{K}^{(2,n)}, \dots, \mathbf{K}^{(L+1,n)}\}$.

In analogy to a network trained with a quadratic loss function, we shall consider the Gaussian likelihood
\begin{equation}\label{gaussian_likelihood}
L(\by_{1},\dots, \by_P|\bs_{1},\dots,\bs_P)  =\Big(\frac{\beta}{2\pi}\Big)^{{\frac{\bar D}2}} e^{-\frac{\beta}2\sum_{\mu=1}^P \|\bs_{\mu}-\by_{\mu}\|^2}, 
\end{equation}
with $\beta>0$ and 
 $\bar D:=C_{L+1}D_{L+1}=C_{L+1}N_{L+1}P$. This corresponds to assuming the Gaussian error model 
\begin{equation}\label{stat.model.incomponents}
\begin{split}
& Y_{c,i,\mu}=h^{(L+1)}_{c,i} (\bx_\mu) +\varepsilon_{c,i,\mu}\\
& \varepsilon_{c,i,\mu} \iid \CN(0,\beta^{-1}), \quad  c=1,\dots,C_{L+1},\,\, i=1,\dots,N_{L+1}, \,\, \mu=1,\dots,P.\\
\end{split}
\end{equation}

Let $\mathbf{Y}=\veec (\veec (Y_{1,:,:}),\veec(Y_{2,:,:}),\dots)$ be the observations stacked in a vector and
 denote the law of $\mathbf{K}^{(L+1,n)}$ by  $\mathcal{Q}_n$. 
Recall that $\mathbf{K}^{(L+1,n)}$ 
is organized as a $D_{L+1} \times D_{L+1}$ matrix.  

\begin{lemma}
    Assume the prior in {\rm\ref{A0}} and the Bayesian statistical model \eqref{stat.model.incomponents}.
    The conditional   distribution of $\mathbf{K}^{(L+1,n)}$ given $\bY=\by \in \RE^{\bar D}$ (i.e. its posterior distribution) is 
    \[
    \mathcal{Q}_n(dK|\by)= \frac{e^{-\frac12 \Psi ( K|\by) }\mathcal{Q}_n(dK)  }{\int_{\CS^+_{D_{L+1}}}e^{-\frac12 \Psi ( K| \by) } \mathcal{Q}_n(dK) } 
    \]
    where 
$\Psi ( K|\by)=
\beta \by^\top ( \mathbf{I}_{\bar D }+\beta  \mathbf{I}_{C_{L+1}}\otimes K )^{-1}\by
 +\log(\det(\mathbf{I}_{\bar D} +\beta  \mathbf{I}_{C_{L+1}}\otimes K ))$.
\end{lemma}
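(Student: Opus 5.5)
The plan is to compute the joint law of $(\mathbf{K}^{(L+1,n)},\bY)$ by conditioning on $\CF^{L}$, and then to apply Bayes' formula. The key input is Proposition~\ref{Prop:conditional_cov} with $\ell=L$: conditionally on $\CF^{L}$, the stacked output vector $\mathbf{S}:=\veec(\veec(H^{(L+1)}_{1,:,:}),\dots,\veec(H^{(L+1)}_{C_{L+1},:,:}))$ is Gaussian with law $\CN(\mathbf{0},\mathbf{I}_{C_{L+1}}\otimes \mathbf{K}^{(L+1,n)})$. This is the representation \eqref{vector-rep} with $C=C_{L+1}$. Note that $\mathbf{K}^{(L+1,n)}$ is $\CF^{L}$-measurable, since it is a function of $H^{(L)}$.

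Next, by the error model \eqref{stat.model.incomponents}, $\bY=\mathbf{S}+\boldsymbol{\varepsilon}$ with $\boldsymbol{\varepsilon}\sim\CN(\mathbf{0},\beta^{-1}\mathbf{I}_{\bar D})$, independent of the weights. Hence, conditionally on $\CF^{L}$, the vector $\bY$ is Gaussian with covariance
\[
\Sigma(K):=\beta^{-1}\mathbf{I}_{\bar D}+\mathbf{I}_{C_{L+1}}\otimes K,\qquad K=\mathbf{K}^{(L+1,n)}.
\]
Because this depends on $\CF^L$ only through $K$, the conditional density of $\bY$ given $K$ is
\[
p(\by\mid K)=(2\pi)^{-\bar D/2}\det(\Sigma(K))^{-1/2}\exp\bigl(-\tfrac12\by^\top\Sigma(K)^{-1}\by\bigr).
\]
To make this rigorous, for bounded measurable $f$ on $\psd{D_{L+1}}$ and Borel $B\subset\RE^{\bar D}$, I will write $\E[f(K)\mathbf 1_B(\bY)]=\E[f(K)\,\PP(\bY\in B\mid\CF^L)]$ and insert the Gaussian density. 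This yields the joint law $p(\by\mid K)\,d\by\,\mathcal{Q}_n(dK)$.

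Now rewrite the density. Since $\Sigma(K)=\beta^{-1}(\mathbf{I}_{\bar D}+\beta\,\mathbf{I}_{C_{L+1}}\otimes K)$, we have
\[
\Sigma(K)^{-1}=\beta(\mathbf{I}_{\bar D}+\beta\,\mathbf{I}_{C_{L+1}}\otimes K)^{-1},\qquad
\det\Sigma(K)=\beta^{-\bar D}\det(\mathbf{I}_{\bar D}+\beta\,\mathbf{I}_{C_{L+1}}\otimes K).
\]
It follows that $p(\by\mid K)=(\beta/2\pi)^{\bar D/2}e^{-\frac12\Psi(K\mid\by)}$. Bayes' formula then gives the posterior of $K$ as $p(\by\mid K)\mathcal{Q}_n(dK)$ normalized, and the constant $(\beta/2\pi)^{\bar D/2}$ cancels. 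The denominator is strictly positive and finite because $0<e^{-\Psi/2}\le 1$: indeed $\Psi\ge 0$, as the matrix $\mathbf{I}_{\bar D}+\beta\,\mathbf{I}_{C_{L+1}}\otimes K\succeq\mathbf{I}_{\bar D}$ is invertible with determinant at least $1$.

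There is no serious obstacle here. The only delicate point is the measure-theoretic justification that conditioning on $\CF^L$ reduces to conditioning on $K$, so that the posterior of $\CW$ defined in \eqref{posterior_infty} induces exactly this law on $K$. To settle it, I will integrate the likelihood against the prior and use the tower property. Concretely, $\int L(\by\mid\bs)\,g(K)\,P_{n,\mathrm{prior}}(d\CW)=\E[g(K)\,\E[L(\by\mid\mathbf{S})\mid\CF^L]]$, and the inner conditional expectation equals the Gaussian convolution $p(\by\mid K)$ computed above.
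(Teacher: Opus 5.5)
Your proposal is correct and follows the route the paper intends. The paper omits the proof as "standard Gaussian manipulations" and points to the fully connected analogue in \cite{ABH2026}; your argument carries out exactly that computation: condition on $\CF^L$ via Proposition~\ref{Prop:conditional_cov}, convolve $\CN(\mathbf{0},\mathbf{I}_{C_{L+1}}\otimes K)$ with the independent $\CN(\mathbf{0},\beta^{-1}\mathbf{I}_{\bar D})$ noise, rewrite the density as $(\beta/2\pi)^{\bar D/2}e^{-\frac12\Psi(K\mid\by)}$, and normalize with Bayes. You also handle correctly the reduction from $\CF^L$ to $\sigma(K)$ and the positivity and finiteness of the normalizing constant.
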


The proof is based on standard Gaussian manipulations and it is omitted. 
For the full proof of an  analogous result in the FCNN case, see \cite{ABH2026}. 

Combining  Theorem \ref{prop:gs_ldp0}
with the \textit{contraction principle} (see, e.g., Theorem 4.2.1 in \cite{dembo98}), it follows that 
the sequence $\mathcal{Q}_n$ (i.e. the sequence of laws of $\mathbf{K}^{(L+1,n)}$), satisfies 
a LDP with rate function 
\[
I_{L+1}(Q):= \inf_{Q_2,\dots,Q_{L}}
 I_{2,\dots,L+1}\left(Q_2, \ldots,Q_L, Q\right) . 
\]
In point of fact the same LDP is satisfied by the posterior $\mathcal{Q}_n(dK|\by)$ as stated in the next proposition. 

 \begin{proposition}[Posterior LDP in CNNs]\label{Prop:posterior}
 Under the same assumptions of 
{\rm Theorem \ref{prop:gs_ldp0}}, both  $\mathcal{Q}_n(dK)$ 
 and  $\mathcal{Q}_n(dK|\by)$  satisfy a LDP with rate function $I_{L+1}$ defined above. 
\end{proposition}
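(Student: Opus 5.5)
The prior part is immediate. By Theorem~\ref{prop:gs_ldp0}, the vector $(\mathbf{K}^{(2,n)},\dots,\mathbf{K}^{(L+1,n)})$ satisfies an LDP with good rate function $I_{2,\dots,L+1}$. The projection onto the last coordinate is continuous, so the contraction principle (Theorem 4.2.1 in \cite{dembo98}) gives an LDP for $\mathcal{Q}_n$ with the good rate function $I_{L+1}$.

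For the posterior we use Varadhan-type tilting (Bryc's inverse direction, cf.\ Theorem 4.3.1 and Exercise 4.3.? in \cite{dembo98}): if $\mathcal{Q}_n$ satisfies an LDP with speed $n$ and $F_n$ are tilting functions with $\frac1n F_n\to F$ suitably, then $e^{nF_n}\mathcal{Q}_n/Z_n$ satisfies an LDP with rate $I-F-\inf(I-F)$. Here the tilt is $e^{-\frac12\Psi(K|\by)}$, which does not scale with $n$, so $F\equiv 0$ and the rate is unchanged. It therefore suffices to prove, for every closed $C$ and open $O$,
\[
\limsup_n \tfrac1n\log\int_C e^{-\frac12\Psi}\,d\mathcal{Q}_n\le -\inf_C I_{L+1},\qquad
\liminf_n \tfrac1n\log\int_O e^{-\frac12\Psi}\,d\mathcal{Q}_n\ge -\inf_O I_{L+1},
\]
together with $\frac1n\log\int e^{-\frac12\Psi}d\mathcal{Q}_n\to 0$. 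The normalizing constant limit follows from these two bounds applied with $C=O=\psd{D_{L+1}}$, since $\inf I_{L+1}=0$.

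\textbf{Upper bound.} For $K\in\psd{D_{L+1}}$ the matrix $\mathbf{I}_{\bar D}+\beta\mathbf{I}_{C_{L+1}}\otimes K$ is $\succeq \mathbf{I}$. Hence the quadratic term is $\ge 0$ and the log-determinant is $\ge 0$, so $\Psi\ge 0$ and $e^{-\frac12\Psi}\le 1$. Then $\int_C e^{-\frac12\Psi}d\mathcal{Q}_n\le\mathcal{Q}_n(C)$, and the upper bound follows from the prior LDP.

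\textbf{Lower bound.} This is the main step, since $\log\det$ is unbounded as $\|K\|\to\infty$. We localize. Fix $K_0\in O$ with $I_{L+1}(K_0)<\infty$, and take a small open ball $B\subset O$ around $K_0$ with compact closure. On $B$ the map $\Psi$ is continuous and hence bounded by some $c_B<\infty$, because $K\mapsto(\mathbf I+\beta\mathbf I\otimes K)^{-1}$ and $\log\det$ are continuous on $\psd{}$ where the matrix is invertible. Then
\[
\int_O e^{-\frac12\Psi}d\mathcal{Q}_n\ge e^{-c_B/2}\mathcal{Q}_n(B),
\]
so
\[
\liminf\tfrac1n\log\int_O e^{-\frac12\Psi}\,d\mathcal{Q}_n\ge-\inf_B I_{L+1}\ge -I_{L+1}(K_0).
\]
Optimizing over $K_0\in O$ gives the lower bound.

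Combining the upper and lower bounds, and dividing by the normalizing integral (whose exponential rate is $0$), we obtain the LDP for $\mathcal{Q}_n(dK|\by)$ with the same good rate function $I_{L+1}$. Goodness is inherited from the prior.
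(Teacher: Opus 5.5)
Your proof is correct and follows essentially the route the paper intends. The paper omits the proof and defers to Proposition~4.2 of \cite{ABH2026}: contraction for the prior, then the observation that the $n$-independent tilt $e^{-\frac12\Psi}$ satisfies $e^{-\frac12\Psi}\le 1$ (because $\Psi\ge 0$) and is bounded below on bounded sets, so neither the two bounds nor the normalizing constant change at exponential scale. The only blemish is the appeal to ``Bryc's inverse direction'' and the unfinished ``Exercise 4.3.?'': Bryc's theorem is the converse of Varadhan's lemma, not a tilting result, and the citation is unnecessary because you prove both bounds directly.
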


The proof of the previous proposition is omitted, since based on Theorem \ref{prop:gs_ldp0},
the proof  can be done following the same line as the proof of Proposition 4.2 in \cite{ABH2026}.

In line with the already known results for FCNN proved in  \cite{ABH2026}, the fact that the LDP under the posterior remains the same as under the prior, can be interpreted as  another manifestation of the \emph{laziness} of the infinite-channel asymptotic regime. 

\subsection{LDP for the rescaled network}

Once an LDP for the covariance tensor $\mathbf{K}^{(L+1,n)}$ has been proved, it is easy to deduce an LDP for the network output. Since the network converges in distribution to a Gaussian tensor, in order to have a meaningful LDP one needs to artificially rescale the network,  as done for the FCNN 
in  \cite{MaPaTo24,MacciTrevisanetal}. 
The starting point is \eqref{vector-rep}, which gives
\[  
\hat{\mathbf{H}}^{(L+1)}_n:= \frac{1}{\sqrt{n}} \mathbf{H}^{(L+1)}\stackrel{\CL}{=} 
\sqrt{\mathbf{I}_{C_{L+1}} \otimes \mathbf{K}^{(L+1,n)}}  \frac{\mathbf{Z}}{\sqrt{n}}
=\frac{1}{\sqrt{n}}
\begin{pmatrix}
\sqrt{\mathbf{K}^{(L+1,n)}} Z_1\\
\dots \\
\sqrt{\mathbf{K}^{(L+1,n)}} Z_{C_{L+1}}
\end{pmatrix}	
\]
where $\bZ=(Z_1^\top,\dots,Z_{C_{L+1}}^\top)^\top$ with 
$Z_c \iid \CN(\mathbf{0},\mathbf{I}_{D_{L+1}})$. 
By Theorem \ref{prop:cnn_lln} $\hat{\mathbf{H}}^{(L+1)}_n \stackrel{\PP}{\to}0$ and one can derive an LDP for such a rescaled process. To state the results, following 
\cite{MacciTrevisanetal}, define for every 
$Q \in \CS^+_{D_{L+1}}$ and $Z
\in \RE^{D_{L+1} }$
  the norm induced by $Q^{-{1/2}}$ (in the generalized sense) as follows:
\[
\|Z\|_Q^2=\begin{cases}
\|Q^{1/2}V\|_2^2=V^\top Q V & \text{if } Z=QV \\
+\infty & \text{if } Z \not\in Im(Q).
\end{cases}
\]
Note that the previous functional is also well-defined when $Ker(Q)\not=0$. Indeed, 
if $V$ and $V'$ are such that $QV=QV'=Z$, then 
$\|Q^{1/2}V\|_2^2=\|Q^{1/2}V'\|_2^2$. 

\begin{proposition}[LDP for the network output]\label{prop:rescalednet}
 Under the same assumptions of 
{\rm Theorem \ref{prop:gs_ldp0}}, 
the sequence
    $(\frac{1}{\sqrt{n}} \mathbf{H}^{(L+1)}\!,\mathbf{K}^{(L+1,n)})_{n \geq 1}$ satisfies a LDP with speed $n$ 
and rate function 
\[
\mathbf{J}(Q,Z)=\frac{1}{2}\sum_{c=1}^{C_{L+1}} \|Z_c\|_{Q}^2
+I_{L+1}(Q) \quad Q \in \CS^+_{D_{L+1}}, \,\,\, Z=(Z_1^\top,\dots,Z_C^\top)^\top \in \RE^{C_{L+1} D_{L+1} }.
\]
In particular, $(\hat{\mathbf{H}}^{(L+1)}_n)_{n \geq 1}$
satisfies a LDP with speed $n$ and rate function 
$Z \mapsto \inf_{Q \in  \CS^+_{D_{L+1}}} \mathbf{J}(Q,Z) $.
\end{proposition}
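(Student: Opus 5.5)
We will use the representation \eqref{vector-rep}, namely
\[
\tfrac{1}{\sqrt n}\mathbf{H}^{(L+1)}\stackrel{\CL}{=}\sqrt{\mathbf{I}_{C_{L+1}}\otimes\mathbf{K}^{(L+1,n)}}\,\tfrac{\bZ}{\sqrt n}.
\]
Here $\bZ\sim\CN(\mathbf{0},\mathbf{I}_{\bar D/P\cdot P})$ is independent of $\mathbf{K}^{(L+1,n)}$. This holds because, conditionally on $\CF^{L}$, the output is centered Gaussian with covariance $\mathbf{I}_{C_{L+1}}\otimes\mathbf{K}^{(L+1,n)}$, and the latter is $\CF^L$-measurable. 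Consequently the pair $(\frac1{\sqrt n}\mathbf{H}^{(L+1)},\mathbf{K}^{(L+1,n)})$ has the same law as $\Phi(\mathbf{K}^{(L+1,n)},\bZ/\sqrt n)$, where
\[
\Phi(Q,V):=\bigl((\mathbf{I}_{C_{L+1}}\otimes\sqrt Q)V,\,Q\bigr).
\]
The map $\Phi$ is continuous on $\CS^+_{D_{L+1}}\times\RE^{C_{L+1}D_{L+1}}$, since the matrix square root is continuous on positive semidefinite matrices.

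The first step is an LDP for the pair $(\mathbf{K}^{(L+1,n)},\bZ/\sqrt n)$. Proposition \ref{Prop:posterior} (the prior part), obtained from Theorem \ref{prop:gs_ldp0} by contraction, gives an LDP for $\mathbf{K}^{(L+1,n)}$ with good rate function $I_{L+1}$. By Schilder/Cram\'er, $\bZ/\sqrt n$ satisfies an LDP with speed $n$ and good rate $\frac12\|V\|_2^2$. Since the two components are independent, the product measures satisfy an LDP with the sum rate $I_{L+1}(Q)+\frac12\|V\|^2$. This is standard for independent sequences with good rate functions, e.g.\ via exponential tightness and the product of open rectangles in \cite{dembo98}.

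The second step applies the contraction principle with $\Phi$. The resulting rate is
\[
\mathbf J(Q,Z)=I_{L+1}(Q)+\inf\Bigl\{\tfrac12\textstyle\sum_c\|V_c\|^2 : \sqrt Q\,V_c=Z_c\ \forall c\Bigr\}.
\]
It remains to identify the inner infimum. If $Z_c\notin Im(\sqrt Q)=Im(Q)$, the constraint set is empty and the infimum is $+\infty$. Otherwise the minimal-norm solution is $V_c=(\sqrt Q)^{+}Z_c$. Writing $Z_c=QW$, this gives $\|V_c\|^2=W^\top QW=\|Z_c\|_Q^2$, which matches the stated formula. Goodness of $\mathbf J$ is inherited from the contraction principle. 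The final claim about $\hat{\mathbf H}^{(L+1)}_n$ follows by one more contraction, this time through the continuous projection $(Z,Q)\mapsto Z$.

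The main obstacle is a technical one: we must ensure the distributional identity holds jointly for the pair, not only for the marginal of $\mathbf{H}^{(L+1)}$. Only then does the LDP, which depends on laws alone, transfer. This follows by checking conditional characteristic functions given $\CF^L$. A second point to handle is that the identification of the contracted rate in the degenerate case $Ker(Q)\neq 0$ is done carefully through the pseudo-inverse.
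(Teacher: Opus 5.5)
Your proof is correct, and it follows a different route from the paper in the sense that it actually supplies the argument the paper outsources. The paper takes the LDP for $\mathbf{K}^{(L+1,n)}$ with rate $I_{L+1}$ and says only that the joint LDP follows by repeating Example 2.1.1 of \cite{MacciTrevisanetal}. It then gets the LDP for $\hat{\mathbf{H}}^{(L+1)}_n$ by contraction, which coincides with your last step.

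Your route works as follows. Since $\mathbf{K}^{(L+1,n)}$ is $\mathcal{F}^{L}$-measurable and the output is conditionally $\CN(\mathbf{0},\mathbf{I}_{C_{L+1}}\otimes\mathbf{K}^{(L+1,n)})$, the pair has the law of $\Phi(\mathbf{K}^{(L+1,n)},\bZ/\sqrt n)$ with $\bZ$ independent of $\mathbf{K}^{(L+1,n)}$. The product LDP for independent sequences with good rate functions \cite{dembo98}, followed by one contraction through the continuous map $\Phi$, then gives the joint LDP. Goodness and lower semicontinuity of $\mathbf{J}$ come for free. Your pseudo-inverse computation also correctly produces $\frac12\sum_c\|Z_c\|_Q^2$, including when $Ker(Q)\neq 0$. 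The constraint set is empty off $\mathrm{Im}(\sqrt{Q})=\mathrm{Im}(Q)$, and when $Z_c=QW$ the minimal-norm solution is $\sqrt{Q}\,W$.

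The alternative, in the spirit of the rest of the paper, would treat $\mathcal{L}(\frac{1}{\sqrt n}\mathbf{H}^{(L+1)}\mid\mathbf{K}^{(L+1,n)}=Q)$ as a transition kernel. You would verify the conditional LDP continuity condition of Definition~\ref{def:ldp_cont_cond}: an LDP for $\CN(\mathbf{0},\mathbf{I}_{C_{L+1}}\otimes Q_n/n)$ along every $Q_n\to Q$, and joint lower semicontinuity of $(Q,Z)\mapsto\|Z\|_Q^2$. You would then apply \cite{Cha97} and upgrade the weak LDP using exponential tightness. That route does not require the output to be a continuous image of the covariance and independent noise. When such a representation is available, as here, your contraction argument is shorter and cleaner.
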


\begin{proof} Having shown  in  {\rm Theorem \ref{prop:gs_ldp0}} that $(\mathbf{K}^{(L+1,n)})_{n \geq 1}$ satisfies an LDP with rate function $I_{L+1}$, 
    the proof  of the first part follows repeating the same arguments of Example 2.1.1 in 
\cite{MacciTrevisanetal}. 
The second part follows by the
contraction principle. 
\end{proof}

\subsection{Comparison with the fully connected case}
In a standard fully connected neural network, there is no genuine
spatial structure, but this architecture can be recovered as a special case of our general setting  by taking a
single spatial position, i.e.\ $N_\ell=1$ for all $\ell$, and by
identifying the channels with the neurons.
This should be viewed only as a notational embedding of fully
connected networks into the receptive–field framework, 
in particular the limit $C_\ell \to \infty$ becomes the 
usual infinite width limit. 

Taking $T^{(i,\ell)}(z)=\sigma(z)$  and $M_\ell=1$,
 the pre-activation of neuron $c$ in layer $\ell+1$ is
\[
h_{c}^{(\ell+1)}(\bx)
=
\frac{1}{\sqrt{C_\ell}}
\sum_{c'=1}^{C_\ell}
W^{(\ell)}_{c',c}
\sigma\!\left(h^{(\ell)}_{c'}(\bx)\right).
\]

With this choice, the results in \cite{MaPaTo24,Vog24} for the fully connected case are recovered as particular instances of our Theorem \ref{prop:gs_ldp0}. In addition, 
our Assumption \ref{A2_0BIS} on $T^{(i,\ell)}=\sigma$ is weaker than the corresponding hypotheses in \cite{MaPaTo24,Vog24} as well as in \cite{ABH2026,MacciTrevisanetal}, which address the functional setting.

\section{A general covariance structure}\label{Sec:generalstruc}
The Markov property of $\mathbf{K}^{(\ell+1,C_\ell)}$
 plays a central role in establishing the results discussed in the previous section.
Indeed, these results can be viewed as corollaries of analogous statements formulated in a slightly more general framework, which includes, as a particular case, the random covariance structures characteristic of convolutional neural networks.

By Proposition \ref{Prop:conditional_cov},
it is easy to see that    $\mathbf{K}^{(\ell+1,C_\ell)}$ is a Markov sequence.   
In order to describe its Markov structure, let us introduce  the transition kernel on $
\CB(\CS^+_{N_{\ell+1} P}) \times \CS^+_{N_{\ell} P}$ defined by 
\begin{equation}\label{trans.kern}
\nu_{\ell+1,C_\ell}(B|Q):=\PP \left \{  \frac{1}{  C_{\ell}} \sum_{c=1}^{C_{\ell}} G^{(\ell)} ( \sqrt{Q} Z_{c}^{(\ell)}) 
\in B \right \} 
\end{equation}
where $\sqrt{Q}$ is the usual square root of a symmetric and positive semidefinite matrix $Q$
and $Z_{c}^{(\ell)} \stackrel{iid}{\sim} \CN(\mathbf{0},\mathbf{I}_{N_{\ell} P})$ for $c \geq 1$.

\begin{proposition} Under  {\rm \ref{A0}},
   the sequence  of random matrices  $\mathbf{K}^{(\ell+1,C_\ell)}$  with  $\ell = 0,\ldots,L$ is a Markov chain on 
    $\CS^+_{N_{1} P} \times \dots \times \CS^+_{N_{L+1} P}$ with transition kernels 
    $\nu_{\ell+1,C_\ell}$ defined in \eqref{trans.kern}, that is 
    \[
   \PP\{ \mathbf{K}^{(\ell+1,C_\ell)} \in B | \mathbf{K}^{(\ell,C_{\ell-1})}=Q\}=\nu_{\ell+1,C_\ell}(B|Q)
    \]
    for every $B \in \CB(\CS^+_{N_{\ell+1} P})$ and $Q \in  \CS^+_{N_{\ell} P}$ and  
$[\mathbf{K}^{(1,C_0)}_{i,\mu;j,\nu}  ]:= [ \CK_{i,j}^{(1,C_0)} (\bx_\mu,\bx_\nu)] $ specified in 
{\rm Proposition \ref{Prop:conditional_cov}} as initial condition. 
    \end{proposition}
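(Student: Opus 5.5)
The plan is to derive the Markov property from Proposition \ref{Prop:conditional_cov} together with the identity
\[
\mathbf{K}^{(\ell+1,C_\ell)}=\frac{1}{C_\ell}\sum_{c=1}^{C_\ell} G^{(\ell)}(H^{(\ell)}_{c,:,:}),
\]
so that the covariance at layer $\ell+1$ is a deterministic function of $H^{(\ell)}$. The conditioning will be done through the filtration $\CF^{\ell-1}$, and the tower property will then reduce everything to conditioning on $\mathbf{K}^{(\ell,C_{\ell-1})}$.

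\textbf{Step 1: conditional law of $H^{(\ell)}$.} Fix $\ell\ge 1$. By Proposition \ref{Prop:conditional_cov}, applied at level $\ell-1$, conditionally on $\CF^{\ell-1}$ the vectors $\veec(H^{(\ell)}_{c,:,:})$, $c=1,\dots,C_\ell$, are jointly Gaussian with zero mean. The covariance is $\delta_{c,c'}\mathbf{K}^{(\ell,C_{\ell-1})}$ in the flattened index $(i,\mu)$. Since uncorrelated jointly Gaussian vectors are independent, the channel slices are conditionally i.i.d.\ $\CN(\mathbf{0},\mathbf{K}^{(\ell,C_{\ell-1})})$.

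Such a vector has the same law as $\sqrt{Q}Z_c^{(\ell)}$ evaluated at $Q=\mathbf{K}^{(\ell,C_{\ell-1})}$, with $Z_c^{(\ell)}$ i.i.d.\ standard Gaussian and independent of $\CF^{\ell-1}$. This equality in law also holds for singular $Q$, because $\sqrt{Q}\sqrt{Q}^\top=Q$.

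For $\ell=0$ the claim concerns only the initial condition. The input is deterministic, so $\mathbf{K}^{(1,C_0)}$ is the fixed matrix in the statement, and the law of $\mathbf{K}^{(2,C_1)}$ follows from the same computation with $\CF^0$ trivial. One small point: the definition of $\mathbf{K}^{(1,C_0)}$ uses $M_1$ and $\CM_1$, apparently a typo for $M_0$ and $\CM_0$; the index convention will be harmonised before starting.

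\textbf{Step 2: regular conditional probability.} For $B\in\CB(\CS^+_{N_{\ell+1}P})$ we have
\[
\PP\{\mathbf{K}^{(\ell+1,C_\ell)}\in B\mid \CF^{\ell-1}\}=\nu_{\ell+1,C_\ell}(B\mid \mathbf{K}^{(\ell,C_{\ell-1})})\quad \text{a.s.}
\]
This comes from the standard freezing lemma for a measurable function $\Phi(Y,Z)=\mathbf{1}_B\bigl(\tfrac1{C_\ell}\sum_c G^{(\ell)}(\sqrt{Y}Z_c)\bigr)$. Here $Y=\mathbf{K}^{(\ell,C_{\ell-1})}$ is $\CF^{\ell-1}$-measurable and $Z$ is independent of $\CF^{\ell-1}$; equivalently, one can argue directly from the conditional Gaussian description.

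The main obstacle, which is mild, is measurability. It requires that $Q\mapsto\nu_{\ell+1,C_\ell}(B\mid Q)$ is a genuine kernel and that the freezing lemma applies. Continuity of $Q\mapsto \sqrt Q$ on $\CS^+$ and continuity of $T^{(i,\ell)}$, hence of $G^{(\ell)}$ (Assumption \ref{A2_0}), make $(Q,z)\mapsto \tfrac1{C_\ell}\sum_c G^{(\ell)}(\sqrt Q z_c)$ jointly continuous. Fubini then gives Borel measurability in $Q$. If only measurability of $T$ is assumed, Borel measurability still suffices.

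\textbf{Step 3: Markov property.} Every $\mathbf{K}^{(r,C_{r-1})}$ with $r\le\ell$ is $\CF^{r-1}\subseteq\CF^{\ell-1}$-measurable; for $r=1$ it is deterministic. Conditioning the identity of Step 2 on $\sigma(\mathbf{K}^{(1,C_0)},\dots,\mathbf{K}^{(\ell,C_{\ell-1})})$, the tower property shows that the conditional law of $\mathbf{K}^{(\ell+1,C_\ell)}$ given the entire past of the chain is $\nu_{\ell+1,C_\ell}(\cdot\mid\mathbf{K}^{(\ell,C_{\ell-1})})$. In particular it depends only on the present state. This is the Markov property with the stated kernels, and it gives the displayed formula for $\PP$-a.e.\ value $Q$, i.e.\ as a version of the conditional law.
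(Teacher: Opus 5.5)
Your proposal is correct and follows the same route as the paper, which only sketches the argument as ``easy to see'' from Proposition~\ref{Prop:conditional_cov}: given $\CF^{\ell-1}$ the channel slices $H^{(\ell)}_{c,:,:}$ are i.i.d.\ $\CN(\mathbf{0},\mathbf{K}^{(\ell,C_{\ell-1})})$, $\mathbf{K}^{(\ell+1,C_\ell)}=\frac{1}{C_\ell}\sum_{c} G^{(\ell)}(H^{(\ell)}_{c,:,:})$, and the tower property (using $\sigma(\mathbf{K}^{(1,C_0)},\dots,\mathbf{K}^{(\ell,C_{\ell-1})})\subseteq\CF^{\ell-1}$) yields the kernel $\nu_{\ell+1,C_\ell}$. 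You are also right that $M_1,\CM_1$ in the definition of $\CK^{(1,C_0)}$ should read $M_0,\CM_0$.
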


As a generalization of the sequence of covariance matrices 
\((\mathbf{K}^{(1,n)}, \dots, \mathbf{K}^{(L+1,n)})\), 
let us consider a Markov sequence taking values in 
\(\mathcal{S}^+_{D_{L+1}}\), 
with a transition kernel of the same form as in~\eqref{trans.kern}. 
In this more general setting, the specific functions \(G^{(\ell)}\)  in \eqref{def.Gell} 
are replaced by measurable mappings
\[
\mathcal{G}^{(\ell)} : \mathbb{R}^{D_{\ell}} \longrightarrow 
\mathcal{S}^+_{D_{\ell+1}}, 
\qquad \ell = 0, \dots, L.
\]

\begin{definition}\label{def:general_structure} 
For   $\ell = 0,\ldots,L$,  let $Z_{c}^{(\ell)}  \stackrel{iid}{\sim} \CN(\mathbf{0},\mathbf{I}_{D_\ell})$ with $c \geq 1$. 
Given $n \geq 1$, let  $\mathbf{K}^{(1,n)} \allowbreak \dots,\mathbf{K}^{(L+1,n)}$  be a Markov 
    sequence  of random matrices   on 
    $\CS^+_{D_{1}} \times \dots \times \CS^+_{D_{L+1}}$ with transition kernels 
given by
\begin{equation}\label{trans.kern2}
\nu_{\ell+1,n}(B|Q):=\PP \left \{  \frac{1}{  C_{\ell}(n)} \sum_{c=1}^{C_{\ell}(n)} \CG^{(\ell)} ( \sqrt{Q} Z_{c}^{(\ell)} ) 
\in B \right \} ,  \, B \in \CB(\CS^+_{D_{\ell+1}}), \, Q \in  \CS^+_{D_{\ell}}
\end{equation}
that is $\PP\{ \mathbf{K}^{(\ell+1,n)} \in B | \mathbf{K}^{(\ell,n)}=Q\}=\nu_{\ell+1,C_\ell}(B|Q)$
and initial condition 
    $\mathbf{K}^{(1,n)}=\mathbf{K}^{(1)}$, $\mathbf{K}^{(1)}$ being a deterministic fixed matrix
    {\rm(}independent from $n \geq 1$ {\rm)}.
\end{definition}

Recalling that    $\| \cdot \|_2$ denotes the Euclidean norm,  
    with a slight abuse of notation, we use the same symbol  also for the corresponding 
     matrix norm $\|Q\|_2=\sup_{x: \|x\|_2=1} \|Qx\|_2$. 
    We shall also need the Frobenius norm of a matrix,  
    defined as 
$\| Q\|_F:=\sqrt{\tr(Q^\top Q)}$.

\begin{remark}[Matrix norms]\label{matrixnorm}
Let us recall some useful properties of these matrix norms. 
    \begin{itemize}
    \item[(a)] for any $D \times D$ matrix $Q$ and any vector $x \in \R^D$, it holds that $\|Qx\|_2 \leq \|Q\|_2 \, \|x\|_2$;
  \item[(b)]  for any $Q \in \psd{D}$ one has 
$\|\sqrt{Q}\|_2 = \sqrt{\|Q\|_2}$;    \item[(c)] for any matrix $Q$, one has $\| Q\|_F^2=\sum_{km}Q_{km}^2$;
    \item[(d)] 
 $\|Q\|_2\leq \| Q\|_F \leq D\|Q\|_2$. 
\end{itemize}
\end{remark}

We shall consider the following  assumptions.

\begin{assumption}\label{A2} 
For every $\ell=0,\dots,L$, 
the  functions  $\CG^{(\ell)}$ are continuous and exponentially  bounded with degree $r <2$, i.e. 
\[
\| \CG^{(\ell)}(z) \|_F  \leq Ae^{B\|z\|_2^{r}} \quad  \forall z \in  \RE^{D_{\ell} }.
\]
\end{assumption}

\begin{assumption}\label{A2bis} For every $\ell=0,\dots,L$, $\CG^{(\ell)}$ is continuous and satisfies the following condition: 
for every $\varepsilon>0$ there is $C_\varepsilon>0$  such that for every $z,z' \in \RE^{D_\ell}$
\begin{equation}\label{A2bisbis}
     \left\|\CG^{(\ell)}(z)-\CG^{(\ell)}(z')\right\|_F
 \leq C_\varepsilon \left[1+\|z-z'\|_2 (\|z\|_2+\|z'\|_2) \right] 
 +\varepsilon(\|z\|_2^2+\|z'\|_2^2) .
\end{equation}
In particular, the $\CG^{(\ell)}$'s are  polynomially bounded with degree $2$, i.e.  
\begin{equation}\label{poly2G}
    \| \CG^{(\ell)}(z) \|_F  \leq A(1+ \|z\|^2_2) \quad  \forall z \in  \RE^{D_{\ell} }.
    \end{equation}
\end{assumption}

We stress the fact that the $\CG^{(\ell)}$ are not assumed to have the specific form  of the $G^{(\ell)}$ given  in \eqref{def.Gell}. On the other hand, under 
  \ref{A2_0} --\ref{A2_0BIS}, respectively-- 
the function $G^{(\ell)}$ defined by \eqref{def.Gell}
 satisfies  \ref{A2} --\ref{A2bis}, respectively-- as stated in the next Lemma.

 \begin{lemma}\label{lemma:fromCNNtoGeneral}  Let 
the function $G^{(\ell)}$ be defined by \eqref{def.Gell}. 
\begin{enumerate} 
 \item If 
{\rm\ref{A2_0}}  holds then $\CG^{(\ell)}=G^{(\ell)}$ 
 satisfies  {\rm\ref{A2}}. 
 \item  If {\rm\ref{A2_0BIS}} holds then $\CG^{(\ell)}=G^{(\ell)}$  satisfies {\rm\ref{A2bis}}. 
\end{enumerate}
\end{lemma}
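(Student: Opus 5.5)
Both parts reduce to entrywise bounds on the finitely many components of $G^{(\ell)}(z)$. By Remark~\ref{matrixnorm}(c), $\|G^{(\ell)}(z)\|_F^2$ is a sum of $(N_{\ell+1}P)^2$ squared entries, so it suffices to bound each entry
\[
[G^{(\ell)}(z)]_{i,\mu;j,\nu}=\frac{1}{\lambda_\ell M_\ell}\sum_{m} T^{(i,\ell)}_m(z_{:,\mu})\,T^{(j,\ell)}_m(z_{:,\nu}),
\]
and then absorb the dimensional constants. Continuity of $G^{(\ell)}$ is immediate in both parts, since products and sums of the continuous functions $T^{(i,\ell)}_m$ are continuous. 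Throughout we use $\|z_{:,\mu}\|_2\le\|z\|_2$ for $z\in\RE^{N_\ell P}$.

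For part (1), Assumption~\ref{A2_0} gives
\[
|T^{(i,\ell)}_m(z_{:,\mu})\,T^{(j,\ell)}_m(z_{:,\nu})|\le A_T^2 e^{2B_T\|z\|_2^r}.
\]
Each entry is therefore bounded by $\lambda_\ell^{-1}A_T^2e^{2B_T\|z\|_2^r}$. Summing over the entries gives $\|G^{(\ell)}(z)\|_F\le A e^{B\|z\|_2^r}$ with $A=N_{\ell+1}P\lambda_\ell^{-1}A_T^2$ and $B=2B_T$, with the same $r<2$.

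For part (2), first derive linear growth of $T$. Taking $z'=0$ in Assumption~\ref{A2_0BIS} gives
\[
|T_m(z)|\le |T_m(0)|+\rho_T(0)+L_T\|z\|_2+\rho_T(z).
\]
Since $\rho_T$ is locally bounded and $o(\|z\|_2)$, we get $|T_m(z)|\le a+b\|z\|_2$. Squaring this bound yields \eqref{poly2G}. For \eqref{A2bisbis}, write $T=T^{(i,\ell)}_m(z_{:,\mu})$, $S=T^{(j,\ell)}_m(z_{:,\nu})$, and primed versions at $z'$, and use
\[
TS-T'S'=(T-T')S+T'(S-S').
\]
Each term has the form (difference)$\times$(value), bounded by
\[
\bigl(L_T\|z-z'\|_2+\rho_T(z)+\rho_T(z')\bigr)\bigl(a+b\|z\|_2+b\|z'\|_2\bigr).
\]
The linear part $L_T\|z-z'\|_2(a+b\|z\|_2+b\|z'\|_2)$ is bounded by $C[1+\|z-z'\|_2(\|z\|_2+\|z'\|_2)]$ plus a cross term $a\|z-z'\|_2$. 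That cross term is controlled by $\|z-z'\|_2\le \|z\|_2+\|z'\|_2$ and Young's inequality, giving $\varepsilon(\|z\|_2^2+\|z'\|_2^2)+C_\varepsilon$.

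The main obstacle is the $\rho_T$ terms, e.g.\ $\rho_T(z)\,\|z'\|_2$. Given $\delta>0$, choose $R_\delta$ such that $\rho_T(y)\le\delta\|y\|_2$ for $\|y\|_2\ge R_\delta$, and let $c_\delta=\sup_{\|y\|_2\le R_\delta}\rho_T(y)<\infty$, so that
\[
\rho_T(y)\le c_\delta+\delta\|y\|_2 .
\]
Then
\[
\rho_T(z)\bigl(a+b\|z\|_2+b\|z'\|_2\bigr)\le (c_\delta+\delta\|z\|_2)\bigl(a+b\|z\|_2+b\|z'\|_2\bigr).
\]
After Young's inequality, this is at most $C'_\delta+\bigl(C\delta+\eta\bigr)(\|z\|_2^2+\|z'\|_2^2)$, where the terms $c_\delta b\|z\|_2$ are handled by $c_\delta b\|z\|_2\le \eta\|z\|_2^2+c_\delta^2b^2/(4\eta)$. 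Choosing $\delta$ and $\eta$ small relative to $\varepsilon$ gives the $\varepsilon(\|z\|_2^2+\|z'\|_2^2)$ term with a constant $C_\varepsilon$. The same applies to the $\rho_T(z')$ terms. Summing over $m$ and over the finitely many entries, and rescaling $\varepsilon$ by the dimensional constant, gives \eqref{A2bisbis}.
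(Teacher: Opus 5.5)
Your proposal is correct and follows essentially the paper's own argument. The shared steps are: entrywise bounds on $G^{(\ell)}$; the splitting $(T-T')S+T'(S-S')$; linear growth of $T$ from \ref{A2_0BIS}; the estimate $\rho_T(y)\le c_\delta+\delta\|y\|_2$, which is the paper's \eqref{eq:sigma_eps_bound}; and Young's inequality to absorb the remaining linear terms into $\varepsilon(\|z\|_2^2+\|z'\|_2^2)+C_\varepsilon$.
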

 The proof is reported in Appendix \ref{AppendixA}.

We shall use several times  the following simple bound, 
which is a  consequence of Remark \ref{matrixnorm}.

\begin{lemma}\label{ineqNormaG} 
If {\rm \ref{A2}}  holds, then 
for every $z \in \RE^{D_\ell}$
\begin{equation*}
\sup_{Q \in \CS^+_{D_{\ell+1}}: \|Q\|_F \leq M }   \left\lnorm 
    \CG^{(\ell)} \left(
    \sqrt{Q} z
    \right)\right\rnorm_2
    \leq
    A  \left( 1 + M \left\|z \right\|_2^{2}
    \right).
\end{equation*}
If {\rm\ref{A2bis}} holds, then 
\begin{equation*}
\sup_{Q \in \CS^+_{D_{\ell+1}}: \|Q\|_F \leq M }   \left\lnorm 
    \CG^{(\ell)} \left(
    \sqrt{Q} z
    \right)\right\rnorm_2
    \leq
    A  e^{B M^{r/2} \left\|z \right\|_2^{r}}.
\end{equation*}
\end{lemma}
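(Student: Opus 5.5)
The plan is a direct chain of norm inequalities from Remark \ref{matrixnorm}, followed by the growth bound on $\CG^{(\ell)}$. One bookkeeping point first: the exponential bound comes from \ref{A2} and the polynomial bound from \ref{A2bis}. The two hypotheses in the statement should therefore be read swapped relative to the displayed conclusions. Also, since $\sqrt{Q}z$ must lie in the domain of $\CG^{(\ell)}$, the supremum is over $Q\in\psd{D_\ell}$.

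\emph{Step 1: control the argument.} Fix $z\in\RE^{D_\ell}$ and $Q\in\psd{D_\ell}$ with $\|Q\|_F\le M$. By Remark \ref{matrixnorm}(a), then (b), then (d),
\[
\|\sqrt{Q}z\|_2\le \|\sqrt{Q}\|_2\,\|z\|_2=\sqrt{\|Q\|_2}\,\|z\|_2\le \sqrt{\|Q\|_F}\,\|z\|_2\le M^{1/2}\|z\|_2 .
\]
This is the only place where the constraint on $Q$ enters. The resulting bound is uniform in $Q$, which is what allows the supremum to be taken at the end.

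\emph{Step 2: control the output.} By Remark \ref{matrixnorm}(d), $\|\CG^{(\ell)}(\sqrt{Q}z)\|_2\le\|\CG^{(\ell)}(\sqrt{Q}z)\|_F$, so it suffices to bound the Frobenius norm.

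Under \ref{A2bis}, the polynomial bound \eqref{poly2G} together with Step 1 gives
\[
\|\CG^{(\ell)}(\sqrt{Q}z)\|_F\le A\bigl(1+\|\sqrt{Q}z\|_2^2\bigr)\le A\bigl(1+M\|z\|_2^2\bigr).
\]
Under \ref{A2}, monotonicity of $t\mapsto e^{Bt^r}$ on $[0,\infty)$ together with Step 1 gives
\[
\|\CG^{(\ell)}(\sqrt{Q}z)\|_F\le A e^{B\|\sqrt{Q}z\|_2^r}\le A e^{BM^{r/2}\|z\|_2^r}.
\]
Both right-hand sides are independent of $Q$, so taking the supremum over $\{Q:\|Q\|_F\le M\}$ yields the claims.

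There is no real obstacle here; the argument is routine. The only point needing care is to pass through the operator norm of $\sqrt{Q}$ via Remark \ref{matrixnorm}(b) rather than its Frobenius norm. Using $\|\sqrt{Q}\|_F=\sqrt{\tr Q}$ would introduce an unnecessary dimension-dependent factor.
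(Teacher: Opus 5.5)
Your proof is correct and is the argument the paper intends. The paper gives no separate proof, saying only that the lemma follows from Remark~\ref{matrixnorm}; your chain $\|\sqrt{Q}z\|_2\le\sqrt{\|Q\|_2}\,\|z\|_2\le M^{1/2}\|z\|_2$, followed by $\|\cdot\|_2\le\|\cdot\|_F$ and the growth bounds, is exactly that consequence.

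You are also right that the statement swaps its two hypotheses: the exponential bound comes from Assumption~\ref{A2} and the quadratic bound from Assumption~\ref{A2bis}. Likewise $Q$ should range over $\psd{D_\ell}$. Your corrected reading is the one the paper actually relies on later, in Lemmas~\ref{lemma:triangle_lln} and~\ref{lemmatight}.
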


\begin{remark}\label{continuitySqrt} 
 The principal square root map on the cone of symmetric positive semidefinite matrices $\psd{D}$ is Hölder continuous of order $1/2$ with respect to the operator norm, see 
 e.g. Theorem V.1.9 in \cite{bhatia97}. Hence, 
if $Q_{1,n} \to Q$ in  $\CS^+_{D}$  (in any matrix norm), then $\sqrt{Q_{1,n}} \to \sqrt{Q}$.  
\end{remark}

\section{Proofs}\label{Sec:proofs}

The first subsection is devoted to the proof of Theorem~\ref{prop:cnn_lln}, whereas the remaining subsections develop the proof of Theorem~\ref{prop:gs_ldp0}.

\subsection{Proof of Theorem \ref{prop:cnn_lln}}
\label{Sec:Proofcnnlln}
 The proof of Theorem \ref{prop:cnn_lln} relies on the following two lemmas, whose proofs are provided in Appnedix \ref{AppendixA}.

\begin{lemma}\label{lemma:triangle_lln} 
     Assume {\rm \ref{A2}}.
    Let $\{Q_n\}_n\subset\CS_{D_\ell}^+$ such that $Q_n\to Q\in\CS_{D_\ell}^+$, and take $Z,Z_{1},Z_2,\ldots  \iid \CN(\mathbf{0},\id_{D_\ell})$. Then
    \begin{equation*}
        \frac{1}{C_\ell(n)}\sum_{c=1}^{C_\ell(n)}\CG^{(\ell)}\left(\sqrt{Q_n}Z_c \right) \stackrel{\PP}{\rightarrow} \E\left[\CG^{(\ell)}\left(\sqrt{Q}Z\right)\right].
    \end{equation*}
\end{lemma}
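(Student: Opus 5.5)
The plan is to prove the statement as a weak law of large numbers for a triangular array, controlled through second moments. Write $C=C_\ell(n)$ and set
\[
Y_{n,c}:=\CG^{(\ell)}(\sqrt{Q_n}Z_c),\qquad c=1,\dots,C.
\]
For fixed $n$ these are i.i.d.\ with mean $m_n:=\E[\CG^{(\ell)}(\sqrt{Q_n}Z)]$. I would split
\[
\frac1C\sum_{c=1}^{C}Y_{n,c}-\E\bigl[\CG^{(\ell)}(\sqrt Q Z)\bigr]
=\Bigl(\frac1C\sum_{c=1}^{C} Y_{n,c}-m_n\Bigr)+\Bigl(m_n-\E\bigl[\CG^{(\ell)}(\sqrt QZ)\bigr]\Bigr)
\]
and show that the first (random) term tends to $0$ in probability and the second (deterministic) term tends to $0$.

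\emph{Uniform moment bound.} Since $Q_n\to Q$, there is $M$ with $\|Q_n\|_F\le M$ for all $n$. By Lemma~\ref{ineqNormaG}, under \ref{A2} we get $\|\CG^{(\ell)}(\sqrt{Q_n}z)\|\le A e^{BM^{r/2}\|z\|_2^{r}}$ uniformly in $n$; norm equivalence lets us pass between $\|\cdot\|_2$ and $\|\cdot\|_F$ at the cost of a constant. Because $r<2$, the function $e^{2BM^{r/2}\|z\|^r}$ is integrable against the standard Gaussian density. Hence $\sup_n \E\|Y_{n,1}\|_F^2<\infty$, and the same domination holds along the whole sequence.

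\emph{Random term.} By Chebyshev's inequality applied entrywise (or with the Frobenius norm), and by independence across $c$,
\[
\PP\Bigl\{\Bigl\|\frac1C\sum_{c=1}^{C}Y_{n,c}-m_n\Bigr\|_F>\eps\Bigr\}\le \frac{\E\|Y_{n,1}\|_F^2}{C\,\eps^2}.
\]
This tends to $0$ since $C_\ell(n)\to\infty$ by \ref{A1}.

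\emph{Deterministic term.} By Remark~\ref{continuitySqrt}, $\sqrt{Q_n}\to\sqrt Q$, so $\sqrt{Q_n}Z\to\sqrt QZ$ pointwise. Continuity of $\CG^{(\ell)}$ then gives $Y_{n,1}\to\CG^{(\ell)}(\sqrt QZ)$ almost surely. The dominating function $Ae^{BM^{r/2}\|Z\|^r}$ from the first step is integrable, so dominated convergence yields $m_n\to \E[\CG^{(\ell)}(\sqrt QZ)]$.

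The main point is the uniform integrability supplied by the exponential growth bound with $r<2$. The remaining steps are routine. One caveat is the order of the two statements in Lemma~\ref{ineqNormaG}: the bound used above is the exponential one, matching \ref{A2}, and it holds with $\|\cdot\|_2$ on matrices up to constants.
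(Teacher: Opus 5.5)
Your proof is correct, but it is organized differently from the paper's.

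The paper couples the two averages through the same Gaussian vectors. It writes $\frac1n\sum_c \CG^{(\ell)}(\sqrt{Q_n}Z_c)=\frac1n\sum_c\CG^{(\ell)}(\sqrt{Q}Z_c)+\Delta_n$ and applies the strong law of large numbers to the fixed-$Q$ average, which needs only $\E\|\CG^{(\ell)}(\sqrt Q Z)\|_F<\infty$. It then shows $\Delta_n\to0$ in $L^1$ via $\E\|\Delta_n\|\le \E\|\CG^{(\ell)}(\sqrt{Q_n}Z)-\CG^{(\ell)}(\sqrt{Q}Z)\|$, using dominated convergence with the same dominating function $2Ae^{BM^{r/2}\|Z\|_2^r}$ that you use.

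You instead center each row at its own mean $m_n$ and treat the sum as a triangular array. Chebyshev with the uniform second-moment bound controls the fluctuation, and dominated convergence gives $m_n\to\E[\CG^{(\ell)}(\sqrt QZ)]$.

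What each route buys:
\begin{itemize}
\item The paper's argument needs only uniform first-moment domination, so it would survive under weaker integrability than $r<2$ provides. It does rely on the coupling, i.e.\ on the $Z_c$ being one fixed sequence for every $n$.
\item Your argument costs a second moment, which is free here because $r<2$ makes $e^{2BM^{r/2}\|Z\|_2^r}$ Gaussian-integrable. In return it gives an explicit $O(1/C_\ell(n))$ bound on the fluctuation probability.
\item Your argument also applies verbatim to a genuine triangular array with independently drawn rows. That is exactly the purely distributional statement about $\nu_{\ell+1,n}(\cdot\mid Q_n)$ needed when the lemma is fed into Lemma~\ref{lemma:bridge-lln}.
\end{itemize}

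You are also right that the two bounds in Lemma~\ref{ineqNormaG} are attached to the wrong assumptions. The exponential bound is the one that follows from \ref{A2}, and it holds directly in $\|\cdot\|_F$ because $\|\sqrt{Q_n}z\|_2\le \|Q_n\|_2^{1/2}\|z\|_2\le M^{1/2}\|z\|_2$.
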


\begin{lemma} \label{lemma:bridge-lln}
Let $\{(X_n,Y_n)\}_n$ be a sequence of random variables taking values in two Polish  spaces $\XX$ and $\YY$, respectively. 
Let $\nu_n(dy|x)$ be a version of  the conditional distribution   $\mathcal{L}(Y_n \mid X_n = x)$.  Suppose that:
(1) There exists a constant $x_0\in\XX$ such that $X_n \xrightarrow{\PP} x_0$.
   (2) There exists a constant $y_0 \in \YY$ such that for any sequence $\{x_n\}_{n} \subseteq \XX$ with $x_n \to x_0$,
   $\nu_n(dy|x_n)$ converges weakly to $\delta_{y_0}$.
Then, it follows that $Y_n$ converges in probability to $y_0$.
\end{lemma}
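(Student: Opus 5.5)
We prove Lemma~\ref{lemma:bridge-lln} by reducing convergence in probability to a statement about bounded continuous test functions, and then controlling the conditional expectation uniformly on a shrinking neighbourhood of $x_0$. Since $y_0$ is a constant, $Y_n\convp y_0$ is equivalent to $Y_n\convd\delta_{y_0}$. By the portmanteau theorem it therefore suffices to show that $\E[f(Y_n)]\to f(y_0)$ for every bounded continuous $f:\YY\to\RE$. Fix such an $f$ and write
\[
\E[f(Y_n)]=\E\big[\phi_n(X_n)\big],\qquad \phi_n(x):=\int_{\YY} f(y)\,\nu_n(dy|x).
\]
Here $\phi_n$ is measurable because $\nu_n$ is a kernel, and $|\phi_n|\le\|f\|_\infty$.

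The key step is to upgrade hypothesis (2), which is sequential, to a uniform statement near $x_0$. Let $d$ be the metric on $\XX$. We claim that for every $\varepsilon>0$ there exist $\delta>0$ and $n_0$ such that
\[
\sup_{x:\,d(x,x_0)<\delta}|\phi_n(x)-f(y_0)|\le\varepsilon \qquad\text{for all } n\ge n_0 .
\]
Suppose the claim fails. Then there are $\varepsilon>0$, indices $n_k\uparrow\infty$ and points $x_k$ with $d(x_k,x_0)<1/k$ such that $|\phi_{n_k}(x_k)-f(y_0)|>\varepsilon$. Define a full sequence $(x'_n)$ by setting $x'_{n_k}=x_k$ and $x'_n=x_0$ for all other $n$. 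Then $x'_n\to x_0$, so by (2) we get $\phi_n(x'_n)\to f(y_0)$, which contradicts the choice of the $x_k$ along the subsequence $n_k$. This is the only delicate point: hypothesis (2) is stated for full sequences, so the padding construction is needed, and it is also where we use that the limit $y_0$ does not depend on the sequence. Measurability of the set over which the supremum is taken plays no role, since we only use the bound pointwise.

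To conclude, split the expectation according to whether $X_n$ is close to $x_0$:
\[
|\E[f(Y_n)]-f(y_0)|\le \E\big[|\phi_n(X_n)-f(y_0)|\,\mathbf 1\{d(X_n,x_0)<\delta\}\big]+2\|f\|_\infty\,\PP\{d(X_n,x_0)\ge\delta\}.
\]
For $n\ge n_0$ the first term is at most $\varepsilon$ by the claim. By hypothesis (1), $\PP\{d(X_n,x_0)\ge\delta\}\to0$, so the second term vanishes as $n\to\infty$. Hence $\limsup_n|\E f(Y_n)-f(y_0)|\le\varepsilon$, and since $\varepsilon>0$ was arbitrary, $\E f(Y_n)\to f(y_0)$. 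Therefore $Y_n\convd\delta_{y_0}$, that is, $Y_n\convp y_0$.
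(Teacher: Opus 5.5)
Your proof is correct, but it takes a different route from the paper. The paper fixes $\phi\in C_b(\YY)$ and sets $g_n(x)=\int\phi(y)\,\nu_n(dy|x)$ (your $\phi_n$). It takes an arbitrary subsequence and extracts a further subsequence along which $X_{n_{k_j}}\to x_0$ almost surely. It then applies hypothesis (2) pathwise to get $g_{n_{k_j}}(X_{n_{k_j}})\to\phi(y_0)$ a.s., passes to expectations by dominated convergence using $|g_n|\le\|\phi\|_\infty$, and concludes with the sub-subsequence principle.

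You instead upgrade the sequential hypothesis (2) to uniform convergence of $\phi_n$ on a ball around $x_0$, via the contradiction argument. You then split $\E[\phi_n(X_n)]$ on $\{d(X_n,x_0)<\delta\}$ and its complement. This avoids almost-sure subsequences, dominated convergence and the subsequence principle. The cost is the contradiction step, where you must build the padded sequence.

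That padding is in fact also implicitly needed in the paper. Hypothesis (2) is formulated for sequences indexed by all $n$, while the paper applies it along $n_{k_j}$ ``by the arbitrariness of the sequences in hypothesis (2)''. Strictly, this requires filling the remaining indices with $x_0$ exactly as you do, so your write-up makes explicit a step the paper leaves tacit.

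Your decomposition also yields the explicit bound $|\E f(Y_n)-f(y_0)|\le\varepsilon+2\|f\|_\infty\,\PP\{d(X_n,x_0)\ge\delta\}$ for $n\ge n_0$. The paper's purely qualitative argument does not provide such a bound.
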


Having established these two results, we can prove the main result of this subsection.

\begin{proposition}[General covariance concentration] 
\label{prop:gs_lln}
Let Assumptions {\rm\ref{A1}} and {\rm\ref{A2}}  hold. 
Let $\mathbf{K}^{(1,n)}, \ldots, \mathbf{K}^{(L+1,n)}$ be as in {\rm Definition \ref{def:general_structure}}. Then, 
as $n \to +\infty$, 
\begin{equation*}
    \left(
     \mathbf{K}^{(2,n)},\ldots, \mathbf{K}^{(L+1,n)}
    \right) 
    \stackrel{\PP}{\to}
    \left(
     \mathbf{K}^{(2)},\ldots, \mathbf{K}^{(L+1)}
    \right),
\end{equation*}
where the limit is defined recursively for $\ell = 1,\ldots,L$ as 
\begin{equation*}
     \mathbf{K}^{(\ell+1)} = 
  \E\left[
    \CG^{(\ell)} \left( \sqrt{ \mathbf{K}^{(\ell)}} Z^{(\ell)} \right)
    \right]
\end{equation*}
 with  $Z^{(\ell)} \sim\CN\left( 0,\id_{D_\ell} \right)$.
\end{proposition}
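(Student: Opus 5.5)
We argue by induction on the layer index $\ell$, combining Lemma~\ref{lemma:triangle_lln} and Lemma~\ref{lemma:bridge-lln}. Since convergence in probability to a constant vector is equivalent to componentwise convergence in probability, it suffices to prove that for every $\ell=1,\dots,L$ the random vector $(\mathbf{K}^{(2,n)},\dots,\mathbf{K}^{(\ell+1,n)})$ converges in probability to $(\mathbf{K}^{(2)},\dots,\mathbf{K}^{(\ell+1)})$. Because of the Markov structure of Definition~\ref{def:general_structure}, this reduces to proving that $\mathbf{K}^{(\ell+1,n)}\stackrel{\PP}{\to}\mathbf{K}^{(\ell+1)}$ for each $\ell$ separately. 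The joint statement then follows, since finitely many marginals converging in probability to constants yields joint convergence in probability.

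\emph{Base case $\ell=1$.} Here $\mathbf{K}^{(1,n)}=\mathbf{K}^{(1)}$ is deterministic, so $\mathbf{K}^{(2,n)}$ has law $\nu_{2,n}(\cdot\mid\mathbf{K}^{(1)})$. In distribution,
\[
\mathbf{K}^{(2,n)}=\frac{1}{C_1(n)}\sum_{c=1}^{C_1(n)}\CG^{(1)}\bigl(\sqrt{\mathbf{K}^{(1)}}Z_c\bigr).
\]
Lemma~\ref{lemma:triangle_lln}, applied with the constant sequence $Q_n=\mathbf{K}^{(1)}$, gives convergence in probability to $\E[\CG^{(1)}(\sqrt{\mathbf{K}^{(1)}}Z)]=\mathbf{K}^{(2)}$. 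This limit is finite: Assumption~\ref{A2} with $r<2$ makes $\CG^{(1)}(\sqrt{Q}Z)$ integrable against the Gaussian, which is what Lemma~\ref{lemma:triangle_lln} uses.

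\emph{Inductive step.} Assume $\mathbf{K}^{(\ell,n)}\stackrel{\PP}{\to}\mathbf{K}^{(\ell)}$. Apply Lemma~\ref{lemma:bridge-lln} with the following choices:
\begin{itemize}
\item $X_n=\mathbf{K}^{(\ell,n)}$ in the Polish space $\XX=\psd{D_\ell}$, which is a closed subset of a Euclidean space;
\item $Y_n=\mathbf{K}^{(\ell+1,n)}$ in $\YY=\psd{D_{\ell+1}}$;
\item $x_0=\mathbf{K}^{(\ell)}$ and $y_0=\mathbf{K}^{(\ell+1)}$.
\end{itemize}
Condition (1) is the induction hypothesis. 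For condition (2), take $\nu_n(dy\mid x)=\nu_{\ell+1,n}(dy\mid x)$ from \eqref{trans.kern2}, which is a version of the conditional law by the Markov property. Let $x_n\to\mathbf{K}^{(\ell)}$ be any deterministic sequence in $\psd{D_\ell}$. Then $\nu_{\ell+1,n}(\cdot\mid x_n)$ is the law of
\[
\frac{1}{C_\ell(n)}\sum_{c=1}^{C_\ell(n)}\CG^{(\ell)}\bigl(\sqrt{x_n}Z_c\bigr),
\]
which by Lemma~\ref{lemma:triangle_lln} converges in probability to $\E[\CG^{(\ell)}(\sqrt{\mathbf{K}^{(\ell)}}Z)]=\mathbf{K}^{(\ell+1)}$. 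Convergence in probability to a constant implies weak convergence of the laws to $\delta_{\mathbf{K}^{(\ell+1)}}$. Lemma~\ref{lemma:bridge-lln} therefore gives $\mathbf{K}^{(\ell+1,n)}\stackrel{\PP}{\to}\mathbf{K}^{(\ell+1)}$, which completes the induction.

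The real analytic content — a uniform law of large numbers along a triangular array $Q_n\to Q$, resting on continuity of $\CG^{(\ell)}$, Remark~\ref{continuitySqrt}, and uniform integrability from the exponential bound with $r<2$ via Lemma~\ref{ineqNormaG} — is entirely contained in Lemma~\ref{lemma:triangle_lln}, which we may assume. The only points to check in the argument above are that $\psd{D}$ is Polish and that $\nu_{\ell+1,n}$ is a genuine version of the conditional distribution; both are immediate.
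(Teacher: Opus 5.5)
Your proof is correct and follows the paper's argument: induction on the layer, with the base case given by the law of large numbers for the deterministic $\mathbf{K}^{(1)}$, and the inductive step obtained by using Lemma~\ref{lemma:triangle_lln} to verify hypothesis (2) of Lemma~\ref{lemma:bridge-lln}. One small correction: the reduction from joint to marginal convergence uses only that the limits are constants, not the Markov structure, which enters solely through the conditional law in Lemma~\ref{lemma:bridge-lln}.
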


\begin{proof}
The proof proceeds by induction, where the base case, $\mathbf{K}^{(2,n)} \to \mathbf{K}^{(2)}$ is simply an application of the law of large numbers, since $\mathbf{K}^{(1)}$ is deterministic. 
Thereafter, for $\ell = 2, \ldots, L$, suppose that, as $n \to \infty$, the first $\ell$ elements of the Markov chain converge in probability to the limit stated in the proposition. This, in particular, implies that $\mathbf{K}^{(\ell,n)} \xrightarrow{\mathbb{P}} \mathbf{K}^{(\ell)}$, meaning that the first condition of Lemma~\ref{lemma:bridge-lln} holds.
Now consider $\nu_{\ell+1,n}$ as in Definition \ref{def:general_structure} and take any sequence $\{ Q_n\}_n \subset \psd{D}$ such that $Q_n\to Q:=\mathbf{K}^{(\ell)}$. Observe that $\nu_{\ell+1,n}(\cdot\mid Q_n)$ is by definition the law of the random variable 
\[
  \frac{1}{  C_{\ell}(n)} \sum_{c=1}^{C_{\ell}(n)} \CG^{(\ell)} ( \sqrt{Q_{n} } Z_{c}^{(\ell)} ) .
  \]
At this point, as $n\to\infty$ an application of Lemma \ref{lemma:triangle_lln} leads to
\begin{equation*}
   \frac{1}{  C_{\ell}(n)} \sum_{c=1}^{C_{\ell}(n)} \CG^{(\ell)} ( \sqrt{Q_{n} } Z_{c}^{(\ell)} )      \convp 
 \E\left[ \CG^{(\ell)}\left(\sqrt{\mathbf{K}^{(\ell)}} Z^{(\ell)}\right) \right].
\end{equation*} 
The consequence is that the second hypothesis of Lemma~\ref{lemma:bridge-lln}  also holds, which implies that $ \mathbf{K}^{(\ell+1,n)} \xrightarrow{\mathbb{P}}  \mathbf{K}^{(\ell+1)}$, thereby concluding the inductive step and hence the proof.
\end{proof}

In the light of Lemma \ref{lemma:fromCNNtoGeneral}, Theorem \ref{prop:cnn_lln}
follows from Proposition \ref{prop:gs_lln}.

\subsection{Conditional large deviation principle}

In order to prove a full LDP for the sequence of $\{ (\mathbf{K}^{(2,n)}, \ldots, \mathbf{K}^{(L+1,n)})\}_n$, 
we shall apply  a "conditional large deviation principle" derived in~\cite{Cha97}. 

\begin{definition}[conditional LDP continuity condition, \cite{Cha97}] 
\label{def:ldp_cont_cond}
    Let $\XX_1$ and $\XX_2$ be two Polish spaces. A sequence of transition kernels $\{\nu_n^{(2|1)} : \CB(\XX_2) \times  \XX_1  \to [0,1]\}_n$ is said to satisfy the conditional LDP continuity condition with rate function $\CI_{2|1}(\cdot|\cdot)$ if:
    \begin{enumerate}
        \item For each $x_1 \in \XX_1$, $\CI_{2|1}(\cdot|x_1)$ is a good rate function on $\XX_2$.
        \item For each $x_1 \in \XX_1$ and each sequence $x_{1,n} \to x_1$, $\{\nu_n^{(2|1)}(\cdot|x_{1,n})\}_{n \ge 1}$ satisfies a large deviation principle  on $\XX_2$ with rate function $\CI_{2|1}(\cdot|x_1)$.
        \item The mapping $(x_1, x_2) \mapsto \CI_{2|1}(x_2|x_1)$ is lower semi-continuous.
    \end{enumerate}
\end{definition}

Recall also that 
a sequence of measures $\{\mu_n\}_n$ satisfies a weak large deviation principle with rate function $I$ 
if the upper bound  \eqref{eq:ldp_upper}  is true only for compact sets and not for generic closed sets. 

Given these definitions, the result from~\cite{Cha97} reads as follows.

\begin{proposition}[Theorem~2.3, \cite{Cha97}]\label{thm:joint_LDP_general}
  Let $\XX_1$ and $\XX_2$ be two Polish spaces. 
   Let $\{\nu_n^{(1)}\}_n$ be a sequence of probability measures on $\CB(\XX_1)$, satisfying a LDP with good rate function $I_1$. 
  Let  $\{\nu_n^{(2|1)}\}_n$ be a  sequence of transition kernels on  $\CB(\XX_2)  \times  \XX_1$
   satisfying a conditional LDP continuity condition with rate function $\CI_{2|1}(\cdot|\cdot)$. 
Then
     the sequence of measures $\{\mu_n\}_n$ defined by 
    \[
    \mu_n (A \times B) = \int_A \nu_n^{(2|1)}(B|x) \,\nu_n^{(1)}(dx) \qquad A \in \CB(\XX_1) \, \, B \in \CB(\XX_2)
    \]
    satisfies a weak LDP with rate function $\CI(x_1,x_2)= \CI_1(x_1) + \CI_{2|1}(x_2|x_1)$.
\end{proposition}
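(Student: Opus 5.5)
The plan is to use the standard characterization of a weak LDP through local estimates on balls (Theorem~4.1.11 in \cite{dembo98}). In a Polish space $\XX_1\times\XX_2$, with a base of product balls $B_1(x_1,\delta)\times B_2(x_2,\delta)$, it suffices to show three things:
\begin{itemize}
\item[(i)] $\CI=\CI_1+\CI_{2|1}$ is lower semicontinuous;
\item[(ii)] for every $(x_1,x_2)$ one has $\lim_{\delta\to0}\liminf_n \frac1n\log\mu_n(B_1(x_1,\delta)\times B_2(x_2,\delta))\ge -\CI(x_1,x_2)$;
\item[(iii)] for every $(x_1,x_2)$ one has $\lim_{\delta\to0}\limsup_n \frac1n\log\mu_n(\bar B_1(x_1,\delta)\times \bar B_2(x_2,\delta))\le -\CI(x_1,x_2)$.
\end{itemize}
Point (i) is immediate, since $\CI_1$ is lower semicontinuous and so is $(x_1,x_2)\mapsto\CI_{2|1}(x_2|x_1)$ by condition (3) of Definition~\ref{def:ldp_cont_cond}. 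The open-set lower bound and the compact-set upper bound then follow by the usual arguments: localize around a point for the lower bound, and use finite coverings of compacts by small balls for the upper bound.

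\textbf{Lower bound (ii).} Fix $\delta>0$. The key intermediate claim is a uniform version of condition (2): there is $\eta>0$ such that
\[
\liminf_n \inf_{x\in B_1(x_1,\eta)}\tfrac1n\log\nu_n^{(2|1)}(B_2(x_2,\delta)|x)\ge -\CI_{2|1}(x_2|x_1)-\epsilon .
\]
I would prove it by contradiction with a diagonal construction. If the claim fails for every $\eta=1/k$, we can pick indices $n_k\uparrow\infty$ and points $\tilde x_{k}\in B_1(x_1,1/k)$ at which the bound is violated. Define a full sequence by $x_{n}=\tilde x_k$ when $n=n_k$ and $x_n=x_1$ otherwise. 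Then $x_n\to x_1$, so by condition (2) the kernels $\nu_n^{(2|1)}(\cdot|x_n)$ satisfy the LDP with rate $\CI_{2|1}(\cdot|x_1)$. Its lower bound on the open ball $B_2(x_2,\delta)$ contradicts the violation along $n_k$. Given the claim, for $\eta\le\delta$ we estimate
\[
\mu_n(B_1(x_1,\delta)\times B_2(x_2,\delta))\ge \nu^{(1)}_n(B_1(x_1,\eta))\inf_{x\in B_1(x_1,\eta)}\nu^{(2|1)}_n(B_2(x_2,\delta)|x).
\]
Applying the LDP lower bound for $\nu_n^{(1)}$ on the first factor and letting $\epsilon\to0$ gives (ii).

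\textbf{Upper bound (iii).} The argument is symmetric. We have
\[
\mu_n(\bar B_1\times\bar B_2)\le \nu_n^{(1)}(\bar B_1(x_1,\delta))\sup_{x\in\bar B_1(x_1,\delta)}\nu_n^{(2|1)}(\bar B_2(x_2,\delta)|x).
\]
Choosing near-maximizers $x_{n}$ and again using the diagonal trick with radii $\delta_n\to0$, condition (2) gives
\[
\lim_{\delta\to0}\limsup_n\sup_{x\in\bar B_1(x_1,\delta)}\tfrac1n\log\nu_n^{(2|1)}(\bar B_2(x_2,\delta)|x)\le -\lim_{\delta\to0}\inf_{y\in\bar B_2(x_2,\delta)}\CI_{2|1}(y|x_1).
\]
The right-hand side equals $-\CI_{2|1}(x_2|x_1)$ by lower semicontinuity of $\CI_{2|1}(\cdot|x_1)$; goodness from condition (1) is only needed to make the infima well behaved. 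The first factor is handled by the LDP upper bound for $\nu_n^{(1)}$ on closed balls together with lower semicontinuity of $I_1$.

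\textbf{Main obstacle.} The delicate step is the uniformity in the conditioning variable. Condition (2) is stated only along convergent sequences, and must be upgraded to bounds that are uniform over shrinking balls. This requires extracting subsequences and padding them into full sequences converging to $x_1$, while keeping the radii in the $\XX_1$ and $\XX_2$ directions coupled carefully, especially in the upper bound where both $\delta$-limits are taken simultaneously. Only a weak LDP is obtained because no exponential tightness is assumed.
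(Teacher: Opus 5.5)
\emph{Verdict: your proof is correct.} The paper gives no proof of this statement. It is imported from \cite{Cha97} and used as a black box in the induction of Proposition~\ref{prop:gs_ldp}.

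Your argument is a self-contained proof along the standard route for such results, which I believe is also how the original reference proceeds. You use the local characterization of a weak LDP on a base of product balls (Theorem~4.1.11 in \cite{dembo98}). You then upgrade the sequential condition (2) to bounds uniform over small balls in $\XX_1$, by padding a violating subsequence $(n_k,\tilde x_k)$ into a full sequence converging to $x_1$. I checked both the lower-bound and upper-bound contradictions, and they go through.

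What writing the proof out buys over the citation is that it shows exactly what is used: condition (2) plus lower semicontinuity of $I_1$ and of $\CI_{2|1}(\cdot|x_1)$.
\begin{itemize}
\item Goodness plays no role. Your remark that it is needed ``to make the infima well behaved'' is inaccurate: lower semicontinuity alone gives $\inf_{\bar B_2(x_2,\delta)}\CI_{2|1}(\cdot|x_1)\uparrow\CI_{2|1}(x_2|x_1)$ as $\delta\downarrow0$.
\item Step (i) is superfluous, because the rate produced by Theorem~4.1.11 is automatically lower semicontinuous. In fact, combining your uniform upper bound with the lower bound in (2) for constant sequences $x_{1,n}\equiv x_{1,k}$ shows that condition (3) follows from (1)--(2).
\end{itemize}

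Two small points to tidy up:
\begin{itemize}
\item When $\CI(x_1,x_2)=+\infty$, the upper bound should read ``$\le -1/\epsilon$'' rather than ``$\le-\CI+\epsilon$''. The lower bound is then trivial.
\item The coupling of radii you flag as the main obstacle disappears if you decouple them. Fix an $\XX_2$-radius $\delta$ and $\epsilon>0$. Applying the diagonal argument to the fixed closed set $\bar B_2(x_2,\delta)$ gives $\delta'>0$ with
$\limsup_n\sup_{x\in\bar B_1(x_1,\delta')}\frac1n\log\nu_n^{(2|1)}(\bar B_2(x_2,\delta)|x)\le-\inf_{\bar B_2(x_2,\delta)}\CI_{2|1}(\cdot|x_1)+\epsilon$.
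This bound persists for smaller $\delta'$, and $\bar B_1(x_1,\delta')\times\bar B_2(x_2,\delta)$ is a neighbourhood of $(x_1,x_2)$. So you may shrink $\delta'$ to control $\inf_{\bar B_1(x_1,\delta')}I_1$, and only then let $\delta\to0$.
\end{itemize}
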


The key point is to show that each sequence of kernels in Definition \ref{def:general_structure} satisfies the LDP continuity condition above. Following the strategy used for the fully connected neural network in \cite{Vog24},
this will be obtained by combining exponential equivalence and Cramer's theorem. 
See also \cite{MacciTrevisanetal}.

For $Q_1 \in \RE^{D_{\ell} \times D_\ell}$ and  $Q_0 \in  \sym{D_{\ell+1}}$ define 
\[
   M_{\ell}(Q_0|Q_1):=    
        \E \left[\exp 
        \left(
         \tr(Q_0^\top \CG^{(\ell)}(\sqrt{Q_1}Z))
        \right)\right]
\]
where $Z \sim \CN_{D_\ell}(\mathbf{0},\mathbf{I}_{D_\ell})$
and 
\[
\CD_{\ell,Q_1}:=\{Q_0\in  \RE^{D_{\ell} \times D_\ell}: \, M_\ell(Q_0|Q_1)<+\infty \}.
\]

Note that for  $Q_1 \in \psd{D_{\ell}}$ and $Q_2 \in \psd{D_{\ell+1}}$, 
the function 
 $I_{\ell}(Q_2|Q_1)$  defined in  \eqref{eq:gs_rate0}
with $\CG^{(\ell)}$ in place of  $G^{(\ell)}$ 
 can be written as 
 \begin{equation}\label{eq:gs_rate}
      I_\ell(Q_2|Q_1)=
 \sup_{Q_0 \in  \sym{D_{\ell+1}}} 
 [\tr(Q_0^\top Q_2)-\log(M_\ell(Q_0|Q_1))].
  \end{equation}
Moreover, it is easy to check that 
\begin{equation}\label{eq.Ieqv.def}
\sup_{Q_0 \in  \sym{D_{\ell+1}}} 
 [\tr(Q_0^\top Q_2)-\log(M_\ell(Q_0|Q_1))]=
 \sup_{Q_0 \in\RE^{D_{\ell+1} \times D_{\ell+1}}} 
 [\tr(Q_0^\top Q_2)-\log(M_\ell(Q_0|Q_1))].
\end{equation}

\begin{lemma}\label{Lemma_bound_lip_onG}
 Let  $Q_1,Q_{1,n} \in \psd{D_{\ell}}$ for every $n$ and assume that  
  $Q_{1,n} \to Q_1$ as $n \to \infty$. 
Assume also that  {\rm \ref{A2bis}} holds true. Then, 
  for every $\eta>0$ 
there is  $ \bar n$ and $\bar C_\eta<+\infty$ such that 
\begin{equation}
\label{eq:increment_bound2}
\left\|\CG^{(\ell)}\left(\sqrt{Q_1}z \right)- \CG^{(\ell)} \left(\sqrt{Q_{1,n}} z\right) \right\|_F \le \bar C_\eta +\eta \|z\|^2 
\qquad \forall n \geq \bar n \qquad \forall z \in \RE^{D_\ell}.
\end{equation}
 \end{lemma}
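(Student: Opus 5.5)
Fix $\eta>0$. I apply Assumption \ref{A2bis} with the points $\sqrt{Q_1}z$ and $\sqrt{Q_{1,n}}z$, and with a parameter $\varepsilon>0$ to be chosen in terms of $\eta$:
\[
\|\CG^{(\ell)}(\sqrt{Q_1}z)-\CG^{(\ell)}(\sqrt{Q_{1,n}}z)\|_F\le C_\varepsilon\bigl[1+\|(\sqrt{Q_1}-\sqrt{Q_{1,n}})z\|_2(\|\sqrt{Q_1}z\|_2+\|\sqrt{Q_{1,n}}z\|_2)\bigr]+\varepsilon(\|\sqrt{Q_1}z\|_2^2+\|\sqrt{Q_{1,n}}z\|_2^2).
\]
Each Euclidean norm on the right is controlled through Remark \ref{matrixnorm}(a),(b). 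For instance, $\|\sqrt{Q}z\|_2\le \|Q\|_2^{1/2}\|z\|_2$.

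Since $Q_{1,n}\to Q_1$, the operator norms are bounded: there are $\bar n_0$ and $M<\infty$ such that $\|Q_{1,n}\|_2\le M$ and $\|Q_1\|_2\le M$ for all $n\ge\bar n_0$. This gives
\[
\|\sqrt{Q_1}z\|_2+\|\sqrt{Q_{1,n}}z\|_2\le 2\sqrt{M}\|z\|_2
\qquad\text{and}\qquad
\|\sqrt{Q_1}z\|_2^2+\|\sqrt{Q_{1,n}}z\|_2^2\le 2M\|z\|^2 .
\]
By Remark \ref{continuitySqrt}, $\delta_n:=\|\sqrt{Q_1}-\sqrt{Q_{1,n}}\|_2\to 0$. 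Putting these together, the right-hand side is at most
\[
C_\varepsilon+\bigl(2\sqrt{M}\,C_\varepsilon\delta_n+2M\varepsilon\bigr)\|z\|_2^2 .
\]

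The only delicate point is the order of the choices, because $C_\varepsilon$ may blow up as $\varepsilon\to0$. I therefore proceed in two steps. First, choose $\varepsilon=\eta/(4M)$, so that $2M\varepsilon=\eta/2$. Second, with $C_\varepsilon$ now fixed, choose $\bar n\ge\bar n_0$ such that $2\sqrt{M}C_\varepsilon\delta_n\le\eta/2$ for all $n\ge\bar n$; this is possible because $\delta_n\to0$. Setting $\bar C_\eta:=C_\varepsilon$ then yields \eqref{eq:increment_bound2} uniformly in $z$.
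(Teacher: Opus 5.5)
Your proof is correct and follows essentially the same route as the paper's. Like the paper, you apply Assumption~\ref{A2bis} at the points $\sqrt{Q_1}z$ and $\sqrt{Q_{1,n}}z$, bound the norms uniformly using convergence of $Q_{1,n}$ and the continuity of the square root, and then fix $\varepsilon$ in terms of $\eta$ before choosing $\bar n$. You spell out this order of choices a bit more explicitly than the paper does.
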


 \begin{proof}
     One can assume that $\|\sqrt{Q_{1,n}}\|\leq M$ and $\|\sqrt{Q_{1}}\|\leq M$. Using 
 \ref{A2bis}, one gets 
\begin{equation*}
\label{eq:increment_bound}
\begin{split}
   & \left\|\CG^{(\ell)}\left(\sqrt{Q_1}z\right)- \CG^{(\ell)} \left(\sqrt{Q_{1,n}}z\right) \right\|_F \\ 
   &\le C_\varepsilon 
\Big[ 1+
\left\| \sqrt{Q_1}-\!\sqrt{Q_{1,n}}\right\|_2  
\left(\left\|\sqrt{Q_1}\right\|_2  + \left\|\sqrt{Q_{1,n}}\right\|_2  \right)
\|z\|_2^2 \Big]
\\
&
+ \varepsilon\left(\left\|\sqrt{Q_1}\right\|_2^2 + \left\|\sqrt{Q_{1,n}}\right\|_2^2 \right)\|z\|_2^2
\le 
C_\varepsilon + 2M\Big ( C_\varepsilon  \|\sqrt{Q_1}-\sqrt{Q_{1,n}}\|_2 +M \varepsilon \Big ) \|z\|_2^2  \\
\end{split}
\end{equation*}
where $\|\sqrt{Q_1}-\sqrt{Q_{1,n}}\|_2 \to 0$. 
Hence, for every $\eta>0$ there exists $\varepsilon>0$ and 
 $ \bar n(\varepsilon)$  such that 
$2M[C_\varepsilon  \|\sqrt{Q_1}-\sqrt{Q_{1,n}}\|_2 +M \varepsilon] \leq \eta$ for every $n \geq \bar n$.
 \end{proof}

\begin{lemma}\label{Lemma_genMom} Let  $Q_1 \in \psd{D_{\ell}}$ and  
assume  {\rm\ref{A2bis}}.
Then:
\begin{itemize}
    \item[(a)] $\{Q_0: \|Q_0\|_F < \frac{1}{2A\|Q_1\|_2 }  \} \subset \CD_{\ell,Q_1}$. 
In particular, $\mathbf{0} \in \overset{\circ}{\CD}_{\ell,Q_1} $.
    \item[(b)] For every $Q_0 \in  \overset{\circ}{\CD}_{\ell,Q_1}$ if $Q_{1,n} \to Q_1$ then 
$\lim_n M(Q_0|Q_{1,n})=M(Q_0|Q_{1})$.
\item[(c)] For every $Q_0 \in {\CD}_{\ell,Q_1}$, 
there is $Q_{0,n} \in \overset{\circ}{\CD}_{\ell,Q_1}$ 
such that 
\[
\lim_{n \to +\infty} \log(M(Q_{0,n}|Q_1)) \leq 
\log(M(Q_0|Q_1)).
\]
\end{itemize}
\end{lemma}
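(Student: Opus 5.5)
For (a) I will bound the exponent using Cauchy--Schwarz and the growth bound \eqref{poly2G}:
\[
|\tr(Q_0^\top \CG^{(\ell)}(\sqrt{Q_1}Z))|\le \|Q_0\|_F\,\|\CG^{(\ell)}(\sqrt{Q_1}Z)\|_F\le \|Q_0\|_F A(1+\|Q_1\|_2\|Z\|_2^2).
\]
Here I use Remark \ref{matrixnorm}(a),(b) to get $\|\sqrt{Q_1}Z\|_2^2\le\|Q_1\|_2\|Z\|_2^2$. The moment $\E[e^{t\|Z\|_2^2}]$ is finite iff $t<1/2$. So $M_\ell(Q_0|Q_1)<\infty$ as soon as $A\|Q_0\|_F\|Q_1\|_2<1/2$. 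This set is an open neighbourhood of $\mathbf 0$, which gives $\mathbf 0\in\overset{\circ}{\CD}_{\ell,Q_1}$.

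For (b), fix $Q_0$ in the interior. I pick $\delta>0$ with $(1+\delta)Q_0\in\CD_{\ell,Q_1}$, which is possible since the domain is convex (by Hölder) and contains $\mathbf 0$. By Lemma \ref{Lemma_bound_lip_onG}, for any $\eta>0$ and $n\ge\bar n$,
\[
\tr(Q_0^\top\CG^{(\ell)}(\sqrt{Q_{1,n}}Z))\le \tr(Q_0^\top\CG^{(\ell)}(\sqrt{Q_1}Z))+\|Q_0\|_F(\bar C_\eta+\eta\|Z\|^2).
\]
The integrands $Y_n:=e^{\tr(Q_0^\top\CG^{(\ell)}(\sqrt{Q_{1,n}}Z))}$ converge a.s. to the limit integrand, by continuity of $\CG^{(\ell)}$ and Remark \ref{continuitySqrt}. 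To pass to the limit I show uniform integrability by bounding $\sup_n\E[Y_n^{p}]$ for some $p>1$. Hölder gives
\[
\E[Y_n^p]\le e^{p\|Q_0\|_F\bar C_\eta}\,\E\bigl[e^{pq\,\tr(Q_0^\top\CG(\sqrt{Q_1}Z))}\bigr]^{1/q}\,\E\bigl[e^{pq'\|Q_0\|_F\eta\|Z\|^2}\bigr]^{1/q'}.
\]
I choose $pq=1+\delta$, so the middle factor is finite, and then $\eta$ small enough that the last factor is finite. Vitali's theorem then yields $M(Q_0|Q_{1,n})\to M(Q_0|Q_1)$.

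For (c), I take $Q_{0,n}=(1-1/n)Q_0$. For $t\in[0,1)$ and the interior point $\mathbf 0$, the point $tQ_0$ lies in the interior of the convex set $\CD_{\ell,Q_1}$. The map $t\mapsto \log M(tQ_0|Q_1)$ is convex on $[0,1]$ and finite at both endpoints. Hence $\log M(tQ_0|Q_1)\le (1-t)\log M(\mathbf 0|Q_1)+t\log M(Q_0|Q_1)=t\log M(Q_0|Q_1)$, using $M(\mathbf 0|Q_1)=1$. Letting $t\to1$ gives the inequality; monotone or dominated convergence would even give equality.

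The main obstacle is the uniform integrability in (b). It needs the fine control of Lemma \ref{Lemma_bound_lip_onG}: the $\eta\|z\|^2$ perturbation must be small enough not to destroy Gaussian integrability. This is exactly why \ref{A2bis} carries the $\varepsilon(\|z\|^2+\|z'\|^2)$ term with arbitrarily small $\varepsilon$.
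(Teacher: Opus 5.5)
Your proof is correct and follows the paper's argument essentially step by step. The only variation is in (b): the paper splits the exponential with the convexity (Young-type) inequality $e^{a+b}\le \frac{1}{1+\delta}e^{(1+\delta)a}+\frac{\delta}{1+\delta}e^{\frac{1+\delta}{\delta}b}$ to get an $n$-independent dominating function and then applies dominated convergence, whereas you use H\"older to bound $\sup_n \E[Y_n^p]$ and conclude with Vitali, which is the same mechanism. One small slip: the existence of $\delta>0$ with $(1+\delta)Q_0\in\CD_{\ell,Q_1}$ follows simply from $Q_0$ being an interior point, not from convexity and $\mathbf 0\in\CD_{\ell,Q_1}$, which only give $tQ_0\in\CD_{\ell,Q_1}$ for $t\le 1$.
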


\begin{proof} Using \ref{poly2G}, one can write 
\[
 |\tr(Q_0^\top \CG^{(\ell)}(\sqrt{Q_1}Z))|  \leq \|Q_0\|_F \|\CG^{(\ell)}(\sqrt{Q_1}Z))\|_F 
\leq \|Q_0\|_F  A(1+\|\sqrt{Q_1}\|_2^2\|Z\|_2^2). 
\]
Point (a) follows since 
$\E[e^{t\|Z\|_2^2}]<+\infty$
for every $t<1/2$. To prove (b), note that 
\begin{equation}
\begin{split}
\tr\left(Q_0^\top\CG^{(\ell)}\left(\sqrt{Q_{1,n}}Z\right)\right)
& \leq  
\tr\left(Q_0^\top\CG^{(\ell)}\left(\sqrt{Q_{1}}Z\right)\right)   \\
& +
\|Q_0\|_F  \left\|\CG^{(\ell)}\left(\sqrt{Q_1}Z\right)- \CG^{(\ell)} \left(\sqrt{Q_{1,n}} Z\right) \right\|_F. \\
\end{split}
\end{equation}
By \eqref{eq:increment_bound2}, one can assume
 that 
$
\left\|\CG^{(\ell)}\left(\sqrt{Q_1}Z \right)- \CG^{(\ell)} \left(\sqrt{Q_{1,n}} Z\right) \right\|_F \le \bar C_\eta +\eta \|Z\|^2 $ 
for every $n \geq \bar n$. 
Using the convexity of the exponential function, for every $\delta>0$, it holds that
$e^{a+b} \le 
(1+\delta)^{-1}\, e^{(1+\delta)a}
+ {\delta}{(1+\delta)^{-1}}\  e^{\frac{(1+\delta)}{\delta}\, b}$,
and hence
\begin{align*}
\exp\left( \tr\!\left(Q_0^\top \CG^{(\ell)}\!\left(\sqrt{Q_{1,n}}Z\right)\right) \right)
&\le \frac{1}{1+\delta} \exp\!\left( \tr\!\left((1+\delta) Q_0^\top \CG^{(\ell)}\!\left(\sqrt{Q_{1}}Z\right)\right) \right)
\\
&+
\frac{\delta}{(1+\delta)} \exp\left( \frac{1+\delta}{\delta}  \|Q_0\|_F (\bar C_\eta+\eta \|Z\|_2^2 ) \right) =:H(Z).
\end{align*} 
By point (a), being $Q_0$ in the interior of $\CD_{\ell,Q_1}$, 
one can choose $\delta>0$ such that $(1+\delta) Q_0 \in 
\CD_{\ell,Q_1}$. Hence
\[
\E\left[\exp\!\left( \tr\!\left((1+\delta) Q_0^\top \CG^{(\ell)}\!\left(\sqrt{Q_{1}}Z\right)\right) \right)\right]<+\infty.
\]
Moreover, given such $\delta$, one can choose $\eta$ such that $\eta\frac{1+\delta}{\delta}  \|Q_0\|_F  <1/2$, so that 
\[
\E\left[\exp\left( \eta \frac{1+\delta}{\delta}  \|Q_0\|_F\|Z\|_2^2 \right)\right]<+\infty.
\]
 This shows that 
 with $\E[H(Z)]<+\infty$.
 Since by continuity 
\[
\tr(Q_0^\top\CG^{(\ell)}(\sqrt{Q_{1,n}}Z)) \to \tr(Q_0^\top\CG^{(\ell)}(\sqrt{Q_{1}}Z))\]
almost surely, point (b)
follows by dominated convergence theorem. 
It remains to prove (c). 
It is well-known and easy to check that $Q_0 \mapsto \log(M_\ell(Q_0|Q_1))$ is convex in $\CD_{\ell,Q_1}$, hence if
$Q_0 \in \CD_{\ell,Q_1}$ then $Q_{0,n}=(1-1/n)Q_0=(1-1/n)Q_0+1/n\mathbf{0} \in \overset{\circ} {\CD}_{\ell,Q_1}$ (by part (a) and convexity) and 
$
 \log( M_\ell(Q_{0,n}|Q_1))  \leq (1-1/n)  \log( M_\ell(Q_0|Q_1) ) +1/n  \log(M_\ell(\mathbf{0} |Q_1))$, which is $ \log( M_\ell(Q_{0,n}|Q_1)) \leq  
 (1-1/n)  \log( M_\ell(Q_0|Q_1) )$.
Taking the limit one has  (c). 
\end{proof}

\begin{lemma}\label{lemma:r=2lsc}
Assume {\rm \ref{A2bis}}. 
If  $(Q_{1,n},Q_{2,n}) \to (Q_1,Q_2)$ in $\psd{D_{\ell}}\times \sym{D_{\ell+1}}$, then 
 \[
 \liminf_{n \to +\infty} I_\ell(Q_{2,n}|Q_{1,n}) \geq I_\ell(Q_{2}|Q_1).
 \]
\end{lemma}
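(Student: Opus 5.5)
We prove lower semicontinuity of $(Q_1,Q_2)\mapsto I_\ell(Q_2|Q_1)$ by treating $I_\ell$ as a supremum of functions and checking that the interior of the effective domain suffices.

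Fix $Q_0\in\overset{\circ}{\CD}_{\ell,Q_1}$. By Lemma~\ref{Lemma_genMom}(b), $M_\ell(Q_0|Q_{1,n})\to M_\ell(Q_0|Q_1)<+\infty$. Clearly $\tr(Q_0^\top Q_{2,n})\to \tr(Q_0^\top Q_2)$. Since $I_\ell(Q_{2,n}|Q_{1,n})$ is a supremum over all $Q_0$, we get
\[
\liminf_{n\to+\infty} I_\ell(Q_{2,n}|Q_{1,n})\geq \lim_{n\to+\infty}\bigl[\tr(Q_0^\top Q_{2,n})-\log M_\ell(Q_0|Q_{1,n})\bigr]=\tr(Q_0^\top Q_2)-\log M_\ell(Q_0|Q_1).
\]
Taking the supremum over the interior gives
\[
\liminf_{n} I_\ell(Q_{2,n}|Q_{1,n})\geq \sup_{Q_0\in \overset{\circ}{\CD}_{\ell,Q_1}}\bigl[\tr(Q_0^\top Q_2)-\log M_\ell(Q_0|Q_1)\bigr].
\]

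It remains to show that this restricted supremum equals $I_\ell(Q_2|Q_1)$. By \eqref{eq.Ieqv.def}, the supremum may be taken over all real matrices $Q_0$. Points with $Q_0\notin\CD_{\ell,Q_1}$ contribute $-\infty$, so only $Q_0\in\CD_{\ell,Q_1}$ matter.

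For such $Q_0$, use the sequence $Q_{0,k}=(1-1/k)Q_0$ from Lemma~\ref{Lemma_genMom}(c). These matrices lie in $\overset{\circ}{\CD}_{\ell,Q_1}$ and satisfy $\tr(Q_{0,k}^\top Q_2)\to\tr(Q_0^\top Q_2)$. By the convexity bound proved there, $\limsup_k\log M_\ell(Q_{0,k}|Q_1)\le\log M_\ell(Q_0|Q_1)$, which is all we need. Hence
\[
\tr(Q_0^\top Q_2)-\log M_\ell(Q_0|Q_1)\le \liminf_{k\to+\infty}\bigl[\tr(Q_{0,k}^\top Q_2)-\log M_\ell(Q_{0,k}|Q_1)\bigr],
\]
and the right-hand side is at most the supremum over the interior. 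This closes the argument.

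One technical point needs care: $\overset{\circ}{\CD}_{\ell,Q_1}$ here is the interior taken in the space of all $D_{\ell+1}\times D_{\ell+1}$ matrices. This causes no conflict with the symmetric formulation, precisely because of \eqref{eq.Ieqv.def}.

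The main obstacle is the convergence $M_\ell(Q_0|Q_{1,n})\to M_\ell(Q_0|Q_1)$. Only moment finiteness at $Q_1$ is known, not along the sequence $Q_{1,n}$. Lemma~\ref{Lemma_genMom}(b) handles this: the asymptotic-Lipschitz bound of Lemma~\ref{Lemma_bound_lip_onG} gives domination for interior points, which is exactly why we restrict to the interior. So the proof reduces to Lemma~\ref{Lemma_genMom}(b), combined with the approximation from inside in Lemma~\ref{Lemma_genMom}(c).
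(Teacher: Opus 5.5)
Your proposal is correct and follows essentially the same route as the paper: a pointwise lower bound for each $Q_0$ in the interior of $\CD_{\ell,Q_1}$ via Lemma~\ref{Lemma_genMom}(b), followed by extending the supremum from the interior to all of $\CD_{\ell,Q_1}$ via the radial approximation $(1-1/k)Q_0$ of Lemma~\ref{Lemma_genMom}(c). Your explicit treatment of non-symmetric $Q_0$ through \eqref{eq.Ieqv.def}, and of the $\liminf$/$\limsup$ in the approximation step, simply spells out details that the paper's terse proof leaves implicit.
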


\begin{proof}
    Let $Q_0 \in \sym{D_{\ell+1}}$, then 
$
I_\ell(Q_{2,n}|Q_{1,n})
\geq \tr(Q_0^\top Q_{2,n})-\log(M(Q_0|Q_{1,n}))$.
Now $\tr(Q_0^\top Q_{2,n}) \to \tr(Q_0^\top Q_{2})$ for any $Q_0$
and 
 $\log(M(Q_0|Q_{1,n})) \to \log(M(Q_0|Q_{1})) $ if
$Q_0 \in  \overset{\circ}{\CD}_{\ell,Q_1}$,  by (b) in Lemma \ref{Lemma_genMom}. Hence
\[
\liminf_n I_\ell(Q_{2,n}|Q_{1,n})
\geq  \sup_{Q_0 \in  \overset{\circ}  \CD_{\ell,Q_1} }
[\tr(Q_0^\top Q_2)-\log(M(Q_0|Q_{1}))].
\]
Using (c) in Lemma \ref{Lemma_genMom}, one can replace   $\overset{\circ} \CD_{\ell,Q_1}$ 
 by  $\CD_{\ell,Q_1}$  in the previous expression and this gives the claim. 
\end{proof}

\begin{lemma}\label{Lemma:r=2cLDP} Let  $Q_1,Q_{1,n} \in \psd{D_{\ell}}$ for every $n$ and assume that  
  $Q_{1,n} \to Q_1$ as $n \to \infty$. 
Assume also that
{\rm \ref{A2bis}} holds true.
Let  $Z_k \iid \CN(\mathbf{0},\id_{D_{\ell}})$ and set
\[
S_n=\frac{1}{C_\ell(n)} \sum_{k = 1}^{C_\ell(n)}\CG^{(\ell)} \left(\sqrt{Q_{1,n}} Z_{k}\right). 
\]
Then 
$S_n$ satifies a LDP  on  $\psd{D_{\ell+1}}$ 
with good rate function $Q_2 \mapsto  \alpha_\ell I_\ell(Q_2|Q_1)$ and 
 speed $n$. 
\end{lemma}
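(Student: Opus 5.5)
The plan is to compare $S_n$ with the "frozen" sum
\[
\tilde S_n=\frac{1}{C_\ell(n)}\sum_{k=1}^{C_\ell(n)}\CG^{(\ell)}\left(\sqrt{Q_1}Z_k\right),
\]
which uses the same Gaussian vectors $Z_k$ but the limit matrix $Q_1$. We then show two things: $\tilde S_n$ satisfies the LDP, and $S_n$ and $\tilde S_n$ are exponentially equivalent at speed $n$. Theorem 4.2.13 in \cite{dembo98} then transfers the LDP, with good rate function, from $\tilde S_n$ to $S_n$.

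\textbf{Step 1 (LDP for $\tilde S_n$).} The summands $\CG^{(\ell)}(\sqrt{Q_1}Z_k)$ are i.i.d.\ in the finite-dimensional space $\sym{D_{\ell+1}}$. By Lemma \ref{Lemma_genMom}(a), their log-moment generating function $\log M_\ell(\cdot|Q_1)$ is finite in a neighbourhood of $\mathbf{0}$. Cram\'er's theorem in $\RE^d$ (Corollary 6.1.6 in \cite{dembo98}) therefore gives, at speed $C_\ell(n)$, an LDP with good convex rate function equal to the Legendre transform of $\log M_\ell(\cdot|Q_1)$. Using \eqref{eq:gs_rate} and \eqref{eq.Ieqv.def}, this rate function is exactly $I_\ell(\cdot|Q_1)$.

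Since $C_\ell(n)/n\to\alpha_\ell$, rescaling the speed to $n$ multiplies the rate by $\alpha_\ell$. The summands take values in the closed set $\psd{D_{\ell+1}}$, so $I_\ell(\cdot|Q_1)=+\infty$ off this set. Hence the LDP restricts to $\psd{D_{\ell+1}}$ (Lemma 4.1.5 in \cite{dembo98}).

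\textbf{Step 2 (exponential equivalence).} This is the main obstacle, because the integrand may grow quadratically and only an asymptotic Lipschitz bound is available. We use Lemma \ref{Lemma_bound_lip_onG}: for each $\eta>0$ and all $n\ge\bar n$,
\[
\|S_n-\tilde S_n\|_F\le \bar C_\eta+\frac{\eta}{C_\ell(n)}\sum_k\|Z_k\|_2^2 .
\]
The constant $\bar C_\eta$ is fatal, since it does not vanish. We therefore sharpen the bound by splitting on $\|z\|\le R$ versus $\|z\|>R$.

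On the ball $\|z\|\le R$, $\CG^{(\ell)}$ is uniformly continuous and $\sqrt{Q_{1,n}}\to\sqrt{Q_1}$ (Remark \ref{continuitySqrt}). So the difference is at most $\varepsilon_n(R)\to0$ uniformly there.

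For $\|z\|>R$, we apply \eqref{A2bisbis} with $C_\varepsilon$ and absorb the constant as $C_\varepsilon\le (C_\varepsilon/R^2)\|z\|^2$. Since the term $\|z-z'\|(\|z\|+\|z'\|)$ is at most $o(1)\|z\|^2$, this gives a bound $\eta(\varepsilon,R,n)\|z\|^2$, where $\eta$ can be made small by taking $\varepsilon$ small, then $R$ large, then $n$ large. Altogether,
\[
\|S_n-\tilde S_n\|_F\le \varepsilon_n(R)+\frac{\eta}{C_\ell(n)}\sum_k\|Z_k\|_2^2 .
\]

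\textbf{Step 3 (Chernoff bound).} For fixed $\delta>0$ and large $n$,
\[
\PP\{\|S_n-\tilde S_n\|_F>\delta\}\le \PP\Big\{\frac1{C_\ell(n)}\sum_k\|Z_k\|^2>\delta/(2\eta)\Big\}\le \exp\big(-C_\ell(n)\, \Lambda^*_{\chi^2_{D_\ell}}(\delta/2\eta)\big).
\]
The chi-square Cram\'er rate grows linearly to $\infty$ as $\eta\to0$, so $\limsup_n \frac1n\log\PP\{\|S_n-\tilde S_n\|_F>\delta\}=-\infty$. This is the required exponential equivalence, which completes the argument.
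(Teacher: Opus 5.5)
Your proof is correct. Its skeleton matches the paper's: the frozen sum $\tilde S_n$ with $Q_1$, Cram\'er's theorem via Lemma \ref{Lemma_genMom}(a), exponential equivalence combined with Theorem 4.2.13 and Lemma 4.1.5 of \cite{dembo98}, and the speed change through $C_\ell(n)/n\to\alpha_\ell$. The genuine difference is in how you prove exponential equivalence.

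After reducing to $C_\ell(n)=n$, the paper uses the exponential Chebyshev inequality with a free parameter $t>0$:
\[
\PP\{\|S_n-\tilde S_n\|_F>\delta\}\le e^{-nt\delta}\,\Bigl(\E\bigl[e^{t\|\CG^{(\ell)}(\sqrt{Q_1}Z_1)-\CG^{(\ell)}(\sqrt{Q_{1,n}}Z_1)\|_F}\bigr]\Bigr)^n .
\]
It then uses the bound $\bar C_\eta+\eta\|z\|_2^2$ of Lemma \ref{Lemma_bound_lip_onG} only as a dominating function, which is integrable once $t\eta<1/2$. Since the difference tends to $0$ pointwise, dominated convergence shows the expectation tends to $1$. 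This gives $\limsup_n\frac1n\log\PP\{\|S_n-\tilde S_n\|_F>\delta\}\le -t\delta$, and letting $t\to\infty$ finishes. So the constant $\bar C_\eta$ is not fatal in general; it is fatal only for the pathwise bound you chose to pursue.

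Your route upgrades Lemma \ref{Lemma_bound_lip_onG} to a deterministic bound $\varepsilon_n(R)+\eta\|z\|_2^2$ with a vanishing constant. You get this from uniform continuity on a ball, and outside it by absorbing $C_\varepsilon\le C_\varepsilon\|z\|_2^2/R^2$. Your order of choices ($\varepsilon$, then $R$, then $n$) is the right one. Everything then reduces to a single chi-square Chernoff bound.

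Your approach buys an explicit decay rate $\alpha_\ell\Lambda^*_{\chi^2_{D_\ell}}(\delta/2\eta)$ and avoids interchanging a limit with an exponential moment. The paper's approach buys brevity: the soft dominated-convergence argument makes your truncation step unnecessary.
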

\begin{proof}
Since the LDP is defined via upper and lower bounds along the full sequence, the same bounds automatically hold along any subsequence.
Given the arbitrariness of $Q_{1,n}$, we may view $S_n$ as a subsequence; hence, it suffices to prove an LDP with rate $I_\ell$ for the special case in which $S_n$ is defined with $n$ in place of $C_\ell(n)=n$. The resulting rate function $\alpha_\ell I_\ell$ arises from the modification of the speed together with the hypothesis $C_\ell(n)/n \to \alpha_\ell$:
\[
\frac{1}{n} \log \PP(S_n \in A)
=\frac{C_\ell(n)}{n} \frac{1}{C_\ell(n)} \log \PP(S_n \in A)
\sim \alpha_\ell  \frac{1}{C_\ell(n)} \log \PP(S_n \in A).
\]
Set
$
\tilde  S_n=\frac{1}{n}  \sum_{k = 1}^n \CG^{(\ell)} \left(\sqrt{Q_1} Z_{k}\right)$. 
Using Lemma \ref{Lemma_genMom} (a) and 
the abstract Cramer theorem  (see, e.g. 
 Corollary 6.1.6
in \cite{dembo98}), 
$\tilde S_n$ satifies a (full) LDP on $\RE^{D_\ell\times D_\ell}$
with good rate function 
$
Q_2 \mapsto \sup_{Q_0 \in   \RE^{D_{\ell} \times D_\ell} } 
 [\tr(Q_0^\top Q_2)-\log(M_\ell(Q_0|Q_1))]$.
which is equal to 
$I_\ell(Q_2|Q_1)$ by \eqref{eq.Ieqv.def} when $Q_2$
is in $\psd{D_{\ell+1}}$.
  We now show  that $S_n$ and $\tilde S_n$
are exponentially equivalent, that is (see e.g. Definition 4.2.10
in \cite{dembo98}): for every $\delta>0$
\[
\lim_{n \to +\infty } \frac{1}{n} \log \left (\PP\{ \|S_n -\tilde S_n\|_F > \delta\} \right) =-\infty . 
\]
Markov inequality and triangular inequality give
\begin{equation*}
    \begin{split}
    & \PP\left(  \left\| \frac{1}{n} \sum_{k = 1}^n \left(\CG^{(\ell)} \left(\sqrt{Q_1} Z_{k} \right) - \CG^{(\ell)} \left(\sqrt{Q_{1,n}} Z_{k}\right)\right)\right\| _F>\delta \right)  
    \\
    &\le
    \exp(- n t  \delta) 
\left (\E\left[ \exp\left( 
    t \left\| \CG^{(\ell)}\left(\sqrt{Q_1}Z_{1}\right)- \CG^{(\ell)} \left(\sqrt{Q_{1,n}} Z_{1}\right) \right\|_F
    \right)\right]\right)^n.
\end{split}
\end{equation*}
For $\eta<1/2$, 
by Lemma \ref{Lemma_bound_lip_onG},  $\|\CG^{(\ell)}(\sqrt{Q_1}Z_{1})- \CG^{(\ell)} (\sqrt{Q_{1,n}} Z_{1})\|_F
\le 
\bar C_\eta+\eta \|Z_1\|^2$ for $n \geq \bar n$. 
Hence, by the dominated convergence theorem we get, for every $t>0$, 
\[
\lim_{n\to\infty}\log\E\left[\exp\left( t \left\| \CG^{(\ell)}\left(\sqrt{Q_1}Z_{1}\right)- \CG^{(\ell)} \left(\sqrt{Q_{1,n}} Z_{1}\right) \right\|_F\right)\right] = 0.
\]
At this point having
\[
\limsup_{n\to\infty} \frac{1}{n}\log\PP\left(  \left\| \frac{1}{n} \sum_{k = 1}^n \left(\CG^{(\ell)} \left(\sqrt{Q_1} Z_{k} \right) - \CG^{(\ell)} \left(\sqrt{Q_{1,n}} Z_{k}\right)\right)\right\| _F>\delta \right)  
\le 
-t\delta,
\]
we get the exponential equivalence by taking the limit as $t\to\infty$.

By Theorem 4.2.13  in \cite{dembo98}  also $S_n$ 
satifies an LDP with good rate function $I_\ell$. 
Since  $\psd{D_{\ell+1}}$ is closed in $\RE^{D_\ell \times D_\ell}$ the thesis follows by  Lemma 4.1.5 in \cite{dembo98}.  
\end{proof}

\begin{proposition}\label{prop:ldp-continuity}
    For every $\ell = 0,\ldots,L$ let 
    $\{\nu_{\ell+1,n}:\CB(\psd{D_{\ell+1}}) \times \psd{D_\ell}  \to [0,1]\}_n$ be as in {\rm Definition \ref{def:general_structure}}
    and let the 
     Assumptions {\rm \ref{A1}} and {\rm \ref{A2bis}}  be valid. 
    Then $\nu_{\ell+1,n}$ satisfies the conditional LDP continuity condition 
  on $\psd{D_{\ell}} \times \psd{D_{\ell+1}}$  
    with  speed $n$ and good  rate function $\alpha_\ell I_\ell(Q_2|Q_1)$, where 
    $I_\ell$ is defined 
    in \eqref{eq:gs_rate}. 
\end{proposition}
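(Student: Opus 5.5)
We verify the three requirements of Definition~\ref{def:ldp_cont_cond} for the kernel $\nu_{\ell+1,n}(\cdot\mid Q_1)$. By definition, $\nu_{\ell+1,n}(\cdot\mid Q_{1,n})$ is the law of
\[
S_n=\frac{1}{C_\ell(n)}\sum_{c=1}^{C_\ell(n)}\CG^{(\ell)}\bigl(\sqrt{Q_{1,n}}Z_c^{(\ell)}\bigr).
\]
So the LDP part is already contained in Lemma~\ref{Lemma:r=2cLDP}, and what remains is goodness and lower semicontinuity of the rate function. The order of work is: condition (2), then (1), then (3).

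\emph{Condition (2).} Fix $Q_1\in\psd{D_\ell}$ and any sequence $Q_{1,n}\to Q_1$ in $\psd{D_\ell}$. Lemma~\ref{Lemma:r=2cLDP} applies directly under \ref{A1} and \ref{A2bis}. It gives that $\nu_{\ell+1,n}(\cdot\mid Q_{1,n})$ satisfies an LDP on $\psd{D_{\ell+1}}$ with speed $n$ and rate function $\alpha_\ell I_\ell(\cdot\mid Q_1)$. The limit rate depends only on $Q_1$, not on the particular approximating sequence, which is exactly what the definition requires.

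\emph{Condition (1).} Goodness of $\alpha_\ell I_\ell(\cdot\mid Q_1)$ follows from the abstract Cramér theorem in the proof of Lemma~\ref{Lemma:r=2cLDP}. It applies because $\mathbf 0\in\overset{\circ}{\CD}_{\ell,Q_1}$ by Lemma~\ref{Lemma_genMom}(a). To make this self-contained, argue directly.
\begin{itemize}
\item $I_\ell(\cdot\mid Q_1)$ is a supremum of continuous affine functions of $Q_2$, hence lower semicontinuous.
\item Take $Q_0=\pm\epsilon E$ with $E$ ranging over a basis of $\sym{D_{\ell+1}}$ and $\epsilon<1/(2A\|Q_1\|_2)$. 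These $Q_0$ lie in $\CD_{\ell,Q_1}$, so $\log M_\ell(Q_0\mid Q_1)\le c<\infty$.
\item Therefore $I_\ell(Q_2\mid Q_1)\ge \epsilon\max_E|\tr(EQ_2)|-c$. The sublevel sets are then bounded.
\end{itemize}
Bounded and closed sublevel sets in the finite-dimensional closed set $\psd{D_{\ell+1}}$ are compact. Multiplying by $\alpha_\ell>0$ preserves goodness.

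\emph{Condition (3).} Joint lower semicontinuity of $(Q_1,Q_2)\mapsto\alpha_\ell I_\ell(Q_2\mid Q_1)$ on $\psd{D_\ell}\times\psd{D_{\ell+1}}$ is exactly Lemma~\ref{lemma:r=2lsc}, restricted to $Q_2\in\psd{D_{\ell+1}}\subset\sym{D_{\ell+1}}$.

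The only point that needs care is the interplay of the state spaces. Lemma~\ref{Lemma:r=2cLDP} is stated on $\psd{D_{\ell+1}}$, obtained from the LDP on the full matrix space by restriction to a closed set. Lemma~\ref{lemma:r=2lsc} is stated with $Q_2$ in $\sym{D_{\ell+1}}$. Both restrict consistently to the Polish space $\psd{D_\ell}\times\psd{D_{\ell+1}}$, so the three conditions hold there.

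For the initial step $\ell=0$, the conditioning variable is the deterministic $\mathbf K^{(1)}$. The same argument applies verbatim, since Lemmas~\ref{Lemma_genMom}--\ref{Lemma:r=2cLDP} hold for every $\ell=0,\dots,L$.

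The main obstacle is already handled upstream: proving the LDP uniformly along perturbed conditioning sequences $Q_{1,n}\to Q_1$, via exponential equivalence, is the content of Lemma~\ref{Lemma:r=2cLDP}. That exponential equivalence rests on the $\eta\|z\|^2$ increment bound of Lemma~\ref{Lemma_bound_lip_onG}. The remaining goodness check is routine once $\mathbf 0$ is interior to $\CD_{\ell,Q_1}$.
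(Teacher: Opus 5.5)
Your proposal is correct and matches the paper's proof: conditions (1)--(2) follow from Lemma~\ref{Lemma:r=2cLDP} and condition (3) from Lemma~\ref{lemma:r=2lsc}. Your direct sublevel-set argument for goodness, using $\pm\epsilon E$ inside the ball of Lemma~\ref{Lemma_genMom}(a), is a correct, self-contained supplement to the Cram\'er-based goodness. One small slip: the kernel that conditions on the deterministic $\mathbf{K}^{(1)}$ is $\nu_{2,n}$, i.e.\ $\ell=1$, not $\ell=0$; for $\ell=0$, Assumption~\ref{A1} keeps $C_0$ fixed, so $\alpha_0$ is undefined and Lemma~\ref{Lemma:r=2cLDP} does not apply. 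The meaningful range is therefore $\ell=1,\dots,L$, a looseness already present in the paper's own statement.
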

\begin{proof} 
Points   (1) and (2) of Definition \ref{def:ldp_cont_cond} follow by 
Lemma \ref{Lemma:r=2cLDP}. Point (3) by 
 Lemma \ref{lemma:r=2lsc}.  
\end{proof}

Upgrading the weak LDP in Proposition \ref{thm:joint_LDP_general} to a full LDP is not always straightforward, as there exist sequences of measures satisfying a weak but not a full LDP (see \cite{dembo98}). 
Nevertheless, if $\{\mu_n\}_n$ satisfies a WLDP with rate function $I$ and is exponentially tight, then it obeys a full LDP with good rate function $I$ (see Theorem 2.3(iii) in \cite{rezakhanlou2017}).  

The next section establishes exponential tightness for the sequence of random covariance matrices.

\subsection{Exponential Tightness}
We start with a simple preliminary result. 

\begin{lemma}\label{lemmatight}
 Let Assumptions {\rm \ref{A1}} and {\rm\ref{A2bis}} hold.  Let $Z_k\iid\CN(\mathbf{0},\id_D)$
and let $S \subset \psd{D}$ be compact, then  for every $\epsilon>0$, 
there is $R=R(D_\ell,S,\CG^{(\ell)},\eps)$ such that 
 \[
\sup_{Q \in S}    \PP\left( \left\| \frac{1}{C_\ell(n)} \sum_{k = 1}^{C_\ell(n)} \CG^{(\ell)} \left(\sqrt{Q} Z_{k}\right) \right\| _F>R \right) 
    \leq e^{-n \eps}
 \]
\end{lemma}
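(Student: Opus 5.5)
We will use a Chernoff bound together with the quadratic growth bound \eqref{poly2G}, which follows from \ref{A2bis}. The first step is to reduce to a scalar sum. For every $Q \in S$ set $M:=\sup_{Q\in S}\|Q\|_2<\infty$ (finite by compactness). By the triangle inequality for $\|\cdot\|_F$, \eqref{poly2G} and Remark \ref{matrixnorm}(a)--(b),
\[
\Bigl\| \frac{1}{C_\ell(n)} \sum_{k=1}^{C_\ell(n)} \CG^{(\ell)}\bigl(\sqrt{Q}Z_k\bigr)\Bigr\|_F
\le \frac{A}{C_\ell(n)}\sum_{k=1}^{C_\ell(n)} \bigl(1+ M\|Z_k\|_2^2\bigr).
\]
The right-hand side no longer depends on $Q$, so the supremum over $S$ is harmless. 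It suffices to bound $\PP\bigl(\frac{1}{C}\sum_{k=1}^{C}\|Z_k\|_2^2 > R'\bigr)$ with $C=C_\ell(n)$ and $R'=(R/A-1)/M$. If $M=0$, the left-hand side is at most $A$ deterministically, and any $R>A$ works.

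The second step is an exponential Markov inequality with a fixed $t\in(0,1/2)$, for instance $t=1/4$. The variable $\|Z_k\|_2^2$ is $\chi^2_D$, so $\E[e^{t\|Z_1\|_2^2}]=(1-2t)^{-D/2}<\infty$. This gives
\[
\PP\Bigl(\sum_{k=1}^{C}\|Z_k\|_2^2 > C R'\Bigr)\le \exp\Bigl(-C\bigl(tR' - \tfrac{D}{2}\log\tfrac{1}{1-2t}\bigr)\Bigr).
\]

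The third step converts this from speed $C_\ell(n)$ to speed $n$. By \ref{A1}, $C_\ell(n)\ge \alpha_\ell n/2$ for $n$ large. We then choose $R'$ so large that $\frac{\alpha_\ell}{2}\bigl(tR'-\frac D2\log\frac1{1-2t}\bigr)\ge\eps$, which makes the bound at most $e^{-n\eps}$ for all large $n$.

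The only mildly delicate point is covering the finitely many small $n$ as well, if the statement is meant for every $n$. Since $C_\ell(n)\ge1$, the bound $\exp(-C_\ell(n)\,c(R'))$ with $c(R')>0$ is at most $\exp(-c(R'))$. For each of the finitely many remaining $n$, we can therefore enlarge $R'$ until $c(R')\ge n\eps$; this keeps $R$ finite and depending only on $D_\ell$, $S$ (through $M$), $\CG^{(\ell)}$ (through $A$) and $\eps$. I do not expect any real obstacle: the whole argument is Cramér-type Chernoff bounding, made uniform through the $Q$-free domination in the first step.
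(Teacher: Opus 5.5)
Your proposal is correct and follows essentially the paper's argument: an exponential Markov (Chernoff) bound, made uniform over $Q\in S$ by dominating $\|\CG^{(\ell)}(\sqrt{Q}Z_k)\|_F$ with $A(1+M\|Z_k\|_2^2)$ through the quadratic growth bound \eqref{poly2G}. The only differences are minor. You apply this domination pathwise before the Chernoff step, and you treat the passage from $C_\ell(n)$ to $n$ (including small $n$) explicitly, whereas the paper simply assumes $C_\ell(n)=n$.
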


\proof
Without loss of generality, assume that $C_\ell(n)=n$.
Assume that $Q \in S$, Markov inequality gives
\begin{align*}
    \PP\left( \left\| \frac{1}{n} \sum_{k = 1}^n \CG^{(\ell)} \left(\sqrt{Q} Z_{k}\right) \right\| _F>R \right) 
    &\le
    \exp(- n t  R) 
    \E\left[ \exp\left( 
    t \left\| \sum_{k = 1}^n \CG^{(\ell)}\left(\sqrt{Q}Z_{k}\right) \right\|_F
    \right)\right]
\end{align*}
for every $t>0$ such that the expectation is finite. 
Now by the triangle inequality we get 
\begin{align*}
    \E\left[ \exp\left( 
    t \left\| \sum_{k = 1}^n \CG^{(\ell)}  \left(\sqrt{Q}Z_{k}\right) \right\|_F
    \right)\right]
    &\le
    \E\left[ \exp\left( 
    t \sum_{k = 1}^n \left\| \CG^{(\ell)}  \left(\sqrt{Q}Z_{k} \right) \right\|_F
    \right)\right]
    \\
    &=
   \E\left[ 
    \exp\left( 
    t  \left\| \CG^{(\ell)}  \left(\sqrt{Q}Z_1 \right) \right\|_F
    \right) \right]^n
\end{align*}
since $Z_{1},\ldots,Z_{n}$ are independent and identically distributed. 
Let $M > 0$ such that $\|Q\|_2 \le M$ for all $Q \in S$. 
Using once again  Lemma \ref{ineqNormaG}
and $\|\cdot\|_F \leq D \|\cdot\|_2$,  one gets
\begin{align*}
\sup_{  Q \in S}   \E\left[ 
    \exp\left( 
    t  \left\| \CG^{(\ell)} \left(\sqrt{Q}Z_1 \right) \right\|_F
    \right) \right]     \le
    \E\left[ 
    \exp\left( 
    t  AD_\ell \left( 1 + M \left\|Z_1\right\|^{2} \right)
    \right) \right]<+\infty
\end{align*}
for $t$ small enough since $Z$ is Gaussian. 
Hence
\begin{align*}
&\sup_{  Q \in S}    \PP\left( \left\| \frac{1}{n}  \sum_{k = 1}^n \CG^{(\ell)} \left(\sqrt{Q} Z_{k}\right) \right\| _F>R \right) 
 \\& 
\leq 
    \exp\left( 
    -n t R + n\log 
    \E\left[ \exp\left( 
    t AD_\ell  \left( 1 + M \left\|Z_1\right\|^{2} \right)
    \right) \right]
    \right) 
   \le
    e^{-n\eps}
\end{align*}
by taking $R$ big enough to have 
$    -t R + \log 
    \E\left[ \exp\left( 
    t D_\ell  A \left( 1 + M \left\|Z_1\right\|^{2} \right)
    \right) \right] \le -\eps
$.
\endproof

We now show that the entire sequence of random matrices in Definition \ref{def:general_structure} is exponentially tight.
\begin{proposition}[Exponential tightness]
\label{prop:exptight-K}
    Let $\{ (\mathbf{K}^{(2,n)}, \ldots, \mathbf{K}^{(L+1,n)})\}_n$ be as in {\rm Definition \ref{def:general_structure}}.
       Let {\rm Assumptions  \ref{A1}} and {\rm\ref{A2bis}}  hold. 
    Then the aforementioned sequence is exponentially tight as a joint collection of matrices, i.e. for every $\eps > 0$ there exist compact sets $S_2,\ldots, S_{L+1} \subset \CS_D^+$ such that
    \begin{equation*}
        \label{eq:exp_tight_kernels}
        \limsup_{n \to \infty} \frac{1}{n} \log 
        \PP\left( \left( \mathbf{K}^{(2,n)}, \ldots, \mathbf{K}^{(L+1,n)}\right) \notin 
        S_2 \times \cdots \times S_{L+1}\right)
        \le - \eps
    \end{equation*}
\end{proposition}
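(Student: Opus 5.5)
The plan is to build the compact sets inductively along the Markov chain, using Lemma~\ref{lemmatight} at each step together with a union bound. Fix $\eps>0$. Since $\mathbf{K}^{(1,n)}=\mathbf{K}^{(1)}$ is deterministic, set $S_1=\{\mathbf{K}^{(1)}\}$. Suppose compact sets $S_1,\dots,S_\ell$ have already been chosen, with $S_j$ a closed Frobenius ball $\{Q\in\psd{D_j}:\|Q\|_F\le R_j\}$ for $j\ge 2$. Apply Lemma~\ref{lemmatight} with $S=S_\ell$, which is compact because $\psd{D_\ell}$ is closed, and with the given $\eps$. This produces $R_{\ell+1}$ such that
\[
\sup_{Q\in S_\ell}\nu_{\ell+1,n}\bigl(\{\|Q'\|_F>R_{\ell+1}\}\mid Q\bigr)\le e^{-n\eps}.
\]
Then take $S_{\ell+1}=\{Q\in\psd{D_{\ell+1}}:\|Q\|_F\le R_{\ell+1}\}$, which is again compact.

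Next decompose the complement of the product set into first exit times:
\[
\bigl\{(\mathbf{K}^{(2,n)},\dots,\mathbf{K}^{(L+1,n)})\notin S_2\times\cdots\times S_{L+1}\bigr\}
=\bigcup_{\ell=1}^{L}\bigl\{\mathbf{K}^{(j,n)}\in S_j,\ j\le \ell,\ \mathbf{K}^{(\ell+1,n)}\notin S_{\ell+1}\bigr\}.
\]
By the Markov property (Definition~\ref{def:general_structure}), the probability of the $\ell$-th event is
\[
\E\Bigl[\mathbf 1\{\mathbf{K}^{(j,n)}\in S_j,\ j\le\ell\}\,\nu_{\ell+1,n}\bigl(S_{\ell+1}^c\mid \mathbf{K}^{(\ell,n)}\bigr)\Bigr]\le \sup_{Q\in S_\ell}\nu_{\ell+1,n}(S_{\ell+1}^c\mid Q)\le e^{-n\eps}.
\]
Summing over $\ell$ gives a total probability of at most $L e^{-n\eps}$. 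Taking $\frac1n\log$ and the $\limsup$ then yields $\le-\eps$, because $\frac{\log L}{n}\to 0$.

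The only substantive ingredient is the uniformity in $Q\in S_\ell$ in Lemma~\ref{lemmatight}. This is exactly why the sets must be constructed sequentially: the bound $R_{\ell+1}$ depends on the compact set $S_\ell$ chosen at the previous step. A first-order concern is that Lemma~\ref{lemmatight} is phrased with $C_\ell(n)$ versus $n$. Its proof replaces $C_\ell(n)$ by $n$, and under \ref{A1} the correct bound is $e^{-C_\ell(n)\eps'}\le e^{-n\alpha_\ell\eps'/2}$ for large $n$. So I would apply that lemma with $\eps'=2\eps/\alpha_\ell$ to recover speed $n$. With that adjustment, the remaining steps are routine.
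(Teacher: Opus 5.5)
Your proposal is correct and follows essentially the same route as the paper: you choose the Frobenius balls $S_{\ell+1}$ sequentially via Lemma~\ref{lemmatight}, applied uniformly over the previously chosen compact $S_\ell$, then decompose the complement of the product set and bound each piece by the Markov property and a supremum over $S_\ell$, which gives a total of at most $L e^{-n\eps}$. Your first-exit decomposition is a slightly tidier, disjoint version of the paper's inductive inclusion, and your explicit rescaling $\eps'=2\eps/\alpha_\ell$ makes rigorous the paper's ``without loss of generality $C_\ell(n)=n$'' step.
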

\proof
Let $\nu_{2,n},\ldots,\nu_{L+1,n}$ be the sequence of kernels as in Definition \ref{def:general_structure}. The proof proceeds by induction, starting from the base case $\left(\mathbf{K}^{(2,n)}, \mathbf{K}^{(3,n)} \right)$. 
Since $ \mathbf{K}^{(1)}$ is deterministic and fixed, we can take $S=\{Q =  \mathbf{K}^{(1)}\}$ to conclude that the sequence $ \mathbf{K}^{(2,n)}$ is exponentially tight
by Lemma \ref{lemmatight}. 
Then, for any fixed $\eps > 0$ there exist a compact set $S_2 \subset \CS_D^+$ (a suitable closed ball $S_2=\{Q:\|Q\|_F\leq R_2\}$ derived from Lemma \ref{lemmatight})  
such that 
$\PP\left(  \mathbf{K}^{(2,n)} \notin S_2  \right) \le 
    e^{-n\eps}$.
Define the set $S_3$ as the closed ball with radius $R_3$ in the space of symmetric and positive semi-definite matrices -  this is a compact set. Using the  fact that the complement of 
$\{( \mathbf{K}^{(2,n)},  \mathbf{K}^{(3,n)} ) \in S_2 \times S_3 \}$ is contained in $\{ \mathbf{K}^{(2,n)} \notin S_2\} \cup \{ \mathbf{K}^{(3,n)} \notin S_3,  \mathbf{K}^{(2,n)} \in S_2\}$, we obtain
\begin{equation} \label{eq:cartesian_complement_decomposition}
    \PP\left( \left( \mathbf{K}^{(2,n)},  \mathbf{K}^{(3,n)}\right) \notin S_2 \times S_3\right) 
    \le 
    \PP\left(  \mathbf{K}^{(2,n)} \notin S_2 \right) + 
    \PP\left(  \mathbf{K}^{(3,n)} \notin S_3,  \mathbf{K}^{(2,n)} \in S_2 \right) 
\end{equation}
At this point the supremum bound for integrals gives
\begin{align*} \label{eq:}
    \PP\left(  \mathbf{K}^{(3,n)} \notin S_3 ,  \mathbf{K}^{(2,n)} \in S_2 \right) 
    & = \int_{S_2} \PP\left(  \mathbf{K}^{(3,n)} \notin S_3 \mid  \mathbf{K}^{(2,n)} = Q \right) \PP\left( \mathbf{K}^{(2,n)} \in  dQ \right)
    \\
    &\le 
    \sup_{Q\in S_2}
    \PP\left(  \mathbf{K}^{(3,n)} \notin S_3 \mid  \mathbf{K}^{(2,n)} = Q \right),
\end{align*}
where 
\begin{align*}
    \sup_{Q\in S_2}
    \PP\left(  \mathbf{K}^{(3,n)}  \notin S_3|  \mathbf{K}^{(2,n)} = Q \right) =
    \sup_{Q\in S_2}
    \PP\left( \left\| \sum_{k=1}^n \CG^{(2)}\left( \sqrt{Q}Z_{k} \right) \right\|_F > nR_3   \right)
\end{align*}
with $Z_{1},\ldots, Z_{n} \stackrel{\text{iid}}{\sim} \CN\left(0, \id_{ D_2}\right)$.
Hence, by Lemma  \ref{lemmatight}, there is $R_3$ such that 
\begin{equation*}
    \PP\left(  \mathbf{K}^{(3,n)} \notin S_3 ,  \mathbf{K}^{(2,n)} \in S_2 \right)
    \leq    \sup_{Q\in S_2}
    \PP\left( \left\| \sum_{k=1}^n \CG^{(2)}\left( \sqrt{Q}Z_{k} \right) \right\|_F > nR_3   \right)
\leq    e^{-n\eps}.
\end{equation*}
By the arbitrariness of $\eps$, the base case is proved with
$S_3=\{Q:\|Q\|_F\leq R_3\}$. 
Now suppose for some $\ell = 3,\ldots,L$ the sequence $\left(  \mathbf{K}^{(2,n)}, \ldots,  \mathbf{K}^{(\ell,n)}\right)$ is jointly exponentially tight.
Then for a fixed $\eps>0$ there exist compact sets $S_2,\ldots,S_\ell \subset\CS_D^+$ such that 
\begin{align*}
    \PP\left( \left( \mathbf{K}^{(2,n)},\ldots,  \mathbf{K}^{(\ell,n)}\right) \notin S_2 \times \cdots \times S_{\ell}\right) \le (\ell-1)
    e^{-n\eps}
\end{align*}
Define the set $S_{\ell+1}$ as the closed ball with radius $R_{\ell+1}$ in the symmetric and positive semi-definite matrices. 
Arguing as in  \eqref{eq:cartesian_complement_decomposition} one gets
\begin{align*}
    &\PP\left( ( \mathbf{K}^{(2,n)}, \ldots,  \mathbf{K}^{(\ell+1,n)}) 
        \notin S_2 \times \cdots \times S_{\ell+1} \right) \\
    &\le
    \PP\left( ( \mathbf{K}^{(2,n)}, \ldots,  \mathbf{K}^{(\ell,n)}) 
        \notin S_2 \times \cdots \times S_{\ell} \right) 
    +
    \PP\left(  \mathbf{K}^{(\ell+1,n)} \notin S_{\ell+1}
    ,  \mathbf{K}^{(\ell,n)} \in S_\ell \right).
\end{align*}
Following exactly the same reasoning made for the base case we can show that
\[
\PP\left( \mathbf{K}^{(\ell+1,n)} \notin S_{\ell+1} \mid  \mathbf{K}^{(\ell,n)} \in S_\ell \right) \le e^{-n\eps}
\]
for a suitably big  $R_{\ell+1}$. 
\endproof

\subsection{Proof of Theorem \ref{prop:gs_ldp0}}
We now have  all the tools to complete the  proof of the LDP. 

\begin{proposition}[LDP for the general structure] 
\label{prop:gs_ldp} $\! \!$  Let $\{\! (\mathbf{K}^{(2,n)},\! \ldots, \mathbf{K}^{(L+1,n)})\!\}_n$ be as in {\rm Definition \ref{def:general_structure}}.
       Let {\rm Assumptions \ref{A1}} and {\rm\ref{A2bis} hold}. Let $I_\ell$ be as in {\rm Proposition \ref{prop:ldp-continuity}}.
    Then $\left\{\left( \mathbf{K}^{(2,n)}, \ldots,  \mathbf{K}^{(L+1,n)}\right)\right\}_n$
    satisfies a LDP on $\psd{D_2}\times \dots \times \psd{D_{L+1}}$ with good rate function given by
    \begin{equation*}
        I_{2,\dots,L+1}\left(Q_2, \ldots, Q_{L+1}\right) 
        = 
        \alpha_1 I_{1}\left(Q_{2} \mid  \mathbf{K}^{(1)}\right)
        +
        \sum\limits_{\ell=2}^L
        \alpha_\ell I_{\ell}\left(Q_{\ell+1} \mid Q_{\ell}\right).
    \end{equation*}
    
\end{proposition}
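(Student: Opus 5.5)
We argue by induction on the number of layers, chaining the conditional LDP of Proposition~\ref{thm:joint_LDP_general} and upgrading each weak LDP to a full one via exponential tightness.

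\emph{Base case.} Since $\mathbf{K}^{(1)}$ is deterministic, $\mathbf{K}^{(2,n)}$ has law $\nu_{2,n}(\cdot\mid \mathbf{K}^{(1)})$. Lemma~\ref{Lemma:r=2cLDP}, applied with the constant sequence $Q_{1,n}=\mathbf{K}^{(1)}$, gives a full LDP on $\psd{D_2}$ with good rate function $\alpha_1 I_1(\cdot\mid\mathbf{K}^{(1)})$.

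\emph{Inductive step.} Assume that $X_n^{(\ell)}:=(\mathbf{K}^{(2,n)},\dots,\mathbf{K}^{(\ell,n)})$ satisfies a full LDP on $\XX_1:=\psd{D_2}\times\dots\times\psd{D_\ell}$ with good rate function $I_{2,\dots,\ell}$. Each $\psd{D}$ is a closed subset of a Euclidean space, so $\XX_1$ and $\XX_2:=\psd{D_{\ell+1}}$ are Polish. By the Markov property in Definition~\ref{def:general_structure}, the joint law of $(X_n^{(\ell)},\mathbf{K}^{(\ell+1,n)})$ has the form
\[
\mu_n(A\times B)=\int_A \nu_{\ell+1,n}(B\mid x_\ell)\,\PP(X_n^{(\ell)}\in dx),
\]
where $x_\ell$ denotes the last coordinate of $x$. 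Set $\tilde\nu_n(B\mid x):=\nu_{\ell+1,n}(B\mid x_\ell)$. This kernel satisfies the conditional LDP continuity condition of Definition~\ref{def:ldp_cont_cond}, with rate function $\alpha_\ell I_\ell(Q_{\ell+1}\mid x_\ell)$:
\begin{itemize}
\item if $x_n\to x$ in $\XX_1$, then the last coordinates converge, so Proposition~\ref{prop:ldp-continuity} yields goodness of the rate function and the LDP along $x_n$;
\item lower semicontinuity of $(x,Q_{\ell+1})\mapsto I_\ell(Q_{\ell+1}\mid x_\ell)$ follows from Lemma~\ref{lemma:r=2lsc}, since it is the composition of a lower semicontinuous function with the continuous coordinate projection.
\end{itemize}
Proposition~\ref{thm:joint_LDP_general} therefore gives a weak LDP for $X_n^{(\ell+1)}$ with rate function
\[
I_{2,\dots,\ell}+\alpha_\ell I_\ell(Q_{\ell+1}\mid Q_\ell)=I_{2,\dots,\ell+1}.
\]

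\emph{Upgrade to a full LDP.} Proposition~\ref{prop:exptight-K}, applied to the first $\ell$ matrices, shows that $X_n^{(\ell+1)}$ is exponentially tight. By Theorem~2.3(iii) of \cite{rezakhanlou2017}, the weak LDP then becomes a full LDP with good rate function $I_{2,\dots,\ell+1}$. This closes the induction, and taking $\ell=L$ gives the statement.

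\emph{Main obstacle.} The delicate point is checking that the augmented kernel $\tilde\nu_n$, which depends on the whole past only through its last coordinate, really meets all three requirements of Definition~\ref{def:ldp_cont_cond} uniformly along arbitrary convergent sequences in the product space. The LDP along sequences $Q_{1,n}\to Q_1$ rests on the exponential equivalence argument of Lemma~\ref{Lemma:r=2cLDP}. Goodness of $I_\ell(\cdot\mid Q_1)$ comes from Cramér's theorem, because Lemma~\ref{Lemma_genMom}(a) puts $\mathbf 0$ in the interior of the domain.

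Goodness of the final rate function is automatic once exponential tightness and the full LDP are in hand. Alternatively, it can be checked directly: on a sublevel set each $Q_\ell$ stays bounded, because the base rate is good, and inductively each $I_\ell(\cdot\mid Q_\ell)$ is good and lower semicontinuous jointly, so the sublevel sets are closed and bounded.

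Once the continuity condition is verified, the remaining steps are bookkeeping.
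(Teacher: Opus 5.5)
Your proposal is correct and follows essentially the same route as the paper. Both arguments proceed by induction on the layer. Each step uses Proposition~\ref{thm:joint_LDP_general} together with the continuity condition of Proposition~\ref{prop:ldp-continuity} to get a weak LDP. The weak LDP is then upgraded to a full LDP with good rate function through the exponential tightness of Proposition~\ref{prop:exptight-K} and Theorem~2.3(iii) of \cite{rezakhanlou2017}. Your explicit lifting of $\nu_{\ell+1,n}$ to a kernel on the whole past $\psd{D_2}\times\dots\times\psd{D_\ell}$ spells out a step the paper only invokes via the Markov property.

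The one thing to discard is the ``alternative'' direct check of goodness. Pointwise goodness of $I_\ell(\cdot\mid Q_\ell)$ and joint lower semicontinuity do not by themselves keep $Q_{\ell+1}$ bounded uniformly as $Q_\ell$ ranges over a bounded set; that would need a uniform exponential-moment bound as in Lemma~\ref{lemmatight}. This aside is not used, since goodness already follows from exponential tightness.
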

\proof
The proof proceeds by induction, starting from  $( \mathbf{K}^{(2,n)},  \mathbf{K}^{(3,n)})$. 
Since $ \mathbf{K}^{(1)}$ is deterministic, the conditional LDP on $ \mathbf{K}^{(2,n)}$ established in Proposition \ref{prop:ldp-continuity} is actually a full LDP
with good rate function $ \alpha_1 I_{1}\left(Q_{2} \mid  \mathbf{K}^{(1)}\right)$.
By Proposition \ref{prop:ldp-continuity} the transition kernel  $\nu_{3,n}$
satisfy the conditional LDP continuity condition with rate function
$\alpha_2 I_{2}$. Hence, Proposition \ref{thm:joint_LDP_general}  implies that the measure 
$\mu_{3,n}$
defined as
\begin{equation*}
    \mu_{3,n}(B_2 \times B_3) 
    :=  
    \int_{B_2} \nu_{3,n}(B_3 \mid Q)
    \PP\left(   \mathbf{K}^{(2,n)} \in dQ \right)
    =
    \PP\left(   \mathbf{K}^{(2,n)} \in B_2,  \mathbf{K}^{(3,n)} \in B_3 \right),
\end{equation*}
follows a weak LDP with rate function 
\begin{equation*}
\mathcal{I}(Q_2,Q_3)= \alpha_1 I_{1}\left( Q_{2} \mid  \mathbf{K}^{(1)}\right)
    +\alpha_2 I_{2}\left( Q_{3} \mid  Q_{2 }\right) 
   =: I_{2,3}(Q_2,Q_3).
\end{equation*}
However, since we proved in Proposition \ref{prop:exptight-K} that $\left\{\left( \mathbf{K}^{(2,n)}, \mathbf{K}^{(3,n)} \right)\right\}_n$ is exponentially tight,  
we deduce that $I_{2,3}$ must be good and that the sequence must satisfy the full LDP by  Theorem 2.3(iii) in \cite{rezakhanlou2017}.
Now suppose that for a fixed $\ell = 3,\ldots, L$ the sequence 
$\left\{ \left( \mathbf{K}^{(2,n)},\ldots, \mathbf{K}^{(\ell,n)}\right)\right\}_n$ satisfies a LDP with good rate function
\begin{equation*}
        I_{2,\ldots,\ell}\left(Q_{2}, \ldots, Q_{\ell}\right) 
        = \alpha_1 I_{1}\left(Q_{2} \mid \mathbf{K}^{(1)}\right) + \sum_{k=2}^{\ell-1}\alpha_k
        I_{k}\left(Q_{k+1} \mid Q_k\right).
\end{equation*}
By Proposition \ref{prop:ldp-continuity} the transition kernel  $\nu_{\ell+1,n}:\CB(\psd{D_{\ell+1}})\times \psd{D_\ell} \to[0,1]$ 
satisfy the conditional LDP continuity condition with rate function
$\alpha_\ell I_{\ell}$. 
By Markov property one has
\begin{align*}
    \nu_{\ell+1,n}(B \mid Q_\ell)
    = 
    \PP\left(   \mathbf{K}^{(\ell+1,n)} \in B 
    \mid 
     \mathbf{K}^{(2,n)} = Q_2, \ldots,  \mathbf{K}^{(\ell,n)} = Q_\ell \right).
\end{align*}
Hence Proposition \ref{thm:joint_LDP_general} implies that
$\{\mathbf{K}^{(2,n)}, \ldots,  \mathbf{K}^{(\ell+1,n)}\}_n$
follows a weak LDP with rate function 
\begin{equation*}
    I_{2,\ldots,\ell+1}\left(Q_{2}, \ldots, Q_{\ell+1}\right) 
    = \alpha_1 I_{1}\left(Q_{2} \mid  \mathbf{K}^{(1)}\right) + \sum_{k=2}^{\ell}
    \alpha_k I_{k}\left(Q_{k+1} \mid Q_k\right)
    .
\end{equation*}
Similarly to what we have done for the base case, the exponential tightness of \newline 
$\left\{\left(  \mathbf{K}^{(2,n)}, \ldots,  \mathbf{K}^{(\ell+1,n)}\right)\right\}_n$ implies that $I_{2,\ldots,\ell+1}$ must be good and that the sequence satisfies the full LDP.
\endproof

In light of Lemma \ref{lemma:fromCNNtoGeneral}, Theorem \ref{prop:gs_ldp0}
follows from Proposition \ref{prop:gs_ldp}.

\section{Conclusions}

In this work we have developed a large deviation theory for convolutional neural networks in the infinite-channel regime. 
Our main result establishes a large deviation principle for the sequence of conditional covariance tensors associated with a broad class of CNN architectures, including multidimensional models with general receptive fields encoded through a patch--extractor formalism.

This extends the large deviation framework previously available for fully connected neural networks to the convolutional setting, where spatial structure and weight sharing introduce additional challenges. 
Despite these differences, the covariance tensor exhibits a tractable asymptotic behavior and admits an explicit variational characterization through the rate function.

As a byproduct, our analysis also recovers covariance concentration and Gaussian equivalence in a unified and streamlined way. 
We further establish large deviation principles under both the prior and posterior distributions, as well as for a suitably rescaled network output. 
In particular, the invariance of the rate function under the posterior highlights the ``lazy'' nature of the infinite--channel regime.

From a broader perspective, our results contribute to the probabilistic understanding of neural networks beyond Gaussian limits, providing a quantitative characterization of rare events and of atypical fluctuations via the large deviation principle. 

Several directions for future research remain open, including extending the analysis to non-Gaussian weight distributions under minimal assumptions, as well as investigating more complex asymptotic regimes in which additional parameters, such as network depth or sample size, also diverge. This line of research could further be broadened to encompass other interesting architectures, such as recurrent neural networks and transformers.


These results constitute a step toward a unified large deviation framework for modern neural network architectures.
\appendix

\section{Additional proofs}\label{AppendixA}

  \begin{proof}[Proof of lemma \ref{lemma:fromCNNtoGeneral}]
We reason component-wise and start from the first claim. 
First note that using \ref{A2_0} one has 
\begin{equation*}
\begin{split}
        \left| T^{(i,\ell)}_m \left(z_{:,\nu}\right) \right| 
    &\le
     A_T e^{B_T\|z_{:,\nu}\|_2^{r}} 
    \leq
    A_T e^{B_T \|z\|_2^{r}} .\\
    \end{split} 
\end{equation*}
 Recall that here we use the tensor notation 
but we see $z$ as a vector, so that both $\|z_{:,\nu}\|_2$ and 
$\|z\|_2$ need to be understood as vectors norm. In other words, $z_{:,\nu}$ should be considered as sub-vectors of $z$.
Hence for any $z \in \R^{N_\ell P}$, the component $G_{i,\mu,j,\nu}^{(\ell)}(z)$ can be bounded as follows:
\begin{align}
\label{eq:cnn_polybound}
    |G_{i,\mu,j,\nu}^{(\ell)}(z)| 
    &= \frac{1}{\lambda_\ell M_\ell}
    \left| 
    \sum_{m \in \CM_\ell} 
    T^{(i,\ell)}_m \left(z_{:,\mu} \right)
    T^{(j,\ell)}_m \left(z_{:,\nu} \right)
    \right| \nonumber\\
     & \leq \frac{A_T^2}{\lambda_\ell M_\ell}
\sum_{m \in \CM_\ell}
   e^{2B_T \|  z \|_2^{r}} \le \frac{ A^2_T}{\lambda_\ell} 
   e^{2B_T \|  z \|_2^{r}}, \nonumber
\end{align}
which gives 
\[
\|G^{(\ell)}(z)\|_F
= \left ( \sum_{i,j,\mu,\nu} |G_{i,\mu,j,\nu}^{(\ell)}(z)|^2 
\right)^{\frac{1}{2}} \leq 
 \frac{A^2_T N_\ell P}{\lambda_\ell} 
   e^{2B_T \|  z \|_2^{r}}.
\] 
This shows that  $G^{(\ell)}$ satisfies \ref{A2}.

In order to prove part (2), write
\begin{equation*}
    \begin{split}
    |G_{i,\mu,j,\nu}^{(\ell)}(z)& -G_{i,\mu,j,\nu}^{(\ell)}(z')| \\
& \leq  \frac{1}{\lambda_\ell M_\ell} \sum_{m \in \CM_\ell} \left| 
    T^{(i,\ell)}_m \left(z_{:,\mu}\right)    T^{(j,\ell)}_m \left(z_{:,\nu}\right)
    - T^{(i,\ell)}_m\left(z_{:,\mu}'\right)    T^{(j,\ell)}_m \left(z_{:,\nu}'\right)
    \right| \\
&
\leq  \frac{1}{\lambda_\ell M_\ell} \sum_{m \in \CM_\ell} \left| 
    T^{(i,\ell)}_m \left(z_{:,\mu}\right) - T^{(i,\ell)}_m \left(z_{:,\mu}'\right)  \right|  \left| T^{(j,\ell)}_m \left(z_{:,\nu}\right) \right| 
    \\
&+ \frac{1}{\lambda_\ell M_\ell} \sum_{m \in \CM_\ell} \left| 
 T^{(i,\ell)}_m \left(z_{:,\mu}'\right) \right|    \left|  T^{(j,\ell)}_m \left(z_{:,\nu}\right)-T^{(j,\ell)}_m \left(z_{:,\nu}'\right)\right| .
\\
    \end{split}
\end{equation*}
By  \ref{A2_0BIS}, for every $\varepsilon>0$, there exists a constant $C=C(\varepsilon)$ such that, for vectors  $z_1,z_2$,
\begin{equation}\label{eq:sigma_eps_bound}
|T^{(j,\ell)}_m(z_1)-T^{(j,\ell)}_m(z_2)|\le L_T\|z_1-z_2\|_2 + \varepsilon(\|z_1\|_2+\|z_2\|_2) + C.
\end{equation}
In particular, $T^{(j,\ell)}_m$ is linearly bounded; that is, there exists a constant $A>0$ such that
\begin{equation}\label{eq:sigma_linear_bound}
|T^{(j,\ell)}_m(z)| \le A(1+\|z\|_2), \qquad \forall\, z \in\mathbb{R}^{N_\ell}.
\end{equation}
Using the bound \eqref{eq:sigma_eps_bound}, we obtain 
\begin{align*}
\left|T^{(i,\ell)}_m\left(z_{:,\mu}\right) - T^{(i,\ell)}_m\left(z'_{:,\mu}\right)\right|
&\le 
 L_T\|z_{:,\mu}-z_{:,\mu}'\|_2
+
\varepsilon (\|z_{:,\mu}\|_2+\|z_{:,\mu}'\|_2) + C,
\\&\le 
L_T\|z-z'\|_2
+
\varepsilon (\|z\|_2+\|z'\|_2) + C,
\end{align*}
Using the linear bound \eqref{eq:sigma_linear_bound}, we deduce
\begin{align*}
&
\left|T^{(i,\ell)}_m \left(z_{:,\mu}\right)
      - T^{(i,\ell)}_m \left(z'_{:,\mu}\right)\right|
\,
\left|T^{(j,\ell)}_m \left(z_{:,\nu}\right)\right|
\\[4pt]
&\le 
A L_T \|z-z'\|_2(1+\|z\|_2)
+ 
\varepsilon  A (\|z\|_2 + \|z'\|_2 + \|z\|_2\|z'\|_2 + \|z\|_2^2)
+ 
A  C (1+\|z\|_2).
\end{align*}
Using $\|z\|_2\|z'\|_2 \leq \frac{1}{2}(\|z\|_2^2+\|z'\|_2^2)$  and $\|z\|_2\le 1+\|z\|_2^2$
one gets
\begin{equation*}
\begin{split}
& \left|T^{(i,\ell)}_m\left(z_{:,\mu}\right)
       - T^{(i,\ell)}_m\left(z'_{:,\mu}\right)\right|\left|
       T^{(j,\ell)}_m \left(z_{:,\nu}\right)\right| \\
&  \quad  \leq  A L_T \|z-z'\|_2(1+\|z\|_2)
+\varepsilon  A 
\Big [2+\frac{5}{2}\|z\|_2^2+\frac{3}{2}\|z'\|_2^2\Big]+   A   C (1+\|z\|_2). \\
&
 \quad  \leq  A  L_T \|z-z'\|_2\|z\|_2
+\varepsilon A  \Big [2+\frac{5}{2}\|z\|_2^2+\frac{3}{2}\|z'\|_2^2\Big] \\
& +  ( A   C+ A L_T)\|z\|_2+ A L_T\|z'\|_2+  A  C . 
\\
\end{split}
\end{equation*}
The analogous bound holds for the second term
$ |T^{(i,\ell)}_m\left(z_{:,\nu}\right)
      -T^{(i,\ell)}_m\left(z'_{:,\nu}\right)|
|T^{(j,\ell)}_m \left(z_{:,\mu}'\right)|
$ with $z$ in place of $z'$ and $z'$ in place of $z$.
Putting it all together, one gets 
\begin{equation*}
\begin{split}
& \left|G_{i,\mu,j,\nu}^{(\ell)}(z)-G_{i,\mu,j,\nu}^{(\ell)}(z')\right|
\le 
 C_1  \|z-z'\|_2(\|z\|_2+\|z'\|_2) \\
&\qquad  + 
\varepsilon  C_2 (\|z\|_2^2+\|z'\|_2^2)+ C_3(\|z\|_2+\|z'\|_2)+ C_4 \\
\end{split}
\end{equation*}
with $ C_1= A L_T/\lambda_\ell$,
$ C_2=4  A /\lambda_\ell$, $ C_3= A ( C +2 L_T)/\lambda_\ell$ and $ C_4=2( A C+\varepsilon  2 A)/\lambda_\ell$.
Using $ C_3 \|z\|_2\leq \eps\|z\|_2^2+ C_3^2/\eps$
\begin{equation*}
\begin{split}
     \left|G_{i,\mu,j,\nu}^{(\ell)}(z)-G_{i,\mu,j,\nu}^{(\ell)}(z')\right| &
  \leq 
 C_1  \|z-z'\|_2(\|z\|_2+\|z'\|_2)\\&
+ 
\varepsilon ( C_2+1) (\|z\|_2^2+\|z'\|_2^2)+ 2 C_3^2/\varepsilon+ C_4. \\
\end{split}
\end{equation*}
Since $C_2$ does not depend on $\varepsilon$, 
the claim follows. 
\end{proof}


\begin{proof}[Proof of Lemma \ref{lemma:bridge-lln}]
To prove that $Y_n \convp y_0$, it is sufficient to show that $Y_n$ converges in distribution to the constant $y_0$. This, in turn, is equivalent to showing that $\E[\phi(Y_n)] \to \phi(y_0)$ for any bounded and continuous function $\phi: \YY \to \R$ (shortened from now on as $\phi\in C_b(\YY)$).

Let $\phi \in C_b(\YY)$ be arbitrary and set 
\[
g_n(x): = \E[\phi(Y_n) \mid X_n = x]=\int \phi(y) \nu_{n}(dy|x).
\]
Let $\{n_k \}_k\to +\infty$ 
be any diverging  subsequence.
Since $X_n \convp x_0$, the subsequence $X_{n_k}$ also converges in probability to $x_0$. 
We can extract a further diverging subsequence $\{n_{k_j}\}_j$ such that $X_{n_{k_j}} \to x_0$ almost surely as $j \to \infty$. Call $\Omega_{0}$ the almost sure subset of realization such that $X_{n_{k_j}}(\omega) \to x_0$ for all $\omega \in \Omega_{0}$. Then, by the arbitrariness of the sequences in hypothesis (2), for every $\omega\in\Omega_{0}$ as $j\to\infty$ one has
\begin{equation*}
    g_{n_{k_j}}(X_{n_{k_j}}(\omega)) \to \phi(y_0)
\end{equation*}
meaning that $g_{n_{k_j}}(X_{n_{k_j}})$ converges almost surely to $\phi(y_0)$ as $j\to\infty$.
Moreover, the random variables $g_{n_{k_j}}(X_{n_{k_j}})$ are uniformly bounded since 
\begin{equation*}
|g_n(x)| = |\E[\phi(Y_n) \mid X_n = x]| \le \E[|\phi(Y_n)| \mid X_n = x] \le \|\phi\|_{\infty}.
\end{equation*}
Consequently, by the Dominated Convergence Theorem, when $j\to\infty$ we have
\begin{equation*}
\E\left[\phi\left(Y_{n_{k_j}}\right)\right] = \E\left[ \E\left[\phi\left(Y_{n_{k_j}}\right) \mid X_{n_{k_j}}\right] \right] =  \E\left[g_{n_{k_j}}\left(X_{n_{k_j}}\right)\right] \to \phi(y_0).
\end{equation*}
Since we have shown that from an arbitrary subsequence $\{n_k\}_k$ we can extract a further subsequence $\{n_{k_j}\}_j$ such that $\E[\phi(Y_{n_{k_j}})]$ converges to $\phi(y_0)$, we conclude that the entire sequence converges:
\[
\lim_{n\to\infty} \E[\phi(Y_n)] = \phi(y_0).
\]
As this holds for any $\phi \in C_b(\YY)$, we have established that $Y_n \convd y_0$. Since the limit is a constant, this is equivalent to convergence in probability, $Y_n \convp y_0$.
\end{proof}

\begin{proof}[Proof of Lemma \ref{lemma:triangle_lln}]
For notational simplicity, we assume $C_\ell(n)=n$. 
Using Lemma \ref{ineqNormaG} and the fact that $r<2$ one has 
$ \E\left[\|\CG^{(\ell)}\left(\sqrt{Q}Z \right)\|_F\right] \leq  A\E[ e^{B\|Q\|_2^{r/2}  \|Z\|_2^r} ] <+\infty$. Hence, 
    by the Strong Law of Large Numbers, we have
    \begin{equation*}
        \frac{1}{n}\sum_{c=1}^n \CG^{(\ell)}\left(\sqrt{Q}Z_c \right) \xrightarrow{\text{a.s.}} \E\left[\CG^{(\ell)}\left(\sqrt{Q}Z \right)\right].
    \end{equation*}
    We decompose the term of interest as follows:
    \begin{equation*}
        \frac{1}{n}\sum_{c=1}^n \CG^{(\ell)}\left(\sqrt{Q_n}Z_c\right) 
        =
        \frac{1}{n}\sum_{c=1}^n \CG^{(\ell)}\left(\sqrt{Q}Z_c\right)
        + 
        \Delta_n,
    \end{equation*}
    where
$ \Delta_n 
        :=
        \frac{1}{n}\sum_{c=1}^n \left[ \CG^{(\ell)}\left(\sqrt{Q_n}Z_c\right) 
        -
        \CG^{(\ell)}\left(\sqrt{Q}Z_c\right) \right]$.   We will now show that $\Delta_n \stackrel{\PP}{\rightarrow} 0$.  To this end  it is sufficient to prove that $\E\left[\|\Delta_n\|_2 \right] \to 0$. 
Remark \ref{continuitySqrt} and the
   continuity of $\CG^{(\ell)}$ implies $\CG^{(\ell)}\left(\sqrt{Q_n}z\right) \to \CG^{(\ell)}\left(\sqrt{Q}z\right)$ for every $z\in\R^{D_\ell}$, and hence
    \begin{equation*}
        \left\|
        \CG^{(\ell)}\left( \sqrt{Q_n}Z \right) 
        -
        \CG^{(\ell)}\left( \sqrt{Q}Z \right)
        \right\|_2
        \to
        0
        \quad \text{a.s.}.
    \end{equation*}
    Moreover, since 
   the sequence $\{Q_n\}_n$ converges and hence its norm is bounded,  
     by Lemma \ref{ineqNormaG} for some $M$ one has    
  \[
    \left\| \CG^{(\ell)}\left( \sqrt{Q_n} Z \right)  -         \CG^{(\ell)}\left( \sqrt{Q} Z\right) \right\|_2        
\le   2A  e^{ B M^{r/2} \left\|Z \right\|_2^{r} }.
\]
Now   observe that the right-hand side is an integrable random variable since $r<2$ and $Z$ is Gaussian.
    Therefore, by the Dominated Convergence Theorem, we can conclude that
   \[
        \E\left[ \left\| \CG^{(\ell)}\left( \sqrt{Q_n} Z \right) 
        - 
        \CG^{(\ell)}\left( \sqrt{Q} Z \right) \right\|_2 \right] \to 0.
        \]
    Finally, by the triangular inequality and the definition of $\{Z_k\}_k$ we have 
    \begin{align*}
        \E\left[ \left\|\Delta_n \right\|_2 \right] 
        &=
        \E\left[ \left\| \frac{1}{n}\sum_{c=1}^n \left( \CG^{(\ell)}\left( \sqrt{Q_n} Z_c \right) 
        - 
        \CG^{(\ell)}\left( \sqrt{Q} Z_c \right) \right) \right\|_2 \right]
        \\
        &\le
        \frac{1}{n}\sum_{c=1}^n \E\left[ \left\| \CG^{(\ell)}\left( \sqrt{Q_n} Z_c \right) 
        - 
        \CG^{(\ell)}\left( \sqrt{Q} Z_c \right) \right\|_2 \right]
        \\
        &=
        \E\left[ \left\| \CG^{(\ell)}\left( \sqrt{Q_n} Z \right) 
        - 
        \CG^{(\ell)}\left( \sqrt{Q} Z \right) \right\|_2 \right] \to 0.
    \end{align*}
    This shows that $\Delta_n \to 0$ in $L^1$, which implies $\Delta_n \stackrel{\PP}{\rightarrow} 0$. The result now follows.
\end{proof}

\bibliographystyle{abbrv}
\bibliography{biblioCNN}
\end{document}